\documentclass[11pt]{amsart}

\usepackage{graphicx}
\usepackage{mathrsfs}
\usepackage{xcolor}
\usepackage{amssymb}
\usepackage{amsmath}
\usepackage{mathtools}
\usepackage{tikz}
\usetikzlibrary{arrows.meta,calc}
\usepackage{float}
\usepackage{enumitem}
\usepackage[margin=1.4in]{geometry}
\usepackage[hyperindex,breaklinks,colorlinks=true,
  linkcolor=blue!45!black,citecolor=green!35!black,
  urlcolor=blue!55!black]{hyperref}
\usepackage{aliascnt}
\usepackage[nameinlink,capitalise,noabbrev]{cleveref}

\newtheorem{prop}{Proposition}[section]
\newaliascnt{theo}{prop}
\newtheorem{theo}[theo]{Theorem}
\aliascntresetthe{theo}
\newaliascnt{lemm}{prop}
\newtheorem{lemm}[lemm]{Lemma}
\aliascntresetthe{lemm}
\newaliascnt{coro}{prop}
\newtheorem{coro}[coro]{Corollary}
\aliascntresetthe{coro}
\newaliascnt{exam}{prop}

\aliascntresetthe{exam}
\newaliascnt{rema}{prop}
\newtheorem{rema}[rema]{Remark}
\aliascntresetthe{rema}

\theoremstyle{definition}
\newaliascnt{defi}{prop}
\newtheorem{defi}[defi]{Definition}
\aliascntresetthe{defi}

\newtheorem*{theoremA}{Theorem A}
\newtheorem*{theoremB}{Theorem B}
\newtheorem*{theoremC}{Theorem C}
\newtheorem*{theoremD}{Theorem D}
\crefname{prop}{proposition}{propositions}
\crefname{theo}{theorem}{theorems}
\crefname{lemm}{lemma}{lemmas}
\crefname{coro}{corollary}{corollaries}
\crefname{exam}{example}{examples}
\crefname{rema}{remark}{remarks}
\crefname{defi}{definition}{definitions}

\DeclareMathOperator{\Ric}{Ric}
\DeclareMathOperator{\Hess}{Hess}
\DeclareMathOperator{\tr}{tr}
\DeclareMathOperator{\dist}{dist}
\DeclareMathOperator{\Lip}{Lip}
\newcommand{\bH}{\mathbb H}
\newcommand{\bB}{\mathbb B}
\newcommand{\bS}{\mathbb S}
\newcommand{\R}{\mathbb R}
\newcommand{\eps}{\varepsilon}
\newcommand{\md}{\mathrm d}

\title[Scalar--Mean Rigidity for Hyperbolic Balls]
{Scalar--Mean Rigidity for Hyperbolic Balls\\
in Dimensions $2\le n\le 5$  }
\author{Chenghang Lu}
\address{Albert-Ludwigs-Universit\"at Freiburg,
Mathematisches Institut, Ernst-Zermelo-Str.~1,
D-79104 Freiburg, Germany}
\email{chenghang.lu@math.uni-freiburg.de}
\thanks{C.L. is supported by the DFG through
IRTG 3132 (Grant No.~545046569).}

\date{\today}

\begin{document}
\raggedbottom

\begin{abstract}
In this paper, we prove the non-spin hyperbolic scalar--mean rigidity theorem
in dimensions $2\leq n\leq5$, resolving the corresponding low-dimensional
case of a problem of Gromov.  The proof combines the Jang deformation with
capillary $\mu$-bubble dimension reduction.  We also establish weighted
scalar--mean rigidity for Euclidean ball--torus products in total dimension
at most seven and show that its hyperbolic analogue fails.
\end{abstract}

\maketitle
\tableofcontents

\section{Introduction}\label{sec:introduction}

Scalar-curvature rigidity under contracting maps begins with Llarull's
theorem \cite[Theorems~B and C]{Llarull}.  It states,  that a
closed connected spin manifold admitting a nonzero-degree map to the unit
round sphere which is non-increasing on two-vectors cannot have scalar
curvature everywhere larger than that of the sphere; equality is rigid.
Thus a metric on $\bS^n$ which dominates the round metric on two-vectors and
satisfies $R_g\geq n(n-1)$ must be round.  Llarull's proof combines the
index of a twisted Dirac operator with the Bochner formula.  Listing extended
this method to compact symmetric spaces \cite{Listing}. For further generalizations of Llarull's theorem, see
\cite{Baer,CecchiniHankeSchick,LeeTam,LiSuWang,Listing}.\footnote{
For the distance non-increasing version of Llarull's theorem,
Wang--Wang--Xie--Zhu \cite[Theorem~1.1]{WWXZ} recently removed
the spin assumption in all dimensions.}

For manifolds with boundary, scalar curvature in the interior is coupled to
the mean curvature of the boundary.  The basic Euclidean model is the unit
ball, for which
\[
 R_{g_{\mathrm{Euc}}}=0,
 \qquad H_{\bS^{n-1}}=n-1.
\]
The corresponding boundary problem asks  a compact fill-in with
$R_g\geq0$, $H_{\partial M}\geq n-1$, and a nonzero-degree
$1$-Lipschitz boundary map to $\bS^{n-1}$ must be the Euclidean ball.

We use the differential norms introduced by Wang--Wang--Zhu
\cite[Definition~3.1]{WWZ}.  If $A:V\to W$ is a linear map between Euclidean
spaces with singular values
$\sigma_1(A)\geq\cdots\geq\sigma_r(A)\geq0$, set
\begin{equation}\label{eq:differential-norms}
 |A|:=\sigma_1(A),
 \qquad
 |A|_{\tr}:=\sum_{j=1}^r\sigma_j(A).
\end{equation}
Thus, for a map between $(m-1)$-manifolds,
\[
 |\mathrm dF|\leq |\mathrm dF|_{\tr}
 \leq(m-1)|\mathrm dF|.
\]

The following Euclidean scalar--mean rigidity theorems are due to
Wang--Wang--Zhu \cite{WWZ}; for the regularity theory of minimizing
capillary hypersurfaces, see
\cite{TaylorCapillary,DePhilippisMaggi,ChodoshEdelenLiCapillary,
DeMasiEdelenGasparettoLi}.  Wang--Zhang
\cite[Corollary~1.17]{WangZhang} extended the required regularity to ambient
dimension five, so both theorems hold for $2\leq m\leq5$.

\phantomsection\label{thm:wwz-fixed}
\begin{theoremA}[{\cite[Theorem~1.2]{WWZ},
with \cite[Corollary~1.17]{WangZhang}}]
Let $(M^m,g)$, $2\leq m\leq5$, be smooth, compact, connected, and oriented,
with connected nonempty boundary.  Suppose
\[
 R_g\geq0,
 \qquad H_{\partial M}\geq m-1.
\]
If $F:(\partial M,g_{\partial M})\to\bS^{m-1}$ is a smooth nonzero-degree
map satisfying $|\mathrm dF|\leq1$, then $F$ extends to an isometry
\[
 \widetilde F:(M,g)\longrightarrow\overline{\bB}^m.
\]
\end{theoremA}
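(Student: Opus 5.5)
The plan is an induction on $m$ by capillary $\mu$-bubble dimension reduction, in the spirit of Wang--Wang--Zhu. The base case $m=2$ is Gauss--Bonnet. Let $k$ be the Gauss curvature and $\kappa$ the geodesic curvature of $\partial M$. Then
\[
2\pi\chi(M)=\int_M k+\int_{\partial M}\kappa\geq L(\partial M).
\]
Since $F$ has nonzero degree and $|\mathrm dF|\leq1$, we get $L(\partial M)\geq 2\pi|\deg F|\geq 2\pi$. On the other hand $\chi(M)\leq1$, because $M$ is connected with nonempty boundary. So every inequality is an equality: $k\equiv0$, $\kappa\equiv1$, $M$ is a disk, $|\deg F|=1$, and $F$ is a local isometry of the boundary circle onto $\bS^1$. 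A flat disk with geodesic curvature $1$ along its boundary is the unit disk (developing map plus the four-vertex/Schur-type argument), and $F$ extends to the developing isometry.

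For the inductive step, $m\geq3$, I would pull back a height function. Take a coordinate $x_m$ on $\bS^{m-1}\subset\R^m$ and consider the slices $\{x_m=t\}$, which are $(m-2)$-spheres of radius $\sqrt{1-t^2}$. Inside $M$ I would minimize a capillary functional
\[
\mathcal A(\Omega)=\mathcal H^{m-1}(\partial\Omega\cap\mathring M)-\int_{\partial\Omega\cap\partial M}\cos\theta\,\md\mathcal H^{m-1}+\int_\Omega h .
\]
The contact angle $\theta$ along $\partial M$ is prescribed by $F$: it is the angle that the flat slice $\{x_m=t\}\cap\overline{\bB}^m$ makes with the unit sphere. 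The weight $h$ is chosen so that, in the model ball, the horizontal flat disks are critical.

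Regularity of the minimizer $\Sigma$ is available for ambient dimension at most $5$, by \cite{WangZhang}. The second variation with the Schoen--Yau rearrangement then gives, after a conformal change by the first eigenfunction, a metric on $\Sigma^{m-1}$ with $R\geq0$ and boundary mean curvature $\geq (m-2)/r$. This uses $R_g\geq0$, $H_{\partial M}\geq m-1$ and $|\mathrm dF|\leq1$; the last is what controls the angle term along the boundary. I would rescale so that $r=1$ and restrict $F$ to $\partial\Sigma$. Degree considerations, via a linking argument with the slice, show that the restriction $\partial\Sigma\to\bS^{m-2}$ has nonzero degree. The induction hypothesis then makes $\Sigma$ a flat ball and forces equality in every inequality used.

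Equality yields the following: $\Sigma$ is totally geodesic, $\Ric(\nu,\nu)=0$, $R_g=0$ along $\Sigma$, and $H_{\partial M}=m-1$ with $F$ isometric on $\partial\Sigma$. To propagate this, I would use a foliation argument: the minimizers form a family parametrized by $t$ filling $M$, and the Jacobi field is constant. From this, $M$ is flat and $F$ extends as an isometry onto $\overline{\bB}^m$.

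The main obstacle is the choice of $h$ and $\theta$. Both must be chosen so that the capillary second variation closes up into exactly the boundary data needed for the inductive hypothesis, namely $H\geq m-2$ after rescaling. At the same time the functional needs barriers that prevent $\Sigma$ from escaping to degenerate regions such as near the poles $x_m=\pm1$. I would first try $\theta$ equal to the model contact angle and $h$ equal to the mean curvature of the model foliation, and check the inequalities against the comparison model.
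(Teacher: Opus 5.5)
Your base case is fine: it is the $\phi=0$ case of the argument in Lemma~\ref{lem:weighted-ball}. Your functional is also the right one. With $h=0$ and $\cos J=\beta=\langle F,p_-\rangle$, it is the Wang--Wang--Zhu capillary functional that the paper reuses in the proof of Proposition~\ref{prop:no-ends}; the paper itself only quotes Theorem~A.

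The gap is in the inductive step: the slice does not inherit the hypotheses of Theorem~A, so the induction hypothesis you invoke cannot be applied. The boundary $\partial\Sigma$ is not contained in one latitude $F^{-1}\{x_m=t\}$, so there is no constant radius $r$. Restricting $F$ also gives no map to $\bS^{m-2}$; you must compose with the angular projection $P:\bS^{m-1}\setminus\{p_\pm\}\to\bS^{m-2}$. At polar distance $s$, its differential is $(\sin s)^{-1}$ times the projection onto the latitude. What the second variation actually yields is the pointwise inequality
\[
 H_{\partial\Sigma}-\mathcal B
 =\frac{H_{\partial M}-\langle\nabla s,\md F(\bar\nu)\rangle}{\sin J}
 \geq\bigl|\md(P\circ F|_{\partial\Sigma})\bigr|_{\tr}.
\]
Here $\mathcal B$ is the Robin coefficient of the stability form and $\bar\nu$ is the normal of $\partial\Sigma$ in $\partial M$. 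The inequality uses three facts: $\sin J=\sin s$; $H_{\partial M}\geq m-1\geq|\md F|_{\tr}$; and the block inequality $|\md F|_{\tr}\geq|\langle\nabla s,\md F(\bar\nu)\rangle|+\sin s\,|\md(P\circ F|_{\partial\Sigma})|_{\tr}$.

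The first eigenfunction $u$ is in general not constant. The conformal change $u^{4/(m-3)}$ therefore multiplies both sides by the same varying factor $u^{-2/(m-3)}$. Only the conformally covariant condition $H\geq|\md\Phi|_{\tr}$ survives, never the normalized pair $H\geq m-2$, $|\md\Phi|\leq1$. So ``rescale so that $r=1$'' is not available. The statement that actually descends, and on which the induction must run, is Wang--Wang--Zhu's extremality result (their Proposition~3.2): $R\geq0$ and $H>|\md F|_{\tr}$ force $\deg F=0$. For $m=3$ its slice step is Gauss--Bonnet with test function $1$, not the two-dimensional rigidity statement, since there is no conformal Laplacian on a surface.

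Even with the Listing form (Theorem~B) as hypothesis, you could not prove rigidity by pushing equality through the slices. To define $P\circ F$ on $\partial\Sigma$, and to keep $|\cos J|$ away from $1$ as capillary regularity requires, $\partial\Sigma$ must avoid $F^{-1}(p_\pm)$. Wang--Wang--Zhu achieve this in three moves:
\begin{enumerate}
\item perturb $F$ (their Lemma~3.1) to be locally constant near the preimages of two antipodal points;
\item constrain $\Omega$ near those preimages;
\item transfer the degree through the relative intersection formula summed over components.
\end{enumerate}
The perturbation costs a little in $|\md F|_{\tr}$. For example, if $F$ is an isometry, any degree-preserving map that is constant near $F^{-1}(p_\pm)$ must have larger Jacobian somewhere. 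It is affordable only with strict slack $H>|\md F|_{\tr}$, which Theorem~A does not provide. So the reduction yields extremality only, and the foliation you sketch presupposes regular slices you have no way to produce.

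Rigidity needs a separate borderline argument. In the spirit of the proof of Theorem~\ref{thm:main}, suppose the first Neumann eigenvalue of $-\frac{4(m-1)}{m-2}\Delta+R_g$ were positive. The conformal change plus a boundary bump $1+\tau\chi$ would then give strict inequality and contradict extremality, so $R_g\equiv0$. Identifying $(M,g)$ with the unit ball and $F$ with a boundary isometry still requires further work that your proposal does not supply.
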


\phantomsection\label{thm:wwz-listing}
\begin{theoremB}[{\cite[Theorem~1.5]{WWZ},
with \cite[Corollary~1.17]{WangZhang}}]
Let $(M^m,g)$, $2\leq m\leq5$, be smooth, compact, connected, and oriented,
with nonempty boundary satisfying $H_{\partial M}>0$ and $R_g\geq0$.  If
$F:\partial M\to\bS^{m-1}$ is smooth, has nonzero degree, and satisfies
\[
 H_{\partial M}\geq(m-1)|\mathrm dF|,
\]
then there is $c>0$ such that $F$ extends to an isometry
\[
 \widetilde F:(M,cg)\longrightarrow\overline{\bB}^m.
\]
\end{theoremB}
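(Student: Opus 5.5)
Theorem~B is quoted from \cite[Theorem~1.5]{WWZ}; the plan below is one way to obtain it, taking the capillary $\mu$-bubble regularity of \cite[Corollary~1.17]{WangZhang} as given in dimensions up to five. The first step is to normalize and reduce to a situation close to Theorem~A. The hypothesis $H_{\partial M}\geq(m-1)|\mathrm dF|$ is scale invariant: under $g\mapsto c^2 g$ we have $H\mapsto c^{-1}H$ and $|\mathrm dF|\mapsto c^{-1}|\mathrm dF|$. So the constant $c$ cannot come from the hypotheses directly and must be produced by the rigidity argument. Since $\deg F\neq0$, $F$ is surjective, and $H_{\partial M}>0$ holds everywhere. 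We therefore cannot simply rescale to reach $|\mathrm dF|\le 1$ and $H\ge m-1$ simultaneously, because $|\mathrm dF|$ varies pointwise. Instead I would conformally change the boundary data.

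Concretely, I would run the capillary $\mu$-bubble / spacetime-harmonic style argument of \cite{WWZ} with variable contact angle. Consider the $1$-Lipschitz-type comparison given by $F$ and the pullbacks $F^*x_i$ of the coordinate functions of $\bS^{m-1}\subset\R^m$. Take the harmonic extensions $u_i$ in $M$ with $u_i|_{\partial M}=F^*x_i$. Stern-type integral identities for harmonic functions give
\[
 \int_M \tfrac{|\nabla^2 u|^2}{|\nabla u|}+\tfrac12 R_g|\nabla u|\;\le\;\int_{\partial M}\bigl(\partial_\nu|\nabla u| \text{-terms}\bigr),
\]
and the boundary terms are controlled by $H_{\partial M}$, $|\nabla^{\partial}u|$ and $\partial_\nu u$. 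Summing over $i$ and using $\sum_i|\nabla^\partial F^*x_i|^2\le |\mathrm dF|_{\tr}$-type bounds (this is where $|\mathrm dF|$ enters, via $\sum \sigma_j^2\le \sigma_1\sum\sigma_j$) would give a boundary integrand that is nonnegative exactly when $H\ge(m-1)|\mathrm dF|$. In low dimension, I would replace this with the dimension reduction of \cite{WWZ}. Pick a regular value and take a capillary $\mu$-bubble $\Sigma$ meeting $\partial M$ at an angle prescribed by $F$, chosen so that $F|_{\partial\Sigma}$ still has nonzero degree onto an equatorial $\bS^{m-2}$. The stability inequality, with $R_g\ge0$, then transfers the inequality $H\ge(m-2)|\mathrm d(F|_{\partial\Sigma})|$ to $\Sigma$ after a warped conformal change. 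Induct down to $m=2$, where Gauss--Bonnet gives $2\pi\chi\le\int_{\partial M}k\,\mathrm ds$ and $\int k\ge\int|\mathrm dF|\ge 2\pi|\deg F|$.

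In this chain, every inequality must be an equality. The equality case forces $|\mathrm dF|$ to be constant, say $c^{-1/2}$-normalized, and $H$ to be constant equal to $(m-1)|\mathrm dF|$. It also forces $R_g=0$, $\nabla^2u_i=0$ (or totally geodesic, flat slices in the bubble version), and $F$ to be a homothety on $\partial M$. After scaling by the appropriate $c$ we are in the equality case of Theorem~A, which yields the isometry to $\overline{\bB}^m$.

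The main obstacle is the inductive step. We need to choose the capillary angle function and the warping weight so that the degree condition and the inequality $H\ge (m-1)|\mathrm dF|$, which is now non-constant, both descend to the slice. This is delicate because $|\mathrm dF|$ is only Lipschitz. To handle it, I would first smooth $F$ with a small loss of the constant and then pass to the limit using the regularity of \cite{WangZhang}. Rigidity in the limit requires a separate argument showing that a non-strict inequality propagates to constancy of $H/|\mathrm dF|$.
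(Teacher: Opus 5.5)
\textbf{Verdict: genuine gap.} The paper does not prove Theorem~B; it imports \cite[Theorem~1.5]{WWZ}. Judged against what the statement requires, your proposal has a gap at the rigidity step, and its two-dimensional base case cannot work.

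\textbf{The rigidity step.} The capillary $\mu$-bubble descent is a proof by contradiction. It shows $\deg F=0$ when $R_g\geq0$ and a \emph{strict} boundary inequality holds. This is exactly how the paper uses \cite[Proposition~3.2]{WWZ}, with $H_{\partial M}>|\mathrm dF|_{\tr}$. In the borderline case nothing is contradicted, and there is no chain of inequalities on $M$ to saturate. Stability of a slice only constrains the data along that slice; it says nothing about $R_g$, $H_{\partial M}$ or $|\mathrm dF|$ elsewhere. Smoothing $F$ with a small loss of the constant destroys precisely the non-strict hypothesis that rigidity concerns, so passing to a limit yields nothing.

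Your sentence asserting that equality forces $|\mathrm dF|$ to be constant, $R_g=0$, and $F$ to be a homothety is therefore the missing proof, not a step of it; your last sentence concedes as much. An ambient mechanism is needed, for instance the eigenvalue argument the paper uses in its own rigidity proofs. Consider $-c_m\Delta u+R_gu=\lambda u$ with $\frac{c_m}{2}\partial_\nu u+(H_{\partial M}-|\mathrm dF|_{\tr})u=0$.
\begin{itemize}
\item If $\lambda_1>0$, the conformal change by the eigenfunction plus a collar perturbation gives strict inequalities, contradicting extremality.
\item Hence $\lambda_1=0$. This yields $R_g\equiv0$ and $H_{\partial M}=|\mathrm dF|_{\tr}=(m-1)|\mathrm dF|$, so $\mathrm dF$ is pointwise conformal.
\item Even this does not make $|\mathrm dF|$ constant. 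That needs further ambient input, e.g.\ flatness of $g$ and umbilicity of $\partial M$, after which Codazzi makes $H_{\partial M}$ constant.
\item Once $|\mathrm dF|\equiv a$, rescaling $g$ by $a^2$ puts you directly in Theorem~A.
\end{itemize}

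\textbf{The case $m=2$.} Constancy of $|\mathrm dF|$ cannot come from saturating Gauss--Bonnet-type inequalities, and your base case shows this: there the statement as quoted is false. (Also, for $K\geq0$ Gauss--Bonnet gives $2\pi\chi(M)\geq\int_{\partial M}k\,\mathrm ds$; your inequality is reversed.)

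Let $M\subset\R^2$ be the region bounded by a non-circular ellipse, and let $F$ be its outward Gauss map. Then $R_g=0$, $H_{\partial M}=\kappa>0$, $\deg F=1$ and $|\mathrm dF|=\kappa$. So every hypothesis of Theorem~B holds, with $H_{\partial M}=(m-1)|\mathrm dF|$, and every step of your two-dimensional chain is an equality. Yet $|\mathrm dF|$ is not constant, and no rescaling of $M$ is a round disk.

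So Theorem~B fails for $m=2$. The paper only invokes it for $3\leq m\leq5$ and handles its two-dimensional cases by separate arguments (Appendix~\ref{sec:two-dimensional} and the $k=2$ case of \cref{lem:weighted-ball}). A correct rigidity proof must use $m\geq3$, where $(m-1)|\mathrm dF|=|\mathrm dF|_{\tr}$ has content and Codazzi is available on the boundary.

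\textbf{The harmonic-function route.} This is not a usable alternative. Stern-type identities rest on Gauss--Bonnet for level sets and are essentially three-dimensional. Summing over the harmonic extensions of $x_i\circ F$ produces no identified boundary term in which $\sum_j\sigma_j^2\leq\sigma_1\sum_j\sigma_j$ would deliver $H_{\partial M}\geq(m-1)|\mathrm dF|$.
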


Let $\overline{\bB}^{\bH^n}_\rho$ denote the closed radius-$\rho$ ball in
hyperbolic $n$-space of sectional curvature $-1$.  Its boundary is the round
sphere $\bS^{n-1}(\sinh\rho)$ and has outward mean curvature
$(n-1)\coth\rho$.

We now state the main results of this paper.
\begin{theo}\label{thm:main}
Let $(M^n,g)$, $2\leq n\leq5$, be a compact, connected, oriented smooth
Riemannian manifold with connected nonempty boundary.  Fix $\rho>0$ and assume
\begin{equation*}
 R_g\geq-n(n-1),
 \qquad
 H_{\partial M}\geq(n-1)\coth\rho.
\end{equation*}
If there is a smooth map
\begin{equation*}
 f:(\partial M,g_{\partial M})\longrightarrow\bS^{n-1}(\sinh\rho)
\end{equation*}
with $\deg f\neq0$ and $\Lip(f)\leq1$, then $f$ extends to an isometry
\[
 \widetilde f:(M,g)\longrightarrow\overline{\bB}^{\bH^n}_\rho.
\]
\end{theo}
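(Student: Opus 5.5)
The plan is to reduce to the Euclidean rigidity of Theorem~B by the Jang deformation, using the "hyperbolic-as-initial-data" trick. Regard $(M,g)$ as an initial data set $(M,g,k)$ with $k=g$. Then the dominant energy condition $\mu\geq|J|$ holds, because
\[
 2\mu=R_g+(\tr_g k)^2-|k|^2=R_g+n(n-1)\geq0,\qquad J=\operatorname{div}k-\md\tr k=0 .
\]
We solve the Jang equation $H_{\Sigma}-\tr_\Sigma k=0$ for the graph $\Sigma=\{t=u(x)\}\subset M\times\R$, with Dirichlet data $u=0$ on $\partial M$. The boundary condition $H_{\partial M}\geq(n-1)\coth\rho>n-1=\tr_{\partial M}k$ is exactly the barrier condition that gives boundary gradient estimates. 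Interior blow-up is excluded or handled as in Schoen--Yau, with blow-up components bounded by MOTS/MITS.

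The Jang graph $(\bar g=g+\md u^2)$ then satisfies the Schoen--Yau identity
\[
 R_{\bar g}\geq 2|X|^2-2\operatorname{div}X
\]
for some vector field $X$. On the boundary, since $u=0$ there, $\bar g|_{\partial M}=g|_{\partial M}$. The boundary mean curvature changes according to the normal derivative of $u$, and one should obtain $\bar H\geq (n-1)\coth\rho-\langle X,\nu\rangle$-type terms, in the spirit of Bray--Khuri and Lu--Miao.

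Next we run a weighted (spectral) capillary $\mu$-bubble dimension reduction on $(M,\bar g)$. The divergence term is absorbed by a conformal change or a spectral weight $\phi$ solving $-\Delta\phi+\operatorname{div}(X)\phi\ldots$. This produces an effective Euclidean-type inequality, $R\geq0$ in the spectral sense with boundary mean curvature at least $(n-1)\coth\rho\cdot|\md F|$. The rescaled map $F=f/\sinh\rho:\partial M\to\bS^{n-1}$ has $|\md F|\leq 1/\sinh\rho$. The key numerical comparison is
\[
 (n-1)\coth\rho\geq (n-1)\cosh\rho\,|\md F|\geq (n-1)|\md F|,
\]
so the spectral version of Theorem~B (proved by the same capillary $\mu$-bubble argument, valid for $n\leq5$ by the Wang--Zhang regularity) applies. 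It forces equality everywhere: $X\equiv0$, the Jang graph is flat, and $\bar g$ is a scaled Euclidean ball.

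In the rigidity step we unwind the equalities. Flatness of the Jang graph together with $J=0$ and equality in $\mu\geq|J|$ should force the second fundamental form of the graph in $(M\times\R,g+\md t^2)$ to equal $k$. Then $(M,g)$ embeds as a spacelike hypersurface with second fundamental form $g$ in Minkowski space. Such a hypersurface is umbilic with principal curvature $1$, so it is a piece of a hyperboloid, i.e.\ $\bH^n$, and the boundary data identify $M$ with $\overline{\bB}^{\bH^n}_\rho$ with $\widetilde f$ extending $f$.

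I expect the main obstacle to be the boundary analysis of the Jang equation. We need solvability with Dirichlet data and control of blow-up near $\partial M$. We also need the precise boundary term in the mean curvature of the Jang graph, so that the spectral $\mu$-bubble with capillary angle still sees the sharp constant $\coth\rho$ rather than a lossy one. The first thing to try is the Lu--Miao boundary formula together with choosing the capillary angle function adapted to $\langle X,\nu\rangle$.
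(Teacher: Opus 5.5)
Your overall architecture matches the paper: Jang with $K=g$ and zero Dirichlet data, the barrier from $H_{\partial M}>n-1=\tr_{\partial M}K$, reduction to Euclidean ball rigidity, and an endgame from $h=g$. Your Minkowski-hyperboloid endgame is a fine substitute for the paper's $\Hess_gU=Ug$ argument. But the central quantitative step is wrong: the Jang graph cannot see $\coth\rho$. Since $z=0$ on $\partial M$, set $p=\partial_\nu z$ and $w=\sqrt{1+p^2}$. Then $\bar H=H_{\partial M}/w$ and $X(\bar\nu)=-p^2H_{\partial M}/w+(n-1)p$, so
\[
 \bar H-X(\bar\nu)=wH_{\partial M}-(n-1)p\geq\sqrt{H_{\partial M}^2-(n-1)^2}\geq\frac{n-1}{\sinh\rho},
\]
by $(wH-pP)^2-(H^2-P^2)=(pH-wP)^2$. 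This is sharp. On $\overline{\bB}^{\bH^n}_\rho$ one has $p=\sinh\rho$, $w=\cosh\rho$ and $X=0$. The Jang graph there is the flat ball of radius $\sinh\rho$, with mean curvature $(n-1)/\sinh\rho<(n-1)\coth\rho$.

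So your effective bound $(n-1)\coth\rho$ (or $(n-1)\coth\rho\,|\md F|$) is false on the model, and no capillary angle can restore it. Indeed, Theorem~B forces $H_{\mathrm{eff}}=(n-1)|\md F|$ with $|\md F|>0$. Your chain $H_{\mathrm{eff}}\geq(n-1)\cosh\rho\,|\md F|$ would then give $\cosh\rho\leq1$, a contradiction for the hyperbolic ball itself. The correct matching is $\sqrt{H_{\partial M}^2-(n-1)^2}\geq(n-1)/\sinh\rho\geq(n-1)|\md F|$ for $F=f/\sinh\rho$; this is why the threshold is $\coth\rho=\sqrt{1+\sinh^{-2}\rho}$. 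The $X$-term is absorbed as a Robin term, not through the angle. The quantity $Q(\phi,\phi)=\int_G(c_n|\bar\nabla\phi|^2+R_{\bar g}\phi^2)+2\int_{\partial M}X(\bar\nu)\phi^2$ is nonnegative. A positive first eigenfunction $u$ then gives $u^{4/(n-2)}\bar g$ with $R\geq0$ and boundary mean curvature $u^{-2/(n-2)}(\bar H-X(\bar\nu))$. The Wang--Wang--Zhu capillary argument and Theorem~A apply to this metric.

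The second gap is the blow-up, which you dismiss as ``excluded or handled as in Schoen--Yau''; this is where most of the paper's work goes. The limit is only a graph over a component $U\subset M$, and it may have complete ends asymptotic to $\Gamma\times\R$. On such a graph there is no first eigenfunction, and a capillary minimizer could run off along the ends. So neither your dimension reduction nor your equality analysis is justified. The paper proves the graph is compact in four steps:
\begin{enumerate}
\item If ends exist, $(L,B)$ is subcritical. A null sequence would force $X=0$ and $h=g$, hence $\Hess_gz=wg$ and $w=z+C$. Then $\operatorname{arcosh}w$ is $1$-Lipschitz yet unbounded at finite distance from $\Gamma$, which is impossible.
\item A weighted-capacity argument then gives a positive Robin solution $v$ with $\inf_{E_j}v>0$ on every end.
\item Truncating at $Z_R$ with a small Robin perturbation gives metrics with $R>0$, the sharp boundary inequality, uniform geometry, and $\dist(\partial M,Z_R)\to\infty$.
\item Area bounds plus monotonicity keep the degree-carrying capillary component away from $Z_R$. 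Gauss--Bonnet ($n=3$) or Wang--Wang--Zhu ($n=4,5$) then contradicts $\deg f\neq0$.
\end{enumerate}
Finally, the conformal exponent $4/(n-2)$ is unavailable for $n=2$. The paper treats that case separately, via normal-Jacobian comparison, Gauss--Bonnet, and the Bol--Fiala equality case.
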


The relation
\[
 \coth\rho=\sqrt{1+(\sinh\rho)^{-2}}
\]
shows how the Euclidean boundary threshold is modified by negative
curvature.  For a closed connected oriented Riemannian $(n-1)$-manifold
$(\Sigma,g)$, define its hyperspherical radius by
\begin{equation}\label{eq:hyperspherical-radius}
 \operatorname{Rad}_{\bS^{n-1}}(\Sigma,g)
 :=\sup\left\{r>0:\begin{array}{c}
 \text{there is a Lipschitz map }
 F:(\Sigma,g)\to\bS^{n-1}(r),\\[-2pt]
 \deg F\neq0\text{ and }\Lip(F)\leq1
 \end{array}\right\}.
\end{equation}

\phantomsection\label{thm:btw-radius}
\begin{theoremC}[{\cite[Corollary~2]{BTW}
and \cite[Theorem~A]{AmmannLockman}}]
Let $(M^n,g)$, $n\geq3$, be a compact, connected, oriented Riemannian spin
manifold with connected boundary.  If $R_g\geq-n(n-1)$, then
\begin{equation}\label{eq:btw-radius}
 \inf_{\partial M} H_{\partial M}
 \leq(n-1)\sqrt{1+
 \operatorname{Rad}_{\bS^{n-1}}(\partial M,g_{\partial M})^{-2}}.
\end{equation}
Equality holds if and only if $(M,g)$ is isometric to a closed geodesic ball
$\overline{\bB}^{\bH^n}_\rho$, where
\[
 \sinh\rho
 =\operatorname{Rad}_{\bS^{n-1}}(\partial M,g_{\partial M}).
\]
\end{theoremC}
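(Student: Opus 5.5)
Theorem~C is quoted from \cite{BTW,AmmannLockman}, and I would prove it with spinors, following the Dirac-operator method behind Llarull's theorem \cite{Llarull} adapted to boundaries. I argue by contradiction. Suppose $0<r<\operatorname{Rad}_{\bS^{n-1}}(\partial M)$ and there is a map $F:\partial M\to\bS^{n-1}(r)$ with $\deg F\neq0$ and $\Lip(F)\le1$. After smoothing, $F$ is smooth and $\Lip(F)\le1+\eps$. Assume also that
\[
 \inf_{\partial M}H>(n-1)\sqrt{1+r^{-2}}.
\]
Letting $r\uparrow\operatorname{Rad}_{\bS^{n-1}}(\partial M)$ then gives \eqref{eq:btw-radius}.

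\emph{Operator and integral formula.} Pull back the spinor bundle $E$ of $\bS^{n-1}(r)$ along $F$, or its suspension if parity demands it. Extend $F^*E$ to a collar of $\partial M$ and cap it off trivially in the interior. Twist the spinor bundle $S_M$ by this bundle, and on the twisted bundle consider the modified Dirac operator
\[
 \mathcal D=D+\tfrac{in}{2}.
\]
The twisted connection is flat away from the collar; this is where $n\ge3$ is used. In that region the Schr\"odinger--Lichnerowicz formula, written with the hyperbolic Penrose operator $P_X\psi=\nabla_X\psi+\tfrac i2X\cdot\psi+\tfrac1nX\cdot D\psi$, gives
\[
 \int_M|\mathcal D\psi|^2\ \ge\ \int_M\tfrac{R_g+n(n-1)}{4}|\psi|^2
 +\int_{\partial M}\Big(\tfrac H2|\psi|^2-\langle \mathcal A\psi,\psi\rangle\Big),
\]
up to the twisting-curvature term. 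Here $\mathcal A$ is the boundary Dirac operator twisted by $F^*E$, coupled to the imaginary term $\tfrac{i(n-1)}{2}\nu\cdot$. Since $\nu\cdot$ anticommutes with the boundary Dirac operator, $\mathcal A^2$ is the squared twisted boundary Dirac operator plus $\tfrac{(n-1)^2}{4}$.

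\emph{Boundary estimate.} The key bound comes from Llarull's Bochner argument on $\partial M$. Because $\Lip(F)\le1$, the twisting curvature of $F^*E$ is at most $(n-1)(n-2)/(4r^2)$ on two-vectors. So on the part of the spectrum that matters, the twisted boundary Dirac operator has $|\lambda|\le\tfrac{n-1}{2r}$, and hence
\[
 |\mathcal A|\le\tfrac{n-1}{2}\sqrt{1+r^{-2}}.
\]
I would impose the APS-type or chirality boundary condition adapted to $\mathcal A$. The nonvanishing of $\deg F$ makes the relevant index, equivalently the index of the twisted boundary operator, nonzero, and this produces a nontrivial solution of $\mathcal D\psi=0$ satisfying the boundary condition. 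Inserting $\psi$ into the integral formula, $R_g\ge-n(n-1)$ together with the strict mean-curvature bound forces $\psi\equiv0$, which is a contradiction.

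\emph{Rigidity and main obstacle.} In the equality case every inequality must be sharp. Then $P\psi=0$ and $D\psi$ is proportional to $\psi$, so $\psi$ is an imaginary Killing spinor of Killing number $\tfrac i2$ (on the untwisted factor, once the twisting is shown to be flat). A manifold carrying a full set of such spinors has constant sectional curvature $-1$. On the boundary, equality forces $H\equiv(n-1)\coth\rho$ and $F$ to be an isometry onto $\bS^{n-1}(\sinh\rho)$. A hyperbolic manifold whose boundary is an isometric round sphere with that mean curvature is the ball $\overline{\bB}^{\bH^n}_\rho$, which one sees by developing along the Killing spinors or by unique continuation from the umbilic boundary.

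The step I expect to be hardest is the coupling between the twisted bundle, which lives only near $\partial M$, and the boundary condition. Two things must hold together: the index must be nonzero exactly when $\deg F\neq0$, and the interior twisting term must not destroy positivity. My first attempt would be to attach the cylinder $\partial M\times[0,\infty)$ carrying a warped hyperbolic-type metric, then use a relative or Callias-type index in the style of Cecchini--Zeidler, so that the boundary potential plays the role of the mean-curvature term.
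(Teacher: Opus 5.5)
The paper does not prove Theorem~C. It quotes it from the spin literature, and the only related result proved here is \cref{cor:radius}. The paper obtains that corollary from \cref{thm:main} (Jang equation plus capillary $\mu$-bubbles), without spin, but only for $2\le n\le5$ and without rigidity. So your spinor argument has to stand on its own, and it has a genuine gap at the point where $\deg F$ is supposed to enter.

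\textbf{The index mechanism.} You cap $F^*E$ off over the interior of $M$, and you also want the boundary Dirac operator twisted by $F^*E$ to have nonzero index. These two requirements are incompatible. If a twisting bundle (or $K$-class) on $\partial M$ extends over $M$, cobordism invariance forces the twisted index on $\partial M=\partial(M)$ to vanish; this is just Stokes applied to $\hat A\,\mathrm{ch}$.
\begin{itemize}
\item Llarull's nonzero count uses the class $F^*S^+-F^*S^-$, which for exactly this reason cannot extend over $M$ when $\deg F\neq0$.
\item The full spinor bundle does extend, but its twisted boundary index is zero.
\item For even $n$ the boundary is odd-dimensional and the index vanishes trivially.
\end{itemize}
In addition, interpolating the pulled-back connection to a flat one creates transition curvature that no hypothesis controls. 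The interior term $\frac{R_g+n(n-1)}{4}\geq0$ cannot absorb it, and $n\geq3$ plays no role here. Your closing remark about a Callias-type or relative index points the right way. The degree has to enter through Clifford multiplication by $F/r$ itself, on a flat trivial twisting bundle, either as a potential or inside a local boundary condition rotated against $i\nu$. That is precisely the part the proposal leaves undone.

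\textbf{The boundary estimate.} As stated it is false. The twisted boundary operator is unbounded, so no bound $|\mathcal A|\le\frac{n-1}{2}\sqrt{1+r^{-2}}$ can hold. Llarull's curvature estimate bounds $D^2$ from below; it does not bound eigenvalues from above. If you impose an APS condition for $\mathcal A$, the boundary term becomes signed for free. But then the kernel vanishes automatically, and the index on $M$ is an APS index with $\eta$-terms, not something governed by $\deg F$.

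Your untwisted $\mathcal A$ does give a clean route with no index on $M$ at all. Since $\mathcal A^2=D_{\partial M}^2+\frac{(n-1)^2}{4}$, solve $\mathcal D\psi=0$ with APS data equal to an eigenspinor of $\mathcal A$ for its smallest positive eigenvalue, and insert $\psi$ into your integral formula. This gives a Hijazi--Montiel--Rold\'an-type bound
$\lambda_1(D_{\partial M})^2\ge\frac14\bigl((\inf_{\partial M}H)^2-(n-1)^2\bigr)$.
The degree then enters only through B\"ar's Dirac-eigenvalue estimate in terms of the hyperspherical radius: $\lambda_1(D_{\partial M})\le\frac{n-1}{2r}$ whenever there is a $1$-Lipschitz nonzero-degree map to $\bS^{n-1}(r)$. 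That estimate is proved by an index argument on the closed manifold $\partial M$. Combining the two bounds gives \eqref{eq:btw-radius}.

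\textbf{Rigidity.} Your contradiction setup uses only strict inequalities and smoothed maps, so the equality case does not follow from it. You need:
\begin{itemize}
\item that the supremum in \eqref{eq:hyperspherical-radius} is attained, via Arzel\`a--Ascoli and $C^0$-stability of the degree;
\item an argument that works for a merely Lipschitz extremal map.
\end{itemize}
Also, an index argument yields a kernel of dimension $|\mathrm{ind}|$, not a full set of imaginary Killing spinors. So constant curvature $-1$ is not justified as stated.
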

Gromov formulated the analogous rigidity problem for hyperbolic balls in
\cite[p.~110]{GromovFourLectures}.  Brendle--Tsiamis--Wang proved the spin
case \cite[Corollary~2]{BTW}.  Our first main theorem proves the non-spin
case in dimensions $2\leq n\leq5$.

In the dimensions considered here, \cref{thm:main} yields the same estimate
without a spin assumption.

\begin{coro}\label{cor:radius}
Let $(M^n,g)$, $2\leq n\leq5$, satisfy the topological and regularity
hypotheses of \cref{thm:main}, and suppose that
$R_g\geq-n(n-1)$.  Then
\begin{equation}\label{eq:nonspin-radius}
 \inf_{\partial M}H_{\partial M}
 \leq(n-1)\sqrt{1+
 \operatorname{Rad}_{\bS^{n-1}}(\partial M,g_{\partial M})^{-2}}.
\end{equation}
\end{coro}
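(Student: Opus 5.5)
We argue by contradiction, using \cref{thm:main} as the only geometric input. Write $r_0:=\operatorname{Rad}_{\bS^{n-1}}(\partial M,g_{\partial M})$. If no admissible map exists, so the supremum is over the empty set, we read $r_0=0$ and $r_0^{-2}=+\infty$, and \eqref{eq:nonspin-radius} holds trivially. Likewise, if $\inf H_{\partial M}\le n-1$ there is nothing to prove, since the right-hand side is always at least $n-1$. So we may assume $r_0>0$ and $h:=\inf_{\partial M}H_{\partial M}>n-1$, and we suppose, for contradiction, that
\[
 h>(n-1)\sqrt{1+r_0^{-2}}.
\]
If $r_0=\infty$, the right-hand side equals $n-1$, and the strict inequality is exactly the assumption $h>n-1$; the same argument below applies with $r$ taken large.

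\textbf{Choosing a radius with slack.} Define $\rho_0>0$ by $(n-1)\coth\rho_0=h$. Since $\coth\rho_0=\sqrt{1+\sinh^{-2}\rho_0}$, the contradiction hypothesis becomes $\sinh\rho_0<r_0$. By the definition of the supremum we can pick $r$ with $\sinh\rho_0<r<r_0$ (or any $r>\sinh\rho_0$ when $r_0=\infty$) and a Lipschitz map $F:\partial M\to\bS^{n-1}(r)$ with $\deg F\neq0$ and $\Lip(F)\le1$. Let $\rho$ be given by $\sinh\rho=r$, so $\rho>\rho_0$.

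\textbf{Gaining strictness and smoothness.} Compose $F$ with the homothety $\bS^{n-1}(r)\to\bS^{n-1}(r')$, where $\sinh\rho_0<r'<r$. The result has Lipschitz constant at most $r'/r<1$ and nonzero degree. Smooth this map by a standard convolution in charts followed by nearest-point projection onto the sphere. For a small enough perturbation the Lipschitz constant stays $\le1$ and the map stays homotopic, so the degree is unchanged. This produces a smooth $f:\partial M\to\bS^{n-1}(\sinh\rho')$ with $\rho'>\rho_0$, $\deg f\neq0$ and $\Lip(f)\le1$. Because $\rho'>\rho_0$ and $\coth$ is decreasing,
\[
 H_{\partial M}\ge h=(n-1)\coth\rho_0>(n-1)\coth\rho'.
\]
The interior condition $R_g\ge-n(n-1)$ holds by assumption.

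\textbf{Contradiction.} \cref{thm:main} now applies with radius $\rho'$ and gives an isometry $(M,g)\cong\overline{\bB}^{\bH^n}_{\rho'}$. In that ball the boundary mean curvature is identically $(n-1)\coth\rho'$, which contradicts the strict inequality $H_{\partial M}>(n-1)\coth\rho'$. Hence \eqref{eq:nonspin-radius} holds.

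The only step needing care is the smoothing. \cref{thm:main} requires a smooth map with $\Lip\le1$, whereas the radius is defined using Lipschitz maps. This is why we first build in slack by shrinking the target radius: the resulting strict Lipschitz bound $r'/r<1$ absorbs the small increase in Lipschitz constant caused by smoothing. The strict gap between $\rho'$ and $\rho_0$ then makes the rigidity conclusion of \cref{thm:main} contradictory.
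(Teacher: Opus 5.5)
Your proposal is correct and follows the route the paper intends: the paper gives no written proof and only remarks that \cref{thm:main} yields \eqref{eq:nonspin-radius}, which is exactly your contradiction argument (an admissible radius strictly above $\sinh\rho_0$, then the rigidity conclusion of \cref{thm:main} against the strict mean-curvature inequality). The details you add are ones the paper leaves unstated: shrinking the target radius so that smoothing the Lipschitz map still gives $\Lip\le1$, and the conventions for an empty admissible set and for $r_0=\infty$.
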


Our second main result is the corresponding Listing form.

\begin{theo}\label{thm:listing}
Let $(M^n,g)$, $2\leq n\leq5$, satisfy the topological and regularity
hypotheses of \cref{thm:main}.  Assume
\[
 R_g\geq-n(n-1),
 \qquad H_{\partial M}>n-1.
\]
Let $F:(\partial M,g_{\partial M})\to\bS^{n-1}$ be a smooth map of nonzero
degree.  If
\begin{equation}\label{eq:listing-assumption}
 H_{\partial M}\geq(n-1)\sqrt{1+|\mathrm dF|^2}
 \quad\text{on }\partial M,
\end{equation}
then there is a unique $a>0$ such that the map
$aF:\partial M\to\bS^{n-1}(a)$ extends to an isometry
\[
 \widetilde F:(M,g)\longrightarrow
 \overline{\bB}^{\bH^n}_{\operatorname{arsinh}a}.
\]
Here the unit sphere is viewed in $\mathbb R^n$, so that $aF$ takes values in
$\bS^{n-1}(a)$.  Equality holds in
\eqref{eq:listing-assumption} everywhere.
\end{theo}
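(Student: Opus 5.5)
Reduce \cref{thm:listing} to \cref{thm:main} by rescaling $F$ to a $1$-Lipschitz map onto a sphere of suitable radius. Set
\[
 a:=\Bigl(\max_{\partial M}|\mathrm dF|\Bigr)^{-1}\in(0,\infty).
\]
The maximum is positive because $\deg F\neq0$ forces $F$ to be surjective, so $\mathrm dF\not\equiv0$. The map $f:=aF:\partial M\to\bS^{n-1}(a)$ then satisfies $|\mathrm df|=a|\mathrm dF|\leq1$ pointwise, hence $\Lip(f)\leq1$ with respect to the intrinsic distances, and $\deg f=\deg F\neq0$. Put $\rho:=\operatorname{arsinh}a$, so that $\sinh\rho=a$ and $\coth\rho=\sqrt{1+a^{-2}}$.

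The key point is the mean curvature hypothesis at a point $p_0$ where $|\mathrm dF|$ attains its maximum $a^{-1}$. There \eqref{eq:listing-assumption} only gives $H(p_0)\geq(n-1)\sqrt{1+a^{-2}}$, while elsewhere the bound is weaker. So \cref{thm:main} does not apply directly, since it needs $H\geq(n-1)\coth\rho$ everywhere. I would therefore not apply \cref{thm:main} as a black box. Instead I would rerun its proof, which is the Jang deformation followed by capillary $\mu$-bubble reduction to the Euclidean Theorem B (the Listing-type statement \cite[Theorem~1.5]{WWZ}). In that proof the boundary inequality presumably enters only through the pointwise comparison $H\geq(n-1)\sqrt{1+|\mathrm df|^2\sinh^{-2}\!\rho\cdot(\ldots)}$. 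This is exactly the pointwise form of $H\ge (n-1)\coth\rho$ after writing $\coth\rho=\sqrt{1+\sinh^{-2}\rho}$ and inserting $|\mathrm dF|$. More concretely, I expect the Jang reduction to produce a Euclidean-type fill-in whose boundary mean curvature is at least $(n-1)|\mathrm dF|$-weighted, precisely when $H\geq(n-1)\sqrt{1+|\mathrm dF|^2}$ holds pointwise. Theorem B, rather than Theorem A, is then the appropriate terminal step, and it yields rigidity with a scale constant $c$.

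The main obstacle is the step just described: checking that the paper's argument for \cref{thm:main} is genuinely pointwise in $|\mathrm dF|$, which is what lets Theorem B be invoked in place of Theorem A. If a global version is all that is available, a fallback is a contradiction argument. Assuming strict inequality somewhere, I would perturb the metric near $\partial M$, conformally or by bending the boundary inward along the normal, to trade the excess mean curvature for larger $|\mathrm dF|$-compatibility. The limiting case would then contradict \cref{cor:radius}.

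Once rigidity is obtained, $(M,g)$ is isometric to a hyperbolic ball of some radius $\rho'$, with boundary a round sphere $\bS^{n-1}(\sinh\rho')$ of constant mean curvature $(n-1)\coth\rho'$. The rigidity conclusion also identifies $F$, up to the scaling, with the boundary restriction of the isometry; in particular $|\mathrm dF|\equiv1/\sinh\rho'$. Substituting into \eqref{eq:listing-assumption} gives equality everywhere and forces $a=\sinh\rho'$. Uniqueness of $a$ follows because $aF$ must be an isometry onto $\bS^{n-1}(a)$, which pins down $a=1/|\mathrm dF|$.
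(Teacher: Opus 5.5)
Your main route, rerunning the Jang/capillary argument with Theorem~B in place of Theorem~A, is the paper's route for $3\le n\le5$. However, you leave its decisive step as an expectation, and the rescaling by $\max|\md F|$ is a detour. The step follows at once from \cref{prop:jang}: with $K=g$ the boundary identity is pointwise,
\[
 \bar H-X(\bar\nu)=wH_{\partial M}-(n-1)p\geq\sqrt{H_{\partial M}^2-(n-1)^2}\geq(n-1)|\md F|\geq|\md F|_{\tr},
\]
where the middle inequality is just \eqref{eq:listing-assumption} rewritten. Two facts make this inequality survive every conformal change used to exclude cylindrical ends and to force $\lambda_1=0$. First, $\bar g_{\partial M}=g_{\partial M}$. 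Second, under $\widehat g=u^{4/(n-2)}\bar g$ with the Robin condition, both $H^{\widehat g}_{\partial M}$ and $|\md F|_{\widehat g}$ acquire the same factor $u^{-2/(n-2)}$. The constant $a=c^{-1/2}$ is then produced by Theorem~B; no a priori choice of $a$ is needed.

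You should also say where $H_{\partial M}>n-1$ enters. It is the Jang barrier condition $H_{\partial M}>|\tr_{\partial M}K|$, which \eqref{eq:listing-assumption} does not supply at critical points of $F$. Finally, ``once rigidity is obtained'' skips a real step, because Theorem~B only identifies the Jang graph $(G,\bar g)$ with a Euclidean ball. To see that $(M,g)$ itself is hyperbolic, you need the following chain:
\begin{enumerate}
\item $h=g$ from the equality case of \eqref{eq:q-exact};
\item hence $\Hess_gU=Ug$ and $|\nabla U|_g^2=U^2-1$ for $U=w$;
\item the level-set analysis, with the radius fixed by $g_{\partial M}=(aF)^*g_{\bS^{n-1}(a)}$.
\end{enumerate}

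Two genuine gaps remain.

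\emph{The case $n=2$ is not covered.} The Jang/conformal machinery needs $n\geq3$. Rerunning the two-dimensional proof of \cref{thm:main} pointwise uses the comparison radius $\operatorname{arccoth}\kappa(p)$ and gives $A\leq\int_{\partial M}(\kappa-q)$ with $q=\sqrt{\kappa^2-1}$. This yields only $K_g\equiv-1$, a disk, and $\int_{\partial M}q\,\md s=2\pi$. Since $\kappa$ is not known to be constant, you cannot invoke the Bol--Fiala equality case directly. The paper adds Jensen's inequality for the strictly concave function $x\mapsto\sqrt{x^2-1}$, which gives $2\pi\leq\sqrt{(2\pi+A)^2-L^2}$. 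It then uses $L^2\geq A^2+4\pi A$ to force equality in both inequalities, and hence a hyperbolic disk with constant $\kappa$.

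\emph{Your fallback cannot work as stated.} \cref{thm:listing} implies \cref{thm:main} (take $F=f/\sinh\rho$), not conversely. Moreover, \cref{cor:radius} is only an inequality for $\inf_{\partial M}H_{\partial M}$, with no rigidity statement and no pointwise dependence on $|\md F|$. You give no mechanism for trading excess mean curvature against $|\md F|$. Any such trade would amount to deducing the stronger Listing-type statement from the weaker Llarull-type one.
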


\begin{rema}\label{rem:untrapped}
The condition $\deg F\ne0$ does not imply that $\mathrm dF$ is nonzero at
every point.  At a critical point of $F$, \eqref{eq:listing-assumption} only
gives $H_{\partial M}\geq n-1$.  Hence \eqref{eq:listing-assumption} does not
imply $H_{\partial M}>n-1$ on $\partial M$.  For $3\leq n\leq5$, this strict
inequality is needed because the Dirichlet Jang theorem requires
$H_{\partial M}>|\tr_{\partial M}K|=n-1$ on $\partial M$.
\end{rema}

The weighted spin product problem was recently treated by Chow
\cite[Theorem~A]{Chow}; see also
Hao--Shi--Sun \cite{HaoShiSun}.

\phantomsection\label{thm:chow}
\begin{theoremD}
Let $3\leq N\leq7$ and $1\leq m\leq N-2$.  Let $(M^N,g)$ be closed,
connected, oriented, and spin, and let $\psi\in C^\infty(M)$ satisfy
\[
 -2\Delta\psi-|\nabla\psi|^2+R_g
 \geq (N-m)(N-m-1).
\]
Suppose that $\Phi:M\to\bS^{N-m}\times\mathbb T^m$ has nonzero degree and
its spherical projection is area non-increasing when $N-m\geq3$, and
$1$-Lipschitz when $N-m=2$.  Then $(M,g)$ is isometrically covered by
$\bS^{N-m}\times\R^m$.
\end{theoremD}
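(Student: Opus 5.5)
The plan is to combine torical dimension reduction by weighted stable minimal hypersurfaces with a weighted Llarull argument for spin Dirac operators. Rigidity is then propagated back up the tower of slices. Write $\Phi=(\Phi_S,\Phi_T)$ with $\Phi_S:M\to\bS^{N-m}$ and $\Phi_T:M\to\mathbb T^m$. Write $R_\psi:=-2\Delta\psi-|\nabla\psi|^2+R_g$ for the weighted scalar curvature appearing in the hypothesis. The standard observation is that $R_\psi\geq\sigma$ is exactly what survives from the second variation of the weighted area $\int_\Sigma e^{\psi}\,\md A$.

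\textbf{Step 1 (torical reduction).} Let $\theta_1,\dots,\theta_m$ be the angle coordinates of $\mathbb T^m$. The class $\Phi_T^*[\md\theta_m]\in H^1(M;\mathbb Z)$ is Poincar\'e dual to a homology class, and I would minimize the weighted area $\int e^{\psi}\,\md A$ in that class. Since $N\leq7$, the minimizer $\Sigma_1$ is smooth. Because $\deg\Phi\neq0$, a component of $\Sigma_1$ maps with nonzero degree to $\bS^{N-m}\times\mathbb T^{m-1}$.

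Stability gives a positive first eigenfunction $u_1$ of the weighted Jacobi operator. Using the Schoen--Yau rearrangement with the new weight $\psi_1=\psi|_{\Sigma_1}+\log u_1$, I expect
\[
 -2\Delta_{\Sigma_1}\psi_1-|\nabla\psi_1|^2+R_{\Sigma_1}\geq (N-m)(N-m-1)+|A|^2+\text{(nonnegative terms)}.
\]
Iterating $m$ times produces a closed spin $(N-m)$-manifold $\Sigma_m$ (spin passes to two-sided hypersurfaces), with a weight $\psi_m$ satisfying the same inequality. The restriction of $\Phi_S$ has nonzero degree and keeps the area non-increasing, respectively $1$-Lipschitz, property, since these are inherited by restriction to submanifolds. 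The delicate point is the bookkeeping of constants: the weight constant must be exactly $(N-m)(N-m-1)$ at each stage. This works because the flat $\mathbb T$ directions contribute nothing.

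\textbf{Step 2 (weighted Llarull on $\Sigma_m$).} When $N-m\geq3$, I would twist the spinor bundle by $\Phi_S^*$ of the spinor bundle of $\bS^{N-m}$ (or, in odd dimension, use the product with an interval). Then I would consider the modified Dirac operator $D_\psi=D+\tfrac12 c(\nabla\psi)$, or equivalently conjugate by $e^{\psi/2}$. Its Lichnerowicz formula is
\[
 \int|D_\psi s|^2=\int|\nabla s|^2+\tfrac14 R_\psi|s|^2+\langle\mathcal R^E s,s\rangle .
\]
Here $\mathcal R^E\geq-\tfrac14 (N-m)(N-m-1)$ by the area non-increasing hypothesis, exactly as in Llarull's argument. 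The index is nonzero because $\deg\Phi_S\neq0$. Hence all inequalities become equalities. When $N-m=2$, I would use the $1$-Lipschitz Gauss--Bonnet version instead.

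\textbf{Step 3 (rigidity).} Equality on $\Sigma_m$ forces $\Sigma_m$ to be round with $\psi_m$ constant. Going back up the tower, equality in each stability inequality forces $|A|=0$, $u_i$ constant and the Ricci terms to vanish. So each slice is totally geodesic, and a local foliation argument yields $\psi$ constant and a local splitting $M\cong\bS^{N-m}\times\R^m$ at the level of the universal cover.

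I expect this last step to be the main obstacle. Passing rigidity from one weighted minimizer to the whole manifold requires constructing a foliation by weighted minimizers near $\Sigma_i$ and showing that all leaves are rigid; this is the standard but technical Bray--Brendle--Neves type argument. I would first try to run it via the inverse function theorem on constant weighted mean curvature leaves.
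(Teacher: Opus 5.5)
The paper gives no proof of Theorem~D. It is quoted from Chow \cite[Theorem~A]{Chow} (see also \cite{HaoShiSun}) as the closed spin model for \cref{thm:ball-torus}, so the natural comparison is with the paper's proof of \cref{thm:ball-torus}. Your outline follows that template.

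\emph{Weighted slicing.} This is as in \cref{lem:one-step-weighted-slicing}. Its exact identity
\[
 R^{\Sigma}_{\psi_1}=R_\psi|_{\Sigma}+|\nabla_\Sigma\log u|^2+|A_\Sigma|^2+2\lambda,
 \qquad \psi_1=\psi|_\Sigma+\log u,
\]
confirms that the constant $(N-m)(N-m-1)$ passes down unchanged.

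\emph{Base case and reverse induction.} There is a base case on the final slice, then a reverse induction through weighted foliations. The only structural difference is the base case. The paper rescales by $e^{2\phi/(k-1)}$ and applies Theorem~B, whereas you need a weighted Llarull theorem. That is exactly where spin (inherited by two-sided hypersurfaces) and $N\le7$ (regularity of the slices) enter.

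The plan is sound, but as written three steps are wrong or incomplete.

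\emph{First, the Lichnerowicz identity.} Your identity is false with respect to $\md V$: a parallel spinor on a flat torus with nonconstant $\psi$ violates it. It holds with respect to $e^{\psi}\md V$. With $u=e^{\psi/2}$ one has $D(us)=u\,D_\psi s$, and
\[
 \int|D_\psi s|^2e^{\psi}
 =\int\Bigl(|\nabla s|^2+\tfrac14\bigl(R-4u^{-1}\Delta u\bigr)|s|^2
 +\langle\mathcal R^Es,s\rangle\Bigr)e^{\psi},
 \qquad R-4u^{-1}\Delta u=R_\psi .
\]
A nonzero $s\in\ker D_\psi$ then forces $\nabla s=0$, $R_\psi\equiv n(n-1)$ and $\sum_{i\ne j}\mu_i\mu_j\equiv n(n-1)$. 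Hence $\Phi_S$ is a local isometry for $n\ge3$, $R\equiv n(n-1)$, and integrating $2\Delta\psi+|\nabla\psi|^2=0$ gives $\psi$ constant. This is how ``round with $\psi_m$ constant'' actually follows. The odd cases $n=3,5$, which do occur, need a separate limiting argument for the equality case.

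\emph{Second, equality does not climb the tower slice by slice.} Once $\Sigma_{i+1}$ is known to be rigid, the slicing identity gives $A=0$, $u_{i+1}$ constant, $\lambda_{i+1}=0$ and $R_{\psi_i}=n(n-1)$ only along $\Sigma_{i+1}$. To say anything about $\Sigma_i\subset\Sigma_{i-1}$, you first need $R_{\psi_i}\equiv n(n-1)$ and $\psi_i$ constant on all of $\Sigma_i$. So Step~3 must be an induction on $m$ whose conclusion includes ``$\psi$ is constant''. At each level you must run the argument from \cref{thm:ball-torus}:
\begin{itemize}
 \item The implicit function theorem gives leaves $\Sigma_t$ of constant weighted mean curvature $\mu(t)$.
 \item By \eqref{eq:foliation-scalar}, a leaf with $\mu'(t_0)>0$ satisfies the hypotheses with a strict gap $2\mu'(t_0)/f_{t_0}$. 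This contradicts the induction hypothesis applied to that leaf, so $\mu'\le0$.
 \item Then $\mu(0)=0$ and weighted minimality make every leaf a minimizer, hence rigid.
\end{itemize}

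\emph{Third, the foliation is only local.} You still need a global continuation: a maximal interval for the normal exponential map, with limits taken as minimizing currents, as in the paper. This gives $\psi$ constant and the product structure on all of $M$, after which completeness yields the universal cover $\bS^{N-m}\times\R^m$.
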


For manifolds with boundary, we obtain the following additional result.

\begin{theo}\label{thm:ball-torus}
Let $2\leq k\leq5$, $m\geq0$, and $N=k+m\leq7$.  Let $(M^N,g)$ be a
compact, connected, oriented Riemannian manifold with nonempty boundary, and
let $\varphi\in C^\infty(M)$.  Suppose that there are smooth maps
\[
 F:\partial M\longrightarrow\bS^{k-1},
 \qquad \Psi:M\longrightarrow\mathbb T^m,
\]
such that $(F,\Psi|_{\partial M})$ has nonzero degree and
\[
 -2\Delta\varphi-|\nabla\varphi|^2+R_g\geq0,
 \qquad
 H_{\partial M}+\langle\nabla\varphi,\eta\rangle\geq k-1,
 \qquad
 |\mathrm dF|\leq1.
\]
Then $\varphi$ is constant.  Moreover, for some flat metric $g_0$ on
$\mathbb T^m$, there is a boundary-preserving local isometry
\[
 \mathcal I:(M,g)\longrightarrow
 (\overline{\bB}^k\times\mathbb T^m,g_{\mathrm{Euc}}+g_0).
\]
Its first component extends $F$.  In particular,
$\widetilde F:=\pi_{\overline{\bB}^k}\circ\mathcal I$ satisfies
$\widetilde F|_{\partial M}=F$.
\end{theo}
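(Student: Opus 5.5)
We prove \cref{thm:ball-torus} by reducing the torus factor with $\mu$-bubbles and then applying the weighted analogue of \hyperref[thm:wwz-fixed]{Theorem~A} on a $k$-dimensional slice. First we absorb the weight. For a hypersurface $\Sigma$ in $M$, the warped functional $\int_\Sigma e^{\varphi}$ has a second variation that produces exactly the quantity $-2\Delta\varphi-|\nabla\varphi|^2+R_g$. The boundary term carries $H_{\partial M}+\langle\nabla\varphi,\eta\rangle$. So the hypotheses are the natural weighted scalar and mean convexity conditions for the warped measure $e^{\varphi}\,d\mathrm{vol}$.

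Write $\Psi=(\psi_1,\dots,\psi_m)$ with $\psi_j:M\to\bS^1$. We perform $m$ successive reductions. At step $j$ we pick a regular value $\theta_j$ of $\psi_j$ on the current slice $M_{j-1}$. In the homology class of $\psi_j^{-1}(\theta_j)$ relative to the boundary, we minimize the weighted area functional
\[
 \Sigma\mapsto\int_\Sigma u_{j-1}\,d\mathcal H^{N-j},
\]
where $u_0=e^{\varphi/2}$ (suitably normalized) and $u_{j-1}$ is the accumulated warping weight. Free boundary (capillary angle $\pi/2$) conditions are imposed along $\partial M_{j-1}$. Because $N\le7$, minimizers are regular in the interior. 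For the free-boundary part, we use the capillary regularity cited before \hyperref[thm:wwz-fixed]{Theorem~A}; the last slices have dimension $k\le5$, which is covered by \cite[Corollary~1.17]{WangZhang}.

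Stability of each slice gives a positive first eigenfunction $v_j$. We set $u_j=u_{j-1}v_j$ and use the standard Schoen--Yau/Gromov warped-product computation. It shows that $M_j$ carries a weight satisfying the same two inequalities with the same constants: the weighted interior quantity stays $\ge0$ and the weighted boundary quantity stays $\ge k-1$. The boundary inequality is preserved because the second fundamental form of $\partial M_{j-1}$ restricted to the free-boundary slice controls $H_{\partial M_j}$, with a nonnegative error. Degree is preserved: $(F,\Psi)|_{\partial M_j}$ restricted to the slice still has nonzero degree onto $\bS^{k-1}\times\mathbb T^{m-j}$.

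After $m$ steps we obtain a compact $k$-manifold $M_m$ with weight $w$. It satisfies $-2\Delta w-|\nabla w|^2+R\ge0$ and $H+\langle\nabla w,\eta\rangle\ge k-1$, and $F|_{\partial M_m}$ has nonzero degree and $|\mathrm dF|\le1$. The next step is a weighted version of \hyperref[thm:wwz-fixed]{Theorem~A}. We prove it by rerunning the Wang--Wang--Zhu capillary $\mu$-bubble argument with the weight inserted; equivalently, for $k\ge3$, we pass to the conformal metric $e^{2w/(k-2)}g$ to turn the weighted inequalities into unweighted ones. That version gives that $w$ is constant and $M_m$ is isometric to $\overline{\bB}^k$.

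Rigidity then propagates back. Equality in every stability inequality forces $v_j$ constant, each slice totally geodesic, $\mathrm{Ric}(\nu,\nu)=0$, and $\nabla\varphi$ tangential and then zero. A foliation argument applies: nearby values $\theta$ give minimizers too, by the constant-weight equality. It yields a local splitting $M\cong M_{j-1}\times$ interval and hence a Riemannian product locally. Assembling the flat directions gives the local isometry to $\overline{\bB}^k\times\mathbb T^m$ with flat $g_0$, compatible with $F$ on the boundary.

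The main obstacle is the preservation of the boundary inequality $H+\langle\nabla\varphi,\eta\rangle\ge k-1$ under the free-boundary reductions. The boundary term in the second variation involves $A_{\partial M}(\nu,\nu)$, not just $H$. We would handle this as Gromov's "$\mu$-bubbles with boundary" arguments do. The weighted mean curvature of $\partial M_j$ equals that of $\partial M_{j-1}$ minus $A_{\partial M}(\nu,\nu)$, plus a derivative of $\log v_j$. The Neumann boundary condition of the stability eigenfunction absorbs $A(\nu,\nu)$. If this fails, the fallback is to require convexity-type control or to use capillary angles adapted to keep the constant $k-1$.
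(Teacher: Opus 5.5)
Your overall architecture matches the paper's: Brendle--Hung free-boundary weighted minimizers, new weight $\varphi|_\Sigma+\log u$, a base case in dimension $k$, then rigidity via foliations. The boundary issue you call the main obstacle is not one. The weight must be $e^{\varphi}$, not $e^{\varphi/2}$. Then $H_{\partial\Sigma}=H_{\partial M}-h_{\partial M}(\nu,\nu)$ together with the Robin condition $\partial_\eta u=h_{\partial M}(\nu,\nu)u$ gives the exact identity $H_{\partial\Sigma}+\langle\nabla_\Sigma\psi,\eta\rangle=H_{\partial M}+\langle\nabla\varphi,\eta\rangle$.

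The first genuine gap is your base case. The conformal factor must be $e^{2w/(k-1)}$. With $e^{2w/(k-2)}$ the scalar curvature is $e^{-2w/(k-2)}\bigl(R-\tfrac{k-1}{k-2}(2\Delta w+|\nabla w|^2)\bigr)$, which your hypothesis does not control. More importantly, the conformal change rescales the boundary metric, so you only get $H^{\widetilde g}\geq(k-1)e^{-w/(k-1)}\geq(k-1)|\mathrm dF|_{\widetilde g}$. Since $w$ need not vanish on $\partial M$, this is the hypothesis of Theorem~B, not Theorem~A. Theorem~B identifies $(M,c\widetilde g)$ with the unit ball only for some $c>0$. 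You then need $R_{\widetilde g}=0$ (the leftover term $\tfrac1{k-1}|\nabla w|^2$) to get $\nabla w=0$, and a radius argument. Here $(M,g)$ is a Euclidean ball of radius $r$ with $F^*g_{\bS^{k-1}}=L^2g_{\partial M}$. Then $H=(k-1)/r\geq k-1$ gives $r\leq1$, while $(Lr)^{k-1}=|\deg F|\geq1$ with $L\leq1$ gives $r\geq1$. The case $k=2$ needs a separate Gauss--Bonnet argument. Also, $k\leq5$ enters only through Theorem~B. The slices are free-boundary, so their regularity for $N\leq7$ does not involve capillary regularity.

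The more serious gap is the rigidity propagation. Varying $\theta$ produces nothing new, since all fibres $\psi_j^{-1}(\theta)$ lie in one relative homology class. At equality the paper obtains $A_\Sigma=0$, $u$ constant, $\nabla\varphi=0$ on $\Sigma$, $\Ric(\nu,\nu)=\Hess\varphi(\nu,\nu)$ and $h_{\partial M}(\nu,\nu)=0$. So the weighted free-boundary Jacobi operator is the Neumann Laplacian, and the implicit function theorem gives a foliation $\Sigma_t$ by free-boundary hypersurfaces of constant weighted mean curvature $\mu(t)$. With $\psi_t=\varphi|_{\Sigma_t}+\log f_t$, the Gauss identity acquires the term $\mu(t)^2+2\mu'(t)/f_t$. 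If $\mu'(t_0)>0$, the lower-dimensional rigidity theorem applied to $(\Sigma_{t_0},\psi_{t_0})$ forces the left side to vanish, a contradiction. Hence $\mu'\leq0$, and the first variation plus minimality of $\Sigma$ make every leaf a rigid minimizer. This applies the full lower-dimensional theorem to leaves other than your chosen slices, so the argument must be an induction on $m$, not one chain of slices traversed backwards.

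Even then, a product near one slice does not define $\mathcal I$ on $M$. You need a continuation argument: take a maximal interval $I$ on which $\Sigma\times I\to M$, $(x,s)\mapsto\exp_x(s\nu)$, is a local isometry. At a finite endpoint the limit current is $q[\![\Gamma]\!]$ with $\Sigma\to\Gamma$ a $q$-sheeted covering, and the foliation argument reapplies to $\Gamma$. This yields a covering $\Sigma\times\R\to M$ and $\widetilde M\cong\overline{\bB}^k\times\R^m$. Then $F$ forces the deck group to act trivially on the ball factor, so $M\cong\overline{\bB}^k\times Q$ with $Q$ compact and flat. Finally, the nonzero degree of $(F,\Psi|_{\partial M})$ gives $b_1(Q)\geq m$, and Bieberbach makes $Q$ a torus. ``Assembling the flat directions'' hides exactly these steps.
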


\begin{rema}
The formally analogous hyperbolic ball--torus statement is false whenever
$k\geq2$ and $m\geq1$; see \cref{prop:hyperbolic-ball-torus-counterexample}.
\end{rema}

\noindent\textbf{Idea of the proof.}
The two-dimensional cases follow from normal-Jacobian comparison,
Gauss--Bonnet, and the Bol--Fiala isoperimetric inequality.  We therefore
describe the argument for $3\leq n\leq5$.

The proof is motivated by the relation between the asymptotically flat
positive mass theorem
\cite{SchoenYauPMT,SchoenYauPMTII,WittenPMT,
BiHaoHeShiZhu,BrendleWangPMT,BrendleWangSpacetime}
and the asymptotically hyperbolic positive mass theorem
\cite{WangAH,ChruscielHerzlich,ACG,HJM}.
Chru\'sciel--Delay observed that the asymptotically hyperbolic Riemannian
positive mass theorem follows from the asymptotically flat spacetime
positive mass theorem \cite[Section~3]{ChruscielDelay}.

We use the same reduction principle for compact manifolds with boundary.
The regularized Jang equation was introduced by Schoen--Yau
\cite[Section~4]{SchoenYauPMTII}.  The existence of smooth regularized
solutions and the boundary estimates follow from
\cite[Lemma~2.2]{EichmairPlateau}; the interior gradient estimate and
geometric Harnack inequality follow from
\cite[Lemmas~2.1 and~2.3]{EichmairPlateau}, with the correction in
\cite[Appendix~A]{EichmairMetzger}; and the local almost-minimizing estimates
and graph--cylinder description follow from
\cite[Propositions~6--7]{Eichmair}.  See Section~3 for details.  Related uses
of regularized Jang graphs appear in \cite{Sakovich,Lundberg,Tsang}.

The hyperbolic theorem is not a formal consequence of
\hyperref[thm:wwz-fixed]{Theorem~A}.  A constant rescaling cannot change
$R_g\geq-n(n-1)$ into nonnegative scalar curvature while preserving the
boundary metric.  Moreover, the hyperbolic boundary relation involves
\[
 \sqrt{H_{\partial M}^2-(n-1)^2}
\]
rather than $H_{\partial M}$ itself.

We set $K=g$ and solve Jang's equation with zero boundary data.  Then
\[
 2\mu=R_g+n(n-1)\geq0,
 \qquad J=0,
 \qquad \tr_{\partial M}K=n-1.
\]
The Schoen--Yau scalar-curvature identity contains the divergence term
$-2\operatorname{div}_{\bar g}X$, which produces a Robin boundary term, and
the boundary identity gives
\[
 \bar H-X(\bar\nu)
 \geq\sqrt{H_{\partial M}^2-(n-1)^2}.
\]
Thus the argument must be carried out at the level of the conformal
quadratic form.  A further difficulty is that the Jang graph may have
cylindrical ends.

To exclude these ends, we first construct a positive solution of the Robin
problem and obtain uniform lower bounds on it by a weighted capacity
estimate.  On compact domains bounded by distant end slices, a small Robin
perturbation gives conformal metrics of positive scalar curvature.  A
relative capillary $\mu$-bubble then carries the nonzero boundary degree into
one lower dimension.  The stability inequality and the projection to the
equatorial sphere contradict Gauss--Bonnet in dimension three and the
Wang--Wang--Zhu extremality theorem in dimensions four and five.  Hence the
Jang graph is compact.

On the compact graph, equality in the quadratic-form identity gives
$X=0$, $\mu=0$, $h=g$, and $R_{\bar g}=0$.
\hyperref[thm:wwz-fixed]{Theorem~A} identifies the Jang graph with a
Euclidean ball.  The equality $h=g$ yields a positive function $U$ such that
\[
 \Hess_gU=Ug,
 \qquad |\nabla U|^2=U^2-1.
\]
Its level sets recover the hyperbolic warped-product metric, and the boundary
radius determines the radius of the ball.  This proves the rigidity statement
and shows that the boundary map extends to an isometry.

For \cref{thm:ball-torus}, successive free-boundary weighted minimizers
remove the torus directions.  By \hyperref[thm:wwz-listing]{Theorem~B}, the
final slice is a Euclidean ball.  Weighted free-boundary foliations then
propagate equality through the preceding slices.

\noindent\textbf{Organization of this paper.}
Section~2 introduces capillary $\mu$-bubbles and the remaining preliminaries.
Section~3 proves the hyperbolic rigidity theorems for $3\leq n\leq5$.
Section~4 proves \cref{thm:ball-torus}.  Appendix~A gives the direct
two-dimensional proofs, and Appendix~B contains the hyperbolic warped-torus
counterexample.

\noindent\textbf{Acknowledgments.}
We thank Professors Guofang Wang and Chao Xia for introducing us to the study of
capillary hypersurfaces, for many helpful discussions, and for
his continued guidance and support.

\noindent\textbf{Disclosure on AI assistance.}
The author developed the ideas for the main theorems.
We used ChatGPT 5.6 sol auxiliary tools.
We had expected the hyperbolic analogue of
\cref{thm:ball-torus} to hold and had made several attempts
to prove it. The counterexample in
Appendix~\ref{app:warped-torus} was found by ChatGPT 5.6 sol.The author takes full
responsibility for the content of this paper.

\noindent\textbf{Related work.}
In September 2026, Wang--Wang--Xie--Zhu
\cite[Theorem~1.4]{WWXZ} developed a capillary version of
the singular dimension-descent method of Bi--Zhu \cite{BiZhu}
and used it to establish the all-dimensional extension of
the Euclidean scalar--mean rigidity theorem recalled as
\hyperref[thm:wwz-fixed]{Theorem~A}.
We then observed that combining their Euclidean theorem with
the Jang-equation blow-up argument developed here extends
the hyperbolic rigidity argument to dimensions $n\leq7$.
More recently, Jiang--Wang \cite{JiangWang} established
the conclusions of our \cref{thm:main,cor:radius}
in all dimensions $n\geq3$, using a similar Jang-equation reduction
together with a blow-up analysis of singular Jang graphs.
Our work was carried out independently of theirs.

\section{Preliminaries}\label{sec:inputs}

\noindent\textbf{Capillary $\mu$-bubbles.}
We use the variational formulas in \cite[Appendix~A]{WWZ} and the normal
convention of Wang--Xia \cite[Section~2]{WangXia}.  Let $(M^m,g)$ be oriented
with boundary $\partial M$, and let $\Omega\subset M$ be a finite-perimeter
set whose reduced boundary is smooth near $\partial M$.  Write
\[
 Y=\overline{\partial^*\Omega\cap\mathring M},
 \qquad Z=\partial Y\subset\partial M.
\]
For $h\in C^\infty(M)$ and $\beta\in C^\infty(\partial M)$ with
$|\beta|\leq1$, define the capillary $\mu$-bubble functional
\begin{equation}\label{eq:capillary-prelim-functional}
 \mathcal A_{h,\beta}(\Omega)
 =P(\Omega;\mathring M)-\int_\Omega h\,\mathrm dV
  -\int_{\partial^*\Omega\cap\partial M}\beta\,\mathrm dA.
\end{equation}
A minimizer of \eqref{eq:capillary-prelim-functional} is called a capillary
$\mu$-bubble.  At a smooth critical point, the first variation gives
\[
 H_Y=h,
 \qquad \cos J=\beta\quad\text{on }Z,
\]
where $J\in(0,\pi)$ is the contact angle.  If $\Omega$ is minimizing and
$|\beta|<1$ along $Z$, then
\begin{equation}\label{eq:capillary-stability}
\begin{split}
 Q_{h,\beta}(\phi,\psi)={}&
 \int_Y\left(\langle\nabla\phi,\nabla\psi\rangle
 -\bigl(\Ric_g(\nu,\nu)+|A_Y|^2+\partial_\nu h\bigr)\phi\psi\right)\\
 &+\int_Z\left(H_Z-\frac{H_{\partial M}}{\sin J}
 -\cot J\,H_Y
 -\frac{\partial_{\bar\nu}\beta}{\sin^2J}\right)\phi\psi.
\end{split}
\end{equation}
Stability means that $Q_{h,\beta}(\phi,\phi)\geq0$ for every
$\phi\in C^\infty(Y)$.

Here $\nu$ is the outward unit normal of $\Omega$ along $Y$, and $\bar N$
is the outward unit normal of $\partial M$ in $M$.  Along $Z$, $\mu$ is the
outward unit conormal in $Y$, while $\bar\nu$ is the unit normal in
$\partial M$ chosen so that $\{\nu,\mu\}$ and $\{\bar\nu,\bar N\}$ have the
same orientation.  The curvatures $H_Y$, $H_Z$, and $H_{\partial M}$ are
computed with respect to $\nu$, $\mu$, and $\bar N$, respectively.  These
normal fields satisfy
\begin{equation}\label{eq:capillary-angle}
 \cos J=-\langle\nu,\bar N\rangle=\langle\mu,\bar\nu\rangle,
 \qquad
 \mu=\sin J\,\bar N+\cos J\,\bar\nu,
 \qquad
 \nu=-\cos J\,\bar N+\sin J\,\bar\nu.
\end{equation}

\begin{figure}[H]
\centering
\begin{tikzpicture}[
 scale=0.92,
 >={Latex[length=2.8mm,width=2mm]},
 normal/.style={-{Latex[length=2.8mm,width=2mm]},very thick}
]
 \begin{scope}[xshift=-0.75cm]
 \coordinate (q) at (-3.35,0.05);
 \coordinate (p) at (2.05,0.18);
 \path[fill=orange!58]
   (-0.8,2.35)
   .. controls (-2.15,2.35) and (-3.25,1.35) .. (q)
   .. controls (-3.45,-1.25) and (-2.25,-2.25) .. (-0.65,-2.38)
   .. controls (0.85,-2.52) and (2.15,-1.55) .. (p)
   .. controls (2.0,1.25) and (0.85,2.3) .. (-0.8,2.35)
   -- cycle;
 \begin{scope}
   \clip
     (-0.8,2.35)
     .. controls (-2.15,2.35) and (-3.25,1.35) .. (q)
     .. controls (-3.45,-1.25) and (-2.25,-2.25) .. (-0.65,-2.38)
     .. controls (0.85,-2.52) and (2.15,-1.55) .. (p)
     .. controls (2.0,1.25) and (0.85,2.3) .. (-0.8,2.35)
     -- cycle;
   \path[fill=cyan!35]
     (q).. controls (-2.05,1.08) and (0.72,1.08) .. (p)
     -- (2.8,3)--(-4,3)--cycle;
   \fill[white,opacity=0.18] (-1.75,1.35) ellipse[x radius=1.0,y radius=0.55];
 \end{scope}
 \draw[very thick]
   (-0.8,2.35)
   .. controls (-2.15,2.35) and (-3.25,1.35) .. (q)
   .. controls (-3.45,-1.25) and (-2.25,-2.25) .. (-0.65,-2.38)
   .. controls (0.85,-2.52) and (2.15,-1.55) .. (p)
   .. controls (2.0,1.25) and (0.85,2.3) .. (-0.8,2.35);
 \draw[densely dashed,cyan!55!black]
   (q).. controls (-2.05,1.08) and (0.72,1.08) .. (p);
 \draw[densely dashed,cyan!55!black]
   (-0.8,2.3).. controls (-1.75,1.15) and (-1.65,-1.15) .. (-0.65,-2.33);

 \path[fill=green!52,opacity=0.9]
   (q).. controls (-2.05,1.08) and (0.72,1.08) .. (p)
      .. controls (0.75,-0.72) and (-2.15,-0.72) .. (q)--cycle;
 \draw[violet!85!black,densely dashed,very thick]
   (q).. controls (-2.05,1.08) and (0.72,1.08) .. (p);
 \draw[violet!85!black,very thick]
   (p).. controls (0.75,-0.72) and (-2.15,-0.72) .. (q);
 \node[green!25!black] at (-0.55,0.12) {$Y$};
 \node[violet!85!black] at (-0.85,-0.72)
   {$Z=\partial Y\subset\partial M$};
 \fill (p) circle (2.1pt) node[below left=2pt] {$p$};
 \node[cyan!35!black] at (-1.05,1.55) {$M\setminus\overline\Omega$};
 \node[orange!55!black] at (-0.95,-1.48) {$\Omega$};
 \node at (1.1,1.8) {$\partial M$};

 \draw[gray!65,thick]
   ($(p)+(90:0.78)$)--($(p)+(-90:0.78)$);
 \draw[normal,blue!75!black] (p)--($(p)+(0:0.9)$)
   node[pos=0.62,below=3pt] {$\bar N$};
 \draw[normal,teal!80!black] (p)--($(p)+(90:1.05)$)
   node[above] {$\bar\nu$};
 \draw[normal,violet!85!black] (p)--($(p)+(35:1.08)$)
   node[above right] {$\mu$};
 \draw[normal,red!80!black] (p)--($(p)+(125:1.08)$)
   node[above left] {$\nu$};
 \draw[thick]
   ($(p)+(35:0.42)$) arc[start angle=35,end angle=90,radius=0.42];
 \node at ($(p)+(62.5:0.64)$) {$J$};
 \end{scope}

 \draw[very thick,blue!75!black] (3.1,1.45)--(3.55,1.45);
 \node[anchor=west,text width=5.0cm,align=left,font=\small] at (3.65,1.45)
   {$\bar N$: outward unit normal of $\partial M$ in $M$.};
 \draw[very thick,teal!80!black] (3.1,0.58)--(3.55,0.58);
 \node[anchor=west,text width=5.0cm,align=left,font=\small] at (3.65,0.58)
   {$\bar\nu$: unit normal of $Z$ in $\partial M$.};
 \draw[very thick,violet!85!black] (3.1,-0.29)--(3.55,-0.29);
 \node[anchor=west,text width=5.0cm,align=left,font=\small] at (3.65,-0.29)
   {$\mu$: outward unit conormal of $Z$ in $Y$.};
 \draw[very thick,red!80!black] (3.1,-1.16)--(3.55,-1.16);
 \node[anchor=west,text width=5.0cm,align=left,font=\small] at (3.65,-1.16)
   {$\nu$: outward unit normal of $\Omega$ along $Y$.};
\end{tikzpicture}
\caption{A capillary hypersurface in a compact three-dimensional ball-type
domain.  The colors distinguish $\Omega$, $M\setminus\overline\Omega$, and
$Y$.  Here $Z=\partial Y\subset\partial M$, and the four vectors at
$p\in Z$ satisfy \eqref{eq:capillary-angle}.}
\label{fig:capillary-normals}
\end{figure}

\section{The Jang equation and higher-dimensional rigidity}
\label{sec:jang-higher}

For a Riemannian manifold $(M,g)$, a symmetric $2$-tensor $K$, and
$u\in C^2(M)$, define the Jang operator by
\begin{equation*}
 \mathcal J(u)
 =\left(g^{ij}-\frac{\nabla^iu\nabla^ju}{1+|\nabla u|_g^2}\right)
  \left(\frac{\nabla^2_{ij}u}{\sqrt{1+|\nabla u|_g^2}}-K_{ij}\right).
\end{equation*}
The regularized Dirichlet problem is
\begin{equation}\label{eq:jang-regularized}
 \mathcal J(z_\tau)=\tau z_\tau\quad\text{in }M,
 \qquad z_\tau=0\quad\text{on }\partial M,
 \qquad 0<\tau\leq1.
\end{equation}

For $n\geq3$, define
\begin{equation*}
 c_n=\frac{4(n-1)}{n-2}.
\end{equation*}
For the initial data $(g,K)$, denote the energy and momentum densities by
\begin{equation*}
 2\mu=R_g+(\tr_gK)^2-|K|_g^2,
 \qquad
 J=\operatorname{div}_g\bigl(K-(\tr_gK)g\bigr).
\end{equation*}

\begin{prop}\label{prop:jang}
Let $(M^n,g)$, $3\leq n\leq5$, be compact and connected, and let
$K\in C^\infty(\operatorname{Sym}^2T^*M)$.  Suppose that $\partial M=\Sigma$
is connected and $H_\Sigma>|\tr_\Sigma K|$.  For some $\tau_i\downarrow0$,
the regularized solutions $z_{\tau_i}$ converge smoothly on compact subsets
of an open set $U$ to a solution $z$ of the Jang equation.  Here $U$ is the
component of the complement of the blow-up and blow-down regions that
contains a collar of $\Sigma$.  Denote the graph of $z$ by $G$; then
$z|_\Sigma=0$ and $\mathcal J(z)=0$.
Denote $\Gamma_{\mathrm{int}}=\partial U\cap\mathring M$.  If
$\Gamma_{\mathrm{int}}=\varnothing$, then $U=M$ and $G$ is compact.
Otherwise, $\Gamma_{\mathrm{int}}$ is a finite disjoint union of smooth
closed hypersurfaces, and the corresponding ends of $G$ are complete and
smoothly asymptotic to cylinders.

Write $\bar g=g+\mathrm dz^2$, $w=\sqrt{1+|\nabla z|_g^2}$,
$v=\nabla z/w$, and $h_{ij}=w^{-1}\nabla^2_{ij}z$.  Let the one-form $X$ on
$G$ be given by
\begin{equation}\label{eq:X-definition}
 X_i=(h_{ij}-K_{ij})v^j.
\end{equation}
Then
\begin{equation}\label{eq:jang-scalar}
 R_{\bar g}
 =2(\mu-J(v))+|h-K|_{\bar g}^2+2|X|_{\bar g}^2
  -2\operatorname{div}_{\bar g}X.
\end{equation}
Along $\Sigma$,
\begin{equation*}
 \bar H-X(\bar\nu)
 \geq\sqrt{H_\Sigma^2-(\tr_\Sigma K)^2}.
\end{equation*}
Here $\bar\nu$ is the outward $\bar g$-unit normal.
\end{prop}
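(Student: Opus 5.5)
The proof has three parts: the existence and convergence statement, the Schoen--Yau scalar identity \eqref{eq:jang-scalar}, and the boundary inequality. Most of it is assembled from the literature cited in the introduction.

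\textbf{Existence and convergence.} For each $\tau\in(0,1]$, I would obtain a smooth solution $z_\tau$ of \eqref{eq:jang-regularized} by the continuity method, as in \cite[Lemma~2.2]{EichmairPlateau}. The a priori bound $\tau|z_\tau|\le C$ follows from the maximum principle. The boundary gradient estimate uses barriers $\pm\phi(d_\Sigma)$ built from the distance to $\Sigma$. This is exactly where $H_\Sigma>|\tr_\Sigma K|$ enters: it makes the inner and outer barriers sub- and supersolutions near $\Sigma$, uniformly in $\tau$, so $|\nabla z_\tau|\le C$ on a fixed collar of $\Sigma$.

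Next I would apply the interior gradient estimate and the geometric Harnack inequality \cite[Lemmas~2.1, 2.3]{EichmairPlateau}, with the correction in \cite[Appendix~A]{EichmairMetzger}. These show that the graphs of $z_\tau$ have locally bounded second fundamental form. Hence they subconverge smoothly to a limit $\bar G\subset M\times\R$. The limit splits into components that are either graphs over open sets, or vertical cylinders $\Gamma\times\R$ over hypersurfaces $\Gamma$ where $z_{\tau_i}\to\pm\infty$.

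Let $U$ be the component of the graphical region containing the collar. On $U$, $z_{\tau_i}$ converges smoothly to $z$, with $z|_\Sigma=0$ and $\mathcal J(z)=0$ because $\tau_iz_{\tau_i}\to0$ locally. Since $3\le n\le5$, the local almost-minimizing property \cite[Propositions~6--7]{Eichmair} together with regularity of almost-minimizers in dimension $\le7$ gives two things: $\partial U\cap\mathring M$ is a finite union of smooth closed hypersurfaces, and $G$ is asymptotic to $\Gamma_{\mathrm{int}}\times\R$. Finiteness comes from the area bounds. If $\Gamma_{\mathrm{int}}=\varnothing$, then $\partial U=\Sigma$, connectedness of $M$ gives $U=M$, and $z$ is bounded, so $G$ is compact.

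\textbf{Scalar identity.} This is the Schoen--Yau computation \cite[Section~4]{SchoenYauPMTII}, which I would simply reproduce. The Gauss equation for $G\subset(M\times\R,g+\md t^2)$, whose second fundamental form is $h$, gives
\[
R_{\bar g}=R_g-2\Ric(\bar n,\bar n)+(\tr h)^2-|h|^2 .
\]
The Codazzi equation rewrites $\Ric(\bar n,\bar n)$. One then substitutes $\tr_{\bar g}h=\tr_{\bar g}K$ (the Jang equation) and completes squares using $X$. The divergence terms produced by Codazzi assemble into $-2\operatorname{div}_{\bar g}X$ and $-2J(v)$. This step is routine but heavy on sign bookkeeping.

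\textbf{Boundary inequality (main obstacle).} Along $\Sigma$ we have $z=0$, so $\nabla z=\partial_N z\,N$. Write $s=\partial_Nz$ and $w=\sqrt{1+s^2}$. Then $\bar\nu=(N+s\,\partial_t)/w$ up to orientation, and $v=sN/w$.

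The mean curvature of $\Sigma$ in $G$ is computed as follows. Since $\Sigma\subset M\times\{0\}$ and $G$ makes angle $\theta$ with $\cos\theta=1/w$, we get $\bar H=H_\Sigma/w$ plus a correction involving $h(\bar\nu,\bar\nu)$. On the other hand, the Jang equation on $\Sigma$ reads
\[
w^{-2}\nabla^2_{NN}z/w+\tr_\Sigma h=\tr_\Sigma K+K(\bar\nu,\bar\nu)\,\text{-terms},
\]
and on $\Sigma$ we have $\tr_\Sigma h=sH_\Sigma/w$. Combining these with
\[
X(\bar\nu)=(h-K)(v,\bar\nu),
\]
the $h_{NN}$ terms should cancel, leaving
\[
\bar H-X(\bar\nu)=\frac{H_\Sigma-s\,\tr_\Sigma K}{w}
\]
up to sign conventions.

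The final step is Cauchy--Schwarz on the vectors $(1,s)$ and $(H_\Sigma,-\tr_\Sigma K)$, which gives
\[
\frac{H_\Sigma-s\,\tr_\Sigma K}{\sqrt{1+s^2}}\ge\sqrt{H_\Sigma^2-(\tr_\Sigma K)^2},
\]
minimized over $s$ and using $H_\Sigma>|\tr_\Sigma K|$. I expect the main difficulty to be getting the signs and orientations exactly right, so that the $h(\bar\nu,\bar\nu)$ terms cancel.
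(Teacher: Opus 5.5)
Your treatment of existence, convergence, and the Schoen--Yau identity matches the paper's argument and citations. The boundary inequality, which you single out as the main obstacle, does not go through as written, and the problem is not a sign convention.

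\textbf{The boundary identity.} There is no $h(\bar\nu,\bar\nu)$ correction in $\bar H$. Since $\Sigma$ lies in the totally geodesic slice $M\times\{0\}$ and $\partial_t$ is parallel, $\bar H=H_\Sigma/w$ exactly. The normal second-derivative term enters only through $X(\bar\nu)$. The base projection of $\bar\nu$ is $N/w$ and $v=\frac sw N$, so
\[
 X(\bar\nu)=\frac{s}{w^2}(h-K)(N,N).
\]
You eliminate this term with the Jang equation on $\Sigma$,
\[
 \tr_\Sigma(h-K)+w^{-2}(h-K)(N,N)=0,
\]
together with $\tr_\Sigma h=\frac sw H_\Sigma$, which holds because $z|_\Sigma=0$. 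This gives
\[
 X(\bar\nu)=-s\,\tr_\Sigma(h-K)=-\frac{s^2}{w}H_\Sigma+sP,
 \qquad P=\tr_\Sigma K,
\]
and therefore $\bar H-X(\bar\nu)=wH_\Sigma-sP$, not $(H_\Sigma-sP)/w$. No choice of signs rescues your formula: already for $K=0$ it would give $H_\Sigma/w<H_\Sigma$ whenever $s\ne0$, contradicting the very inequality you want.

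\textbf{The final inequality.} The inequality you invoke,
\[
 (H_\Sigma-sP)/\sqrt{1+s^2}\geq\sqrt{H_\Sigma^2-P^2},
\]
is false; take $P=0$ and $s\ne0$. Euclidean Cauchy--Schwarz bounds $|H_\Sigma-sP|$ from above, not from below. What you need is the reverse Cauchy--Schwarz inequality for the Lorentzian pairing. Since $w^2-s^2=1$,
\[
 (wH_\Sigma-sP)^2-(H_\Sigma^2-P^2)=(sH_\Sigma-wP)^2\geq0.
\]
Moreover $wH_\Sigma-sP>0$, because $w>|s|$ and $H_\Sigma>|P|$. Taking square roots gives the claim, and the bound is sharp, attained at $s=P/\sqrt{H_\Sigma^2-P^2}$. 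With these two corrections your outline coincides with the paper's proof.
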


\begin{figure}[t]
 \centering
 \includegraphics[width=\textwidth]{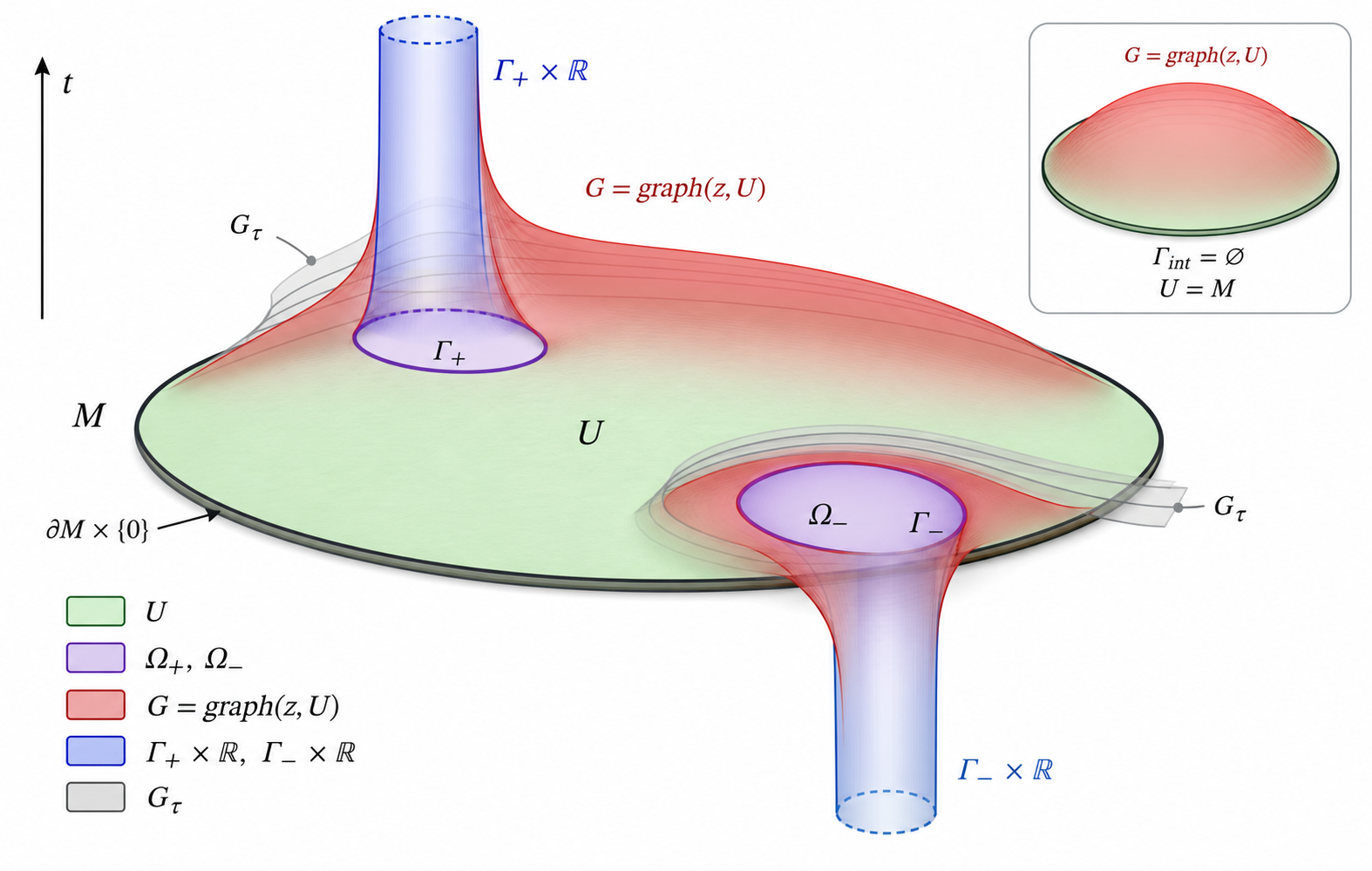}
 \caption{The graph--cylinder alternative in \cref{prop:jang}.  The limiting
 graph is defined over the green region $U$, and its ends over
 $\Gamma_\pm$ are asymptotic to $\Gamma_\pm\times\mathbb R$.}
 \label{fig:jang-graph}
\end{figure}

\begin{proof}
Extend $g$ and $K$ smoothly across $\Sigma$ and regard $M$ as a bounded
domain in a manifold without boundary.  Applied with $p=-K$ and zero
boundary value, \cite[Lemma~2.2]{EichmairPlateau} gives a smooth solution of
\eqref{eq:jang-regularized}.  The maximum principle gives
\[
 \tau\|z_\tau\|_{L^\infty(M)}
 \leq n\|K\|_{C^0(M)}.
\]
Since $H_\Sigma-|\tr_\Sigma K|$ has a positive minimum, the barriers in
\cite[Sections~3.2 and~3.4]{AEM} and boundary elliptic estimates give uniform
$C^k$ bounds, for every $k\geq0$, on a fixed collar of $\Sigma$.

Extend $K$ to $M\times\mathbb R$ by vertical translation and set its
$\partial_t$ components equal to zero.  For
$G_\tau=\operatorname{graph}(z_\tau)$,
\[
 |H_{G_\tau}|
 \leq|\tr_{G_\tau}K|+\tau|z_\tau|\leq C.
\]
By \cite[Propositions~6--7]{Eichmair}, together with
\cite[Lemmas~2.1--2.3]{EichmairPlateau} and the correction in
\cite[Lemma~A.1 and Remark~A.2]{EichmairMetzger}, a subsequence has a
geometric limit whose connected components are graphs or vertical
cylinders.  The collar estimates give smooth graphical convergence near
$\Sigma\times\{0\}$.

Write
\[
 \Omega_\pm=\{x\in\mathring M:z_{\tau_i}\to\pm\infty
      \text{ locally uniformly near }x\},
 \Omega_0=M\setminus
  \bigl(\overline{\Omega_+}\cup\overline{\Omega_-}\bigr).
\]
Let $U$ be the component of $\Omega_0$ containing a collar of $\Sigma$.  Then
\[
 z_{\tau_i}\longrightarrow z
 \quad\text{smoothly on compact subsets of }U,
 \qquad \mathcal J(z)=0,
 \qquad z|_\Sigma=0.
\]
Moreover, \cite[Proposition~7(c)]{Eichmair} shows that
$\Gamma_{\mathrm{int}}=\partial U\cap\mathring M$ is a finite disjoint union
of smooth closed hypersurfaces, $z$ tends to $+\infty$ or $-\infty$ along
each corresponding end, and vertical translates of the graph converge
smoothly to $\Gamma\times\mathbb R$ for each component $\Gamma$.  The ends
are complete, since the length of a curve is at least the variation of its
height.  If
$\Gamma_{\mathrm{int}}=\varnothing$, connectedness gives $U=M$.

On $G$, choose the unit normal $(\nabla z-\partial_t)/w$.  With the
conventions above, the Schoen--Yau identity
\cite[Equation~(10)]{Eichmair} is precisely
\eqref{eq:jang-scalar}.

For the boundary estimate, let $\nu$ be the outward $g$-unit normal of
$\Sigma$ and write
\[
 p=\partial_\nu z,
 \qquad P=\tr_\Sigma K.
\]
Since $z|_\Sigma=0$,
\[
 w=(1+p^2)^{1/2},
 \qquad
 v=\frac p w\nu,
 \qquad
 \bar\nu=\frac{\nu+p\partial_t}{w},
 \qquad \bar H=\frac{H_\Sigma}{w},
 \qquad
 \tr_\Sigma h=\frac{p}{w}H_\Sigma.
\]
The Jang equation along $\Sigma$ gives
\begin{equation}\label{eq:boundary-jang-split}
 0=\tr_\Sigma(h-K)+\frac1{w^2}(h-K)(\nu,\nu).
\end{equation}
Since the base projection of $\bar\nu$ is $\nu/w$,
\eqref{eq:X-definition} and \eqref{eq:boundary-jang-split} give
\begin{align*}
 X(\bar\nu)
 &=\frac{p}{w^2}(h-K)(\nu,\nu)\\
 &=-p\,\tr_\Sigma(h-K)
 =-\frac{p^2}{w}H_\Sigma+pP.
\end{align*}
Hence
\begin{equation}\label{eq:boundary-exact}
 \bar H-X(\bar\nu)=wH_\Sigma-pP.
\end{equation}
Finally,
\[
 (wH_\Sigma-pP)^2-(H_\Sigma^2-P^2)
 =(pH_\Sigma-wP)^2\geq0.
\]
Since $H_\Sigma>|P|$ and $w>|p|$, \eqref{eq:boundary-exact} is positive;
taking square roots completes the proof.
\end{proof}

\label{sec:jang-form}\label{sec:compactness}
Set $K=g$.  Then
\begin{equation}\label{eq:constraints}
 2\mu=R_g+(\tr_gK)^2-|K|_g^2
 =R_g+n(n-1)\geq0,
 \qquad J=0.
\end{equation}
On the Jang graph define
\begin{equation*}
 L=-c_n\bar\Delta+R_{\bar g},
 \qquad
 B=\frac{c_n}{2}\partial_{\bar\nu}+X(\bar\nu).
\end{equation*}
For compactly supported smooth $\phi$ and $\psi$, whose supports may meet
$\Sigma$, let
\begin{equation}\label{eq:q}
 Q(\phi,\psi)
 =\int_G\bigl(c_n\langle\bar\nabla\phi,\bar\nabla\psi\rangle
               +R_{\bar g}\phi\psi\bigr)
 +2\int_\Sigma X(\bar\nu)\phi\psi.
\end{equation}

\begin{lemm}\label{lem:q}
For every admissible $\phi$,
\begin{equation}\label{eq:q-exact}
\begin{split}
 Q(\phi,\phi)={}&\frac{2n}{n-2}\int_G|\bar\nabla\phi|^2
 +2\int_G|\bar\nabla\phi+\phi X|^2\\
 &+\int_G\bigl(2\mu+|h-g|^2\bigr)\phi^2\geq0.
\end{split}
\end{equation}
\end{lemm}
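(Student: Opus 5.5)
The plan is a direct computation: substitute the Schoen--Yau identity \eqref{eq:jang-scalar} into the definition \eqref{eq:q}, integrate the divergence term by parts, and complete a square. With $K=g$, \eqref{eq:constraints} gives $J=0$. Then \eqref{eq:jang-scalar} from \cref{prop:jang} becomes
\[
 R_{\bar g}=2\mu+|h-g|_{\bar g}^2+2|X|_{\bar g}^2-2\operatorname{div}_{\bar g}X .
\]
Here $|h-g|$ denotes $|h-K|_{\bar g}$ with $K=g$ extended vertically with zero $\partial_t$ components, exactly as in the proof of \cref{prop:jang}. I would insert this into the bulk term $\int_G R_{\bar g}\phi^2$ of $Q(\phi,\phi)$.

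The only term needing care is $-2\int_G\phi^2\operatorname{div}_{\bar g}X$. An admissible $\phi$ is smooth with compact support in $G$. Its support may meet $\Sigma$, but it stays away from the cylindrical ends of $G$ described in \cref{prop:jang}, so the only boundary contribution comes from $\Sigma$. By the divergence theorem on $G$, with outward unit normal $\bar\nu$ along $\Sigma$,
\[
 -2\int_G\phi^2\operatorname{div}_{\bar g}X
 =4\int_G\phi\,\langle X,\bar\nabla\phi\rangle
 -2\int_\Sigma X(\bar\nu)\phi^2 .
\]
The boundary integral cancels exactly the Robin term $2\int_\Sigma X(\bar\nu)\phi^2$ in \eqref{eq:q}. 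This cancellation is the reason $Q$ was defined with that boundary term. It requires matching the sign conventions for $\bar\nu$ and $X(\bar\nu)$ with those in \cref{prop:jang}, and I expect this to be the only place where an error could enter.

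It then remains to complete the square:
\[
 2|X|^2\phi^2+4\phi\langle X,\bar\nabla\phi\rangle
 =2|\bar\nabla\phi+\phi X|^2-2|\bar\nabla\phi|^2 .
\]
This leaves the gradient term with coefficient $c_n-2=\frac{4(n-1)-2(n-2)}{n-2}=\frac{2n}{n-2}$, which gives \eqref{eq:q-exact}. Nonnegativity then follows term by term. For $n\ge3$ we have $\frac{2n}{n-2}>0$, the squared norms are nonnegative, and $2\mu=R_g+n(n-1)\ge0$ by the hypothesis $R_g\ge-n(n-1)$ via \eqref{eq:constraints}.
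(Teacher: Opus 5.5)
Your proposal is correct and is essentially the paper's own proof. Like the paper, you substitute \eqref{eq:jang-scalar} with $J=0$, integrate $-2\phi^2\operatorname{div}_{\bar g}X$ by parts so that the $\Sigma$-term cancels the Robin term $2\int_\Sigma X(\bar\nu)\phi^2$, and complete the square using $c_n-2=2n/(n-2)$.
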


\begin{proof}
Insert \eqref{eq:jang-scalar} and \eqref{eq:constraints} into \eqref{eq:q}.
Integration by parts cancels the boundary contribution from
$-2\operatorname{div}X$ with
$2\int_\Sigma X(\bar\nu)\phi^2$.  Since
\[
 c_n|\nabla\phi|^2+2|X|^2\phi^2
 +4\phi\langle X,\nabla\phi\rangle
 =(c_n-2)|\nabla\phi|^2+2|\nabla\phi+\phi X|^2,
\]
and $c_n-2=2n/(n-2)$, the formula follows.
\end{proof}

For \cref{thm:main}, the boundary estimate is
\begin{equation}\label{eq:outer-fixed}
 \bar H-X(\bar\nu)
 \geq\sqrt{H_\Sigma^2-(n-1)^2}
 \geq\frac{n-1}{\sinh\rho}.
\end{equation}
Moreover, $\bar g_\Sigma=g_\Sigma$ because $z=0$ on $\Sigma$.

We next prove that the Jang graph is compact.

\begin{defi}\label{def:criticality}
Let $M$ be a connected manifold with compact boundary, and let $Q$ be the
symmetric bilinear form of a Robin pair $(\mathcal L,\mathcal B)$ on $M$.
Assume that $Q(\phi,\phi)\geq0$ for every $\phi\in C_c^\infty(M)$.  Here
$C_c^\infty(M)$ denotes the compactly supported functions that are smooth up
to $\partial M$; their supports may meet $\partial M$.
We say that a sequence $\{\xi_k\}\subset C_c^\infty(M)$ is a
\emph{null sequence} for $Q$ if $\xi_k\geq0$ and there exists a nonempty
open set $O\Subset\mathring M$ such that
\begin{equation}\label{eq:null-sequence}
 Q(\xi_k,\xi_k)\longrightarrow0,
 \qquad
 \int_O\xi_k^2=1\quad\text{for all }k.
\end{equation}
We say that $(\mathcal L,\mathcal B)$ is \emph{critical} if $Q$ admits a
null sequence, and \emph{subcritical} otherwise; see
\cite[Definition~6.3 and Theorem~6.4]{PinchoverVersano}.
\end{defi}

\begin{lemm}\label{lem:subcritical}
Assume $K=g$ and $R_g\geq-n(n-1)$.  If $G$ has a cylindrical Jang end,
then the pair $(L,B)$ is subcritical.
\end{lemm}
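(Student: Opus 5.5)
Assume, for contradiction, that $(L,B)$ is critical, and let $\{\xi_k\}\subset C_c^\infty(G)$ be a null sequence with $\int_O\xi_k^2=1$ on some $O\Subset\mathring G$. The plan is to show that the positive terms in \cref{lem:q} force $\xi_k$ to concentrate on the region where $Q$ carries no weight, and that the cylindrical end makes this impossible.

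\textbf{Step 1: $\xi_k$ becomes locally constant.} From \eqref{eq:q-exact},
\[
 \frac{2n}{n-2}\int_G|\bar\nabla\xi_k|^2\leq Q(\xi_k,\xi_k)\to0 .
\]
On any fixed compact connected domain $D\subset G$ with $O\subset D$, a Poincaré inequality therefore shows that $\xi_k$ is close in $L^2(D)$ to a constant $c_k$. The normalization on $O$ gives $c_k^2|O|\to1$. Exhausting $G$ by such domains $D$ and passing to a diagonal subsequence, we get $\xi_k\to c$ in $L^2_{\mathrm{loc}}(G)$, where $c=|O|^{-1/2}>0$.

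\textbf{Step 2: the boundary-weighted term.} We do not need positivity of $X(\bar\nu)$ on $\Sigma$; \cref{lem:q} already absorbs the Robin term. Instead we use the exact identity. The term $2\int_G|\bar\nabla\xi_k+\xi_kX|^2$ tends to zero. Together with $\bar\nabla\xi_k\to0$ in $L^2_{\mathrm{loc}}$, this gives $\int_D\xi_k^2|X|^2\to0$, hence $c^2\int_D|X|^2=0$. Since $D$ is arbitrary, $X\equiv0$ on $G$. By the same argument, the term $\int_G(2\mu+|h-g|^2)\xi_k^2\to0$ gives $\mu\equiv0$ and $h\equiv g$ on all of $G$.

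\textbf{Step 3: contradiction on the cylindrical end (the main obstacle).} Now $h=g$ everywhere, i.e. $\nabla^2z=w\,g$ holds pointwise over $U$. Along a cylindrical end over a component $\Gamma\subset\Gamma_{\mathrm{int}}$, \cref{prop:jang} says the vertical translates of $G$ converge smoothly to $\Gamma\times\R$. On that cylinder the second fundamental form of the graph in $M\times\R$ tends to that of $\Gamma\times\R$, which has no vertical component. The tensor $h$ is, up to identification, the second fundamental form of $G$ restricted to horizontal directions, with $h(v,v)$ controlling the vertical--vertical entry. Since $|v|\to1$ along the end, $h(v,v)\to0$, whereas $g(v,v)=|v|^2\to1$.

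This contradicts $h=g$. The delicate part is matching the conventions: $h_{ij}=w^{-1}\nabla^2_{ij}z$ is a tensor on $M$, and its $\bar g$-norm must be compared with the second fundamental form of $G$ via $v$. I would verify this with the standard formula $A_G=h$ in the $\bar g$-orthonormal frame, and then use smooth convergence to the cylinder, along which $\nabla z/|\nabla z|\to\nu_\Gamma$.

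If this comparison turns out to be awkward, the alternative is to use $\mu\equiv0$ and $X\equiv0$ in \eqref{eq:jang-scalar}, which give $R_{\bar g}=|h-g|^2_{\bar g}$ identically. The limit cylinder $\Gamma\times\R$ has $R=R_\Gamma$, while $|h-g|^2\to|A_\Gamma-g_\Gamma|^2+1\geq1$, and deriving the contradiction from $h\equiv g$ in the same way.

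Either route shows that a null sequence cannot exist, so $(L,B)$ is subcritical.
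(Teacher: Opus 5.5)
\textbf{Steps 1 and 2 are fine and match the paper.} Poincar\'e on an exhaustion gives a positive constant limit, hence $X=0$, $\mu=0$ and $h=g$ on $G$.

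\textbf{Step 3 has a genuine gap.} Both routes you offer fail for the same reason: you compare $h$ and $g$ in the nearly vertical direction without accounting for the degeneration of the identification $T_xM\cong T_{(x,z(x))}G$. The lift of $v=\nabla z/w$ to $G$ is $\tilde v=v+\mathrm dz(v)\partial_t$, and $|\tilde v|_{\bar g}=|\nabla z|_g\to\infty$ along the end. Hence
\[
 h(v,v)=A_G(\tilde v,\tilde v)=|\nabla z|_g^2\,A_G(e,e),
 \qquad e=\tilde v/|\tilde v|_{\bar g}\to\partial_t .
\]
Smooth convergence to $\Gamma\times\R$ only gives $A_G(e,e)\to0$, which does not imply $h(v,v)\to0$. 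In fact, if $h=g$ then $A_G(e,e)=g(v,v)/|\nabla z|^2=w^{-2}\to0$, which is perfectly consistent with cylindrical asymptotics.

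Your alternative fails the same way. Since $K=g$ is extended with vanishing $\partial_t$-components, $K(e,e)=w^{-2}\to0$. So $|h-g|^2_{\bar g}\to|A_\Gamma-g_\Gamma|^2$, not $|A_\Gamma-g_\Gamma|^2+1$. This limit vanishes whenever $A_\Gamma=g_\Gamma$, so no contradiction follows. The local asymptotic geometry of a single end cannot by itself rule out $h=g$.

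\textbf{The missing idea is to integrate $h=g$ on the base $M$, as the paper does.} From $\Hess_gz=wg$ and $w^2=1+|\nabla z|_g^2$,
\[
 w\,\mathrm dw=\Hess_gz(\nabla z,\cdot)=w\,\mathrm dz ,
\]
so $w=z+C$ on $U$.
\begin{itemize}
\item Since $w\geq1$, $z$ is bounded below, which excludes ends with $z\to-\infty$.
\item On an end with $z\to+\infty$, set $r=\operatorname{arcosh}w$. Then $|\nabla r|_g=|\nabla z|_g/\sqrt{w^2-1}=1$. So $r$ changes by at most $\varepsilon$ along a unit-speed $g$-geodesic of length $\varepsilon$ running into $\Gamma$, contradicting $z\to+\infty$.
\end{itemize}
In words, $h=g$ forces $z$ to grow at most like $\cosh$ of the $g$-distance. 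That is incompatible with blow-up along a hypersurface $\Gamma$ at finite $g$-distance. This global ODE constraint, not the cylinder limit, is what produces the contradiction.
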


\begin{proof}
Suppose that the pair is critical and let $\xi_k$ be a null sequence.
By \eqref{eq:q-exact},
\[
 \|\bar\nabla\xi_k\|_{L^2(G)}\longrightarrow0,
 \qquad
 \|\bar\nabla\xi_k+\xi_kX\|_{L^2(G)}\longrightarrow0,
 \qquad
 \int_G(2\mu+|h-g|^2)\xi_k^2\longrightarrow0.
\]
Choose a connected compact exhaustion
$K_1\Subset K_2\Subset\cdots$ of $G$ with $\overline O\subset\mathring K_1$.
Poincar\'e's inequality on each $K_j$, followed by a diagonal argument, gives
a subsequence converging in $H^1_{\mathrm{loc}}(G)$ to a constant.  The
normalization in \eqref{eq:null-sequence} shows that this constant is positive;
write it as $c$.  Consequently
$X=0$ and $2\mu+|h-g|^2=0$ on $G$.  Thus $\mu=0$ and $h=g$.

Since $h=w^{-1}\Hess_gz$, one obtains
\begin{equation}\label{eq:hess-z}
 \Hess_gz=wg.
\end{equation}
Differentiating $w$ gives $\mathrm dw=\mathrm dz$, hence $w=z+C>0$; thus no
end can satisfy $z\to-\infty$.  On an end where $z\to+\infty$, set
$r=\operatorname{arcosh}w$.  Then
\[
 |\nabla r|_g
 =\frac{|\nabla w|_g}{\sqrt{w^2-1}}=1.
\]
Let $\Gamma$ be the compact hypersurface to which this end is asymptotic.  In a
$g$-normal collar of $\Gamma$, choose a unit-speed normal geodesic
$\gamma:(0,\varepsilon]\to U$ such that $\gamma(t)\to\Gamma$ as
$t\downarrow0$.  Since $z\to+\infty$ on this end,
\[
 z(\gamma(t))\longrightarrow\infty,
 \qquad
 r(\gamma(t))\longrightarrow\infty.
\]
For sufficiently small $t$, the function $r$ is smooth along $\gamma$.  Since
$|\nabla r|_g=1$,
\[
 \left|r(\gamma(t))-r(\gamma(\varepsilon))\right|
 \leq \int_t^\varepsilon |\nabla r|_g|\dot\gamma|_g\,\md s
 =\varepsilon-t\leq\varepsilon,
\]
which contradicts $r(\gamma(t))\to\infty$.
\end{proof}

\begin{defi}\label{def:uniformly-cylindrical}
Let $E$ be an end of $(M,g)$ and let $\mathcal L$ be a second-order elliptic
operator on $E$.  We say that $E$ is \emph{uniformly cylindrical for
$\mathcal L$} if a closed manifold $\Gamma$, a Riemannian metric
$g_\Gamma$ on $\Gamma$, and a diffeomorphism
\[
 \Phi:[0,\infty)\times\Gamma\longrightarrow E,
\]
exist such that the following estimates hold with a constant independent of
$s\geq0$.  Write $g_0=\md t^2+g_\Gamma$ and
$S_s=[s,s+1]\times\Gamma$.
\begin{enumerate}[label=\textnormal{(\roman*)},leftmargin=2.5em]
 \item $C^{-1}g_0\leq\Phi^*g\leq Cg_0$ on $[0,\infty)\times\Gamma$;
 \item the coefficients of $\Phi^*g$ and $(\Phi^*g)^{-1}$ have
 uniform $C^{2,\alpha}$ bounds on $S_s$;
 \item the coefficients of $\Phi^*\mathcal L$ have uniform
 $C^{0,\alpha}$ bounds on $S_s$.
\end{enumerate}
All coefficient bounds are taken in the product coordinates determined by
$\Phi$ and a fixed finite atlas on $\Gamma$.
\end{defi}

\begin{lemm}\label{lem:all-end}
Let $(M,g)$ be connected and complete with compact boundary.  Suppose that
a compact subset $M_0\subset M$ and finitely many ends
$E_1,\ldots,E_N$, with $N\geq1$, satisfy
\[
 M\setminus M_0=\bigsqcup_{j=1}^NE_j,
\]
and every $E_j$ is uniformly cylindrical for $\mathcal L$ in the sense of
\cref{def:uniformly-cylindrical}.  Let
\[
 \mathcal L=-c_n\Delta_g+V,
 \qquad
 \mathcal B=\frac{c_n}{2}\partial_\nu+\beta,
\]
where $V\in C^{0,\alpha}_{\mathrm{loc}}(M)$ and
$\beta\in C^{1,\alpha}(\partial M)$, and let the associated symmetric
bilinear form be
\[
  Q(\varphi,\psi)
  =\int_M(c_n\langle\nabla\varphi,\nabla\psi\rangle+V\varphi\psi)
  +2\int_{\partial M}\beta\varphi\psi.
\]
If $Q(\varphi,\varphi)\geq0$ for every
$\varphi\in C_c^\infty(M)$ and $(\mathcal L,\mathcal B)$ is subcritical, then
there is a positive $v\in C^{2,\alpha}_{\mathrm{loc}}(M)$ such that
\begin{equation}\label{eq:v}
 \mathcal Lv=0,
 \qquad \mathcal Bv=0,
\end{equation}
and $v$ has a positive lower bound on every end.
\end{lemm}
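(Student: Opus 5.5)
We construct $v$ as a limit of solutions on an exhaustion and use subcriticality to obtain a Hardy-type inequality, which together with the cylindrical geometry yields the lower bound on each end. Throughout, $Q\geq0$ and subcriticality of $(\mathcal L,\mathcal B)$ are the only analytic inputs.

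\textbf{Step 1: Hardy inequality from subcriticality.} First we derive a positive weight. By subcriticality (the theory of \cite[Theorem~6.4]{PinchoverVersano}, adapted to the Robin setting), there is a positive continuous $W$ on $M$ with
\[
 Q(\varphi,\varphi)\geq\int_M W\varphi^2\qquad\text{for all }\varphi\in C_c^\infty(M).
\]
Fix $O\Subset\mathring M$ and take $W$ supported near $\overline O$. This follows by contradiction: if no such weight existed for $O$, normalizing would produce a null sequence as in \cref{def:criticality}. Ground-state transform arguments then make $W$ positive everywhere.

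\textbf{Step 2: Existence of $v$.} Let $M_R=M_0\cup\bigcup_j\Phi_j([0,R]\times\Gamma_j)$. On $M_R$, solve
\[
 \mathcal L v_R=\chi,\qquad \mathcal B v_R=0\ \text{on }\partial M,\qquad v_R=0\ \text{on the slices }\{R\}\times\Gamma_j,
\]
where $\chi\geq0$ is a fixed bump supported in $O$. The Hardy inequality makes the quadratic form coercive, so Lax--Milgram gives a solution. A weak maximum principle, obtained by testing with $v_R^-$ and using coercivity, gives $v_R>0$. Testing with $v_R$ gives
\[
 Q(v_R,v_R)=\int\chi v_R\leq C\|v_R\|_{L^2(W)},
\]
a uniform bound in the energy space. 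Since $v_R$ is also monotone in $R$, it converges to a solution of $\mathcal L v=\chi$, $\mathcal Bv=0$. Boundary Schauder estimates give $C^{2,\alpha}_{\mathrm{loc}}$ convergence, and the strong maximum principle and Hopf lemma give $v>0$.

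\textbf{Step 3: Removing the source.} To obtain $\mathcal Lv=0$ exactly, take bumps $\chi_k$ supported in $O_k$ escaping to infinity in one end $E_1$. Normalize $v^{(k)}(x_0)=1$. The Harnack inequality, which holds uniformly thanks to the uniform cylindrical bounds (i)--(iii) in \cref{def:uniformly-cylindrical} and boundary Harnack for the Robin condition, gives local bounds, so a subsequence converges to a positive solution of \eqref{eq:v}. Alternatively, one can keep $\chi$ and pass to the Agmon ground state at infinity.

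\textbf{Step 4: Positive lower bound on every end (main obstacle).} The difficulty is showing that $v$ does not decay along any end $E_j$. On $S_s$ the uniform Harnack constant gives
\[
 \sup_{S_s}v\leq C\inf_{S_s}v,
\]
so it suffices to rule out $\inf_{S_s}v\to0$ as $s\to\infty$. The plan is to argue by contradiction using the minimality of the construction and a capacity argument.

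1. Along a sequence $s\to\infty$, the translates $v(\cdot+s)/v(s)$ converge to a positive solution on the limiting cylinder $\mathbb R\times\Gamma$.
2. On the cylinder, positive solutions of the translation-invariant limit are combinations of $e^{\pm\lambda t}\phi_0$, where $\lambda^2c_n$ equals the bottom eigenvalue of the cross-section operator.
3. Nonnegativity of $Q$ together with subcriticality should force $\lambda=0$: the cylindrical ends have the product structure coming from the Jang asymptotics, so the cross-sectional limiting potential has bottom eigenvalue $0$.
4. Then $\lambda=0$ makes $v$ comparable to a constant at infinity.

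The alternative route, which I would try first, is to avoid the spectral analysis and instead use cutoff functions $\eta_s$ equal to $1$ on $M_s$ and linear on $S_s$ as test functions in the Hardy inequality. Their energy is uniformly bounded, so their weighted capacity is bounded. Comparing this with $\int W v^2$ through the Green-function construction of Step 2 gives $\inf_{S_s}v\geq c>0$.

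Step 4 is where I expect the real work: the uniform-in-$s$ control of $v$ along ends requires combining the Harnack chain with a weighted capacity lower bound.
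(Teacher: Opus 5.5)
Steps 1--3 are reasonable in outline, but Step 4 has a genuine gap, and it is structural rather than technical. The functions you construct generally \emph{do} decay on some end, so no Harnack, spectral or capacity argument can give $\inf_{E_j}v>0$ for them.

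The Green potential of Step 2, and likewise the ``Agmon ground state'' alternative, is a positive solution of minimal growth at infinity and tends to $0$ along every end. The limit in Step 3, built from sources escaping along $E_1$, has minimal growth on all the other ends. Take $\R\times\Gamma$ with the product metric and $V\equiv c_n\lambda^2$, $\lambda>0$. Then $Q(\varphi,\varphi)\geq\int V\varphi^2$, so the pair is subcritical. Yet Step 3 produces $e^{\mp\lambda t}$, which decays on one end, and the Green potential decays on both. The solution the lemma asks for is $\cosh(\lambda t)$, a \emph{sum} of the two extremal positive solutions.

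The repairs you propose in Step 4 do not address this:
\begin{enumerate}
\item Uniform cylindricity only gives subsequential limits of translates, and these need not be translation invariant, so the separation of variables in items 1--2 is unavailable.
\item The claim $\lambda=0$ in item 3 imports Jang asymptotics that are not hypotheses of the lemma, and the example above shows it is false in general.
\item In the alternative route, $Q(\eta_s,\eta_s)$ contains $\int V\eta_s^2$ over the growing region $M_s$, so it is not uniformly bounded. In any case, a lower bound on $Q(\eta_s,\eta_s)$ says nothing pointwise about $v$.
\end{enumerate}

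The missing idea is a per-end dichotomy followed by summing positive solutions, and this is where subcriticality actually enters. The paper's proof runs as follows.
\begin{enumerate}
\item It takes $\psi$ to be the normalized limit of solutions on $M_R$ with the Robin condition on $\partial M$ and boundary value $1$ on all of $Z_R$.
\item It uses the ground-state transform $Q(\psi\eta,\psi\eta)=c_n\int_M\psi^2|\nabla\eta|^2$. On each end it compares $\psi^2$-weighted capacity with the resistance $\mathcal R_j(0,\infty)=\int_0^\infty\mathrm dt/A_j(t)$, where Harnack gives $A_j(t)\asymp(\min_{\Gamma_{j,t}}\psi)^2$.
\item If $\psi$ decays on $E_j$, then $\mathcal R_j(0,\infty)=\infty$. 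If this happened on every end, $\psi\eta_k$ with resistance-profile cutoffs $\eta_k$ would be a null sequence. So subcriticality yields an end $E_0$ of finite resistance, on which $\psi$ is bounded below.
\item For each end $E_j$ on which $\psi$ decays, a Li--Tam construction gives $w_j>0$ with $\operatorname{div}(\psi^2\nabla w_j)=0$ and $\partial_\nu w_j=0$ on $\partial M$. It has unit $\psi^2$-flux through the slices of $E_j$, with the flux exiting through $E_0$.
\item The flux identity and Harnack give $w_j\gtrsim A_j^{-1}$ on $E_j$. Hence $u_j=\psi w_j$ solves $\mathcal Lu_j=0$, $\mathcal Bu_j=0$ and is bounded below on $E_j$.
\item Finally $v=\psi+\sum_ju_j$.
\end{enumerate}
Your Hardy inequality cannot replace this step. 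It only controls compactly supported test functions and carries no information about how a given positive solution behaves at infinity.
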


\begin{proof}
The proof follows Pinchover--Tintarev
\cite[Lemma~2.4]{PinchoverTintarev}, with the boundary terms treated using the
Robin condition.
By subcriticality, $Q$ admits no null sequence in the sense of
Definition~\ref{def:criticality}.  Fix the compact subset $M_0$ in the
statement, so that
\[
 M\setminus M_0=\bigsqcup_{j=1}^NE_j,
 \qquad E_j\simeq[0,\infty)\times\Gamma_j,
\]
with uniformly controlled product coordinates.  For $R>0$, let
\[
 M_R=M_0\cup\bigcup_{j=1}^N\bigl([0,R]\times\Gamma_j\bigr),
 \qquad
 Z_R=\bigsqcup_{j=1}^N\bigl(\{R\}\times\Gamma_j\bigr).
\]
Thus $M_R$ is compact and $\partial M_R=\partial M\sqcup Z_R$.

Consider the mixed eigenvalue problem
\[
 \begin{cases}
  \mathcal L\phi=\lambda\phi&\text{in }M_R,\\
  \mathcal B\phi=0&\text{on }\partial M,\\
  \phi=0&\text{on }Z_R.
 \end{cases}
\]
Its first eigenvalue is characterized by
\[
 \lambda_1(M_R)
 =\inf_{\substack{\phi\in H^1(M_R)\setminus\{0\}\\
                   \phi|_{Z_R}=0}}
   \frac{Q(\phi,\phi)}{\displaystyle\int_{M_R}\phi^2}.
\]
Since $Q(\phi,\phi)\geq0$, we have $\lambda_1(M_R)\geq0$.

We first show that $\lambda_1(M_R)>0$.  Suppose instead that
$\lambda_1(M_R)=0$, and let $\phi_R>0$ be a corresponding first
eigenfunction.  Choose a nonempty open set $O\Subset\mathring M_0$ and
normalize $\phi_R$ so that
\[
 \int_O\phi_R^2=1.
\]
Because $\phi_R=0$ on $Z_R$, its extension by zero from $M_R$ to $M$,
denoted by $\widetilde\phi_R$, belongs to $H^1(M)$ and has compact support.
The eigenvalue equation and the boundary conditions give
\[
 Q(\widetilde\phi_R,\widetilde\phi_R)=0.
\]
Choose nonnegative functions $\eta_k\in C_c^\infty(M)$ converging to
$\widetilde\phi_R$ in $H^1(M)$.  The local boundedness of the coefficients
and the continuity of the restriction map $H^1(M)\to L^2(\partial M)$ imply
\[
 Q(\eta_k,\eta_k)\longrightarrow0,
 \qquad
 \int_O\eta_k^2\longrightarrow1.
\]
Replacing $\eta_k$ by
\[
 \xi_k=
 \frac{\eta_k}{\left(\displaystyle\int_O\eta_k^2\right)^{1/2}},
\]
we obtain
\[
 \xi_k\geq0,
 \qquad
 \int_O\xi_k^2=1,
 \qquad
 Q(\xi_k,\xi_k)\longrightarrow0.
\]
Hence $\{\xi_k\}$ is a null sequence, contradicting subcriticality.
Therefore $\lambda_1(M_R)>0$.

Thus the mixed problem
\[
 \mathcal L\psi_R=0\quad\text{in }M_R,
 \qquad \mathcal B\psi_R=0\quad\text{on }\partial M,
 \qquad \psi_R=1\quad\text{on }Z_R
\]
has a positive solution.  Fix $o\in\mathring M_0$ and replace $\psi_R$ by
$\psi_R/\psi_R(o)$.  Harnack and Schauder estimates give a subsequence
converging to a positive function $\psi$ satisfying
\[
 \mathcal L\psi=0\quad\text{on }M,
 \qquad \mathcal B\psi=0\quad\text{on }\partial M,
 \qquad \psi(o)=1.
\]
For every compactly supported smooth function $\eta$, integration by parts
and the equations for $\psi$ give
\begin{equation}\label{eq:ground-transform}
 Q(\psi\eta,\psi\eta)=c_n\int_M\psi^2|\nabla\eta|^2.
\end{equation}
On each set $[t-1,t+1]\times\Gamma_j$, the Harnack and Schauder estimates
give
\begin{equation}\label{eq:psi-slab}
 \sup_{[t-1,t+1]\times\Gamma_j}
 (\psi+|\nabla\psi|+|\nabla^2\psi|)
 \leq C\inf_{[t-1,t+1]\times\Gamma_j}\psi.
\end{equation}
We now adapt the end construction of Li--Tam
\cite[Section~1 and the proof of Theorem~2.1]{LiTam} to the weighted operator
$\operatorname{div}(\psi^2\nabla\cdot)$; see also Grigor'yan
\cite[Sections~4.3 and~5]{Grigoryan}.
For $0\leq r<s$, define
\begin{equation*}
 A_j(t)=\int_{\Gamma_{j,t}}\psi^2|\nabla t|\,\mathrm dA,
 \qquad
 \mathcal R_j(r,s)=\int_r^s\frac{\mathrm dt}{A_j(t)},
\end{equation*}
and let
\[
 \operatorname{Cap}_{\psi,j}(r,s)
 :=\inf\left\{\int_{[r,s]\times\Gamma_j}\psi^2|\nabla\eta|^2:
 \eta|_{\Gamma_{j,r}}=1,\ \eta|_{\Gamma_{j,s}}=0\right\}.
\]
There is a constant $C\geq1$, independent of $j$, $r$, and $s$, such that
\begin{equation}\label{eq:weighted-capacity-comparison}
 C^{-1}\mathcal R_j(r,s)^{-1}
 \leq \operatorname{Cap}_{\psi,j}(r,s)
 \leq C\mathcal R_j(r,s)^{-1}.
\end{equation}
To prove the upper bound, take
\[
 \eta_0(t,\theta)
 =\frac{1}{\mathcal R_j(r,s)}\int_t^s\frac{\mathrm d\tau}{A_j(\tau)}.
\]
Then $\eta_0=1$ on $\Gamma_{j,r}$ and $\eta_0=0$ on
$\Gamma_{j,s}$.  The coarea formula gives
\[
 \int_{[r,s]\times\Gamma_j}\psi^2|\nabla\eta_0|^2
 =\int_r^s A_j(t)|\partial_t\eta_0|^2\,\mathrm dt
 =\mathcal R_j(r,s)^{-1},
\]
which proves the upper bound in \eqref{eq:weighted-capacity-comparison}.

For the lower bound, write
$\mathrm dV=J(t,\theta)\,\mathrm dt\,\mathrm d\theta$ in the product
coordinates.  The Harnack inequality on unit slabs and the metric bounds
imply
\[
 C^{-1}A_j(t)\leq\psi(t,\theta)^2J(t,\theta)\leq CA_j(t),
\]
with $C$ independent of $j$ and $t$.  If $\eta$ is admissible in the
definition of $\operatorname{Cap}_{\psi,j}(r,s)$, then Cauchy--Schwarz along
each curve $t\mapsto(t,\theta)$ gives
\[
 1=\left|\int_r^s\partial_t\eta\,\mathrm dt\right|
 \leq C\left(\int_r^s\psi^2|\nabla\eta|^2J\,\mathrm dt\right)^{1/2}
       \left(\int_r^s\frac{\mathrm dt}{A_j(t)}\right)^{1/2}.
\]
Squaring and integrating in $\theta$, we obtain
\[
 \int_{[r,s]\times\Gamma_j}\psi^2|\nabla\eta|^2
 \geq C^{-1}\mathcal R_j(r,s)^{-1}.
\]
Taking the infimum over $\eta$ proves the lower bound and hence
\eqref{eq:weighted-capacity-comparison}.

Let $m_j(t)=\min_{\Gamma_{j,t}}\psi$.  The Harnack inequality on unit slabs
gives
\begin{equation}\label{eq:A-m}
 C^{-1}m_j(t)^2\leq A_j(t)\leq Cm_j(t)^2.
\end{equation}
If $\inf_{E_j}\psi=0$, choose $t_k\to\infty$ with
$m_j(t_k)\to0$.  Passing to a subsequence, we may assume
$t_{k+1}-t_k\geq2$.  Then
$A_j(t)\leq Cm_j(t_k)^2$ for $|t-t_k|\leq1/2$.  Thus
\begin{equation}\label{eq:vanishing-implies-divergence}
 \inf_{E_j}\psi=0
 \quad\Longrightarrow\quad
 \mathcal R_j(0,\infty)=\infty.
\end{equation}
Suppose that $\mathcal R_j(0,\infty)=\infty$ for every end.  Since there are
only finitely many ends, we may choose $R_k\to\infty$ such that
$\mathcal R_j(0,R_k)\to\infty$ for every $j$.  Set $\eta_k=1$ on $M_0$ and,
on $E_j$, set
\[
 \eta_k(t,\theta)=
 \begin{cases}
  \displaystyle
  \frac{1}{\mathcal R_j(0,R_k)}
  \int_t^{R_k}\frac{\mathrm d\tau}{A_j(\tau)},&0\leq t\leq R_k,\\[6pt]
  0,&t\geq R_k.
 \end{cases}
\]
Each $\eta_k$ has compact support and belongs to $H^1(M)$.  Smooth compactly
supported functions approximate $\eta_k$ in $H^1(M)$ within a fixed compact
neighborhood of $\operatorname{supp}\eta_k$.  Since the coefficients of $Q$
are bounded there, \eqref{eq:ground-transform} remains valid for $\eta_k$ by
approximation.  The coarea formula gives
\[
 \int_M\psi^2|\nabla\eta_k|^2
 =\sum_j\mathcal R_j(0,R_k)^{-1}\longrightarrow0.
\]
Since $\eta_k=1$ on $M_0$, normalize $\psi\eta_k$ on a fixed relatively
compact open subset of $\mathring M_0$.  Equation
\eqref{eq:ground-transform} then gives a null sequence, contradicting
subcriticality.  Hence there is an end
$E_0$ such that $\mathcal R_0(0,\infty)<\infty$.  By
\eqref{eq:vanishing-implies-divergence},
\begin{equation}\label{eq:reference-end-lower}
 \inf_{E_0}\psi>0.
\end{equation}
By \eqref{eq:weighted-capacity-comparison},
$\operatorname{Cap}_{\psi,0}(0,R)$ is bounded below by a positive constant
independent of $R$.

Fix an end $E_j$ satisfying $\inf_{E_j}\psi=0$.  On $M_R$, minimize
\[
 \int_{M_R}\psi^2|\nabla h|^2
\]
over $h\in H^1(M_R)$ satisfying $h=1$ on $\Gamma_{j,R}$ and $h=0$ on
$\Gamma_{0,R}$ in the Sobolev sense.  Let $h_R$ be the minimizer.  Its
Euler--Lagrange equation is $\operatorname{div}(\psi^2\nabla h_R)=0$;
moreover,
$\partial_\nu h_R=0$ on $\partial M$ and on $\Gamma_{\ell,R}$ for
$\ell\notin\{0,j\}$.  Denote
\[
 \kappa_R=\int_{\Gamma_{j,R}}
 \psi^2\partial_\nu h_R\,\mathrm dA.
\]
The maximum principle and the Hopf lemma give $0<h_R<1$ and $\kappa_R>0$.
The divergence theorem and integration by parts give
\begin{equation}\label{eq:weighted-boundary-energy}
 \int_{\Gamma_{0,R}}\psi^2\partial_\nu h_R\,\mathrm dA=-\kappa_R,
 \qquad
 \int_{M_R}\psi^2|\nabla h_R|^2=\kappa_R.
\end{equation}
A competitor supported on $E_j$, together with
\eqref{eq:weighted-capacity-comparison}, yields
\[
 0<\kappa_R\leq C\mathcal R_j(0,R)^{-1}\longrightarrow0.
\]
Set $w_R=\kappa_R^{-1}h_R$.

Let $p_R$ minimize
\[
 \int_{[0,R]\times\Gamma_0}\psi^2|\nabla p|^2
\]
among functions equal to $1$ on $\Gamma_{0,0}$ and to $0$ on
$\Gamma_{0,R}$.  If $\nu$ is the outward unit normal to
$[0,R]\times\Gamma_0$ along $\Gamma_{0,R}$, then
\[
 \operatorname{Cap}_{\psi,0}(0,R)
 =-\int_{\Gamma_{0,R}}\psi^2\partial_\nu p_R\,\mathrm dA
 \geq\frac{c}{\mathcal R_0(0,\infty)}=:c_0>0.
\]
If $a_R=\min_{\Gamma_{0,0}}w_R$, the maximum principle gives
$w_R\geq a_Rp_R$ on $[0,R]\times\Gamma_0$.  Both sides vanish on
$\Gamma_{0,R}$, so the Hopf lemma and
\eqref{eq:weighted-boundary-energy} imply
\[
 1=-\int_{\Gamma_{0,R}}\psi^2\partial_\nu w_R\,\mathrm dA
 \geq
 -a_R\int_{\Gamma_{0,R}}\psi^2\partial_\nu p_R\,\mathrm dA
 =a_R\operatorname{Cap}_{\psi,0}(0,R).
\]
Thus $a_R\leq c_0^{-1}$.  A finite Harnack chain through $M_0$, followed by
the local Harnack inequality in the product coordinates on each end, gives
uniform bounds for $w_R$ on every compact subset of $M$.
Passing to a subsequence, $w_R\to w_j$ in $C^2_{\mathrm{loc}}$, where
\begin{equation}\label{eq:weighted-harmonic}
 \operatorname{div}(\psi^2\nabla w_j)=0,
 \qquad
 \partial_\nu w_j=0\quad\text{on }\partial M.
\end{equation}

On $E_j$, let $\nu_t$ point toward increasing $t$.  Applying the divergence
theorem to the region between $\Gamma_{j,t}$ and $\Gamma_{j,R}$, and then
letting $R\to\infty$, gives
\begin{equation}\label{eq:unit-normal-integral}
 \int_{\Gamma_{j,t}}\psi^2
 \partial_{\nu_t}w_j\,\mathrm dA=1
 \qquad (t>0).
\end{equation}
In particular, $w_j\not\equiv0$, and hence $w_j>0$.

The equation \eqref{eq:weighted-harmonic}, estimate
\eqref{eq:psi-slab}, and Harnack's inequality give
\[
 \sup_{\Gamma_{j,t}}|\nabla w_j|
 \leq C\inf_{[t-1,t+1]\times\Gamma_j}w_j.
\]
Consequently, \eqref{eq:unit-normal-integral} and \eqref{eq:A-m} imply
\[
 \inf_{[t-1,t+1]\times\Gamma_j}w_j
 \geq\frac{c}{A_j(t)}
 \geq\frac{c}{m_j(t)^2}.
\]
If $m_j(t)\leq1$, then
\[
 \inf_{\Gamma_{j,t}}\psi w_j
 \geq\frac{c}{m_j(t)}\geq c.
\]
Let $I=(a,b)$ be a component of $\{t:m_j(t)>1\}$ not containing $0$.
Since $\inf_{E_j}\psi=0$, the interval $I$ is bounded and
$m_j(a)=m_j(b)=1$.  At $t=a,b$, the Harnack inequality on unit slabs gives
$\sup_{\Gamma_{j,t}}\psi\leq C$, while
$\inf_{\Gamma_{j,t}}\psi w_j\geq c$.  Hence $w_j\geq c/C$ on
$\Gamma_{j,a}\cup\Gamma_{j,b}$.
The minimum principle for \eqref{eq:weighted-harmonic} gives the same bound
on $I\times\Gamma_j$.  Since $\psi>1$ there, $\psi w_j$ has a positive lower
bound on $I\times\Gamma_j$.  If a component of $\{t:m_j(t)>1\}$ has closure
meeting $0$, then its closure is compact because $\inf_{E_j}\psi=0$; the
continuous positive function $\psi w_j$ therefore has a positive minimum on
that closure.  Consequently
\begin{equation}\label{eq:corrected-end-lower}
 \inf_{E_j}\psi w_j>0.
\end{equation}
The identity
\[
 \mathcal L(\psi w_j)
 =-c_n\psi^{-1}\operatorname{div}(\psi^2\nabla w_j),
\]
and the condition $\partial_\nu w_j=0$ on $\partial M$ show that
$u_j:=\psi w_j$ satisfies $\mathcal Lu_j=0$ and $\mathcal Bu_j=0$.

Summing over the ends on which $\inf_{E_j}\psi=0$, set
\[
 v=\psi+\sum_{\{j:\,\inf_{E_j}\psi=0\}}u_j.
\]
It satisfies \eqref{eq:v}.  On every end with $\inf_{E_j}\psi>0$, its lower
bound follows from $v\geq\psi$; on every end with $\inf_{E_j}\psi=0$, it
follows from \eqref{eq:corrected-end-lower}.
\end{proof}

Applying \cref{lem:subcritical,lem:all-end} to the Jang graph, we obtain a
positive solution of $Lv=0$, $Bv=0$ satisfying $\inf_{E_j}v>0$ on every
end $E_j$.
In the present application, the metric and the coefficients of $L$ and $B$
are smooth.  Elliptic regularity therefore implies that $v$ is smooth.
\Cref{prop:jang} provides the required uniform estimates in the cylindrical
coordinates.

Assume that the Jang graph has ends.  For each end $E_j$, write
$\Phi_j:[0,\infty)\times\Gamma_j\to E_j$ for its product coordinates.  For
every sufficiently large integer $R$, set
\[
 G_R=G\setminus\bigcup_{j=1}^N
 \Phi_j((R,\infty)\times\Gamma_j),
 \qquad
 Z_R=\bigsqcup_{j=1}^N\Phi_j(\{R\}\times\Gamma_j).
\]
Then $G_R$ is compact and
\[
 \partial G_R=\Sigma\sqcup Z_R.
\]
Since $\bar g$ is uniformly equivalent to the product metric on every end,
there are constants $C,C_0>0$, independent of $R$, such that
\begin{equation}\label{eq:distance-to-ZR}
 \dist_{\bar g}(\Sigma,Z_R)
 \geq C^{-1/2}R-C_0\longrightarrow\infty
 \qquad\text{as }R\longrightarrow\infty.
\end{equation}
Let $\nu$ be the outward $\bar g$-unit normal to $Z_R$ in $G_R$, and set
\begin{equation*}
 \delta_R=\frac{c_n}{2}\frac{\partial_\nu v}{v}+X(\nu)
 \quad\text{on }Z_R.
\end{equation*}

\begin{lemm}
\label{lem:cylinderization}
For $\eps>0$, let
\begin{equation*}
\begin{split}
 Q_{R,\eps}(\phi,\psi)={}&
 \int_{G_R}\bigl(c_n\langle\bar\nabla\phi,\bar\nabla\psi\rangle
                  +R_{\bar g}\phi\psi\bigr)
 +2\int_\Sigma X(\bar\nu)\phi\psi\\
 &+2\int_{Z_R}(X(\nu)-\delta_R+\eps)\phi\psi.
\end{split}
\end{equation*}
Then
\begin{equation}\label{eq:picone}
 Q_{R,\eps}(\phi,\phi)
 =c_n\int_{G_R}v^2
  \left|\bar\nabla\left(\frac\phi v\right)\right|^2
 +2\eps\int_{Z_R}\phi^2.
\end{equation}
The principal eigenvalue $\lambda_{R,\eps}$ is positive.  Its positive
eigenfunction can be normalized so that, for fixed $R$,
$u_{R,\eps}/v\to1$ smoothly on $G_R$ as $\eps\downarrow0$.
\end{lemm}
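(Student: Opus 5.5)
The plan is to prove \eqref{eq:picone} by a Picone (ground-state) substitution $\phi=v\eta$, using the equations $Lv=0$ in $G$ and $Bv=0$ on $\Sigma$ from \cref{lem:all-end}. We then read off positivity of $\lambda_{R,\eps}$ and the limit $u_{R,\eps}/v\to1$ from the identity.

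\textbf{The identity.} Write $\phi=v\eta$ with $\eta=\phi/v$, which is smooth on $G_R$ since $v>0$ is smooth there. Expand
\[
 c_n|\bar\nabla(v\eta)|^2
 =c_nv^2|\bar\nabla\eta|^2+c_n\eta^2|\bar\nabla v|^2
 +\tfrac{c_n}{2}\langle\bar\nabla v^2,\bar\nabla\eta^2\rangle .
\]
Integrate the last term by parts on $G_R$. The interior term $-\tfrac{c_n}{2}\eta^2\bar\Delta v^2$ combines with $c_n\eta^2|\bar\nabla v|^2$ to give $-c_nv\eta^2\bar\Delta v$. The boundary term is $c_n\eta^2 v\partial_\nu v$ on $\partial G_R=\Sigma\sqcup Z_R$. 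Using $-c_n\bar\Delta v+R_{\bar g}v=0$, the interior terms reduce to $c_n\int v^2|\bar\nabla\eta|^2$.

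For the boundary terms:
\begin{itemize}
\item On $\Sigma$, the condition $Bv=0$ gives $c_nv\partial_{\bar\nu}v=-2X(\bar\nu)v^2$. This cancels $2\int_\Sigma X(\bar\nu)\phi^2$.
\item On $Z_R$, the definition of $\delta_R$ gives $c_nv\partial_\nu v=2(\delta_R-X(\nu))v^2$. This cancels the $X(\nu)-\delta_R$ part of the $Z_R$ integrand and leaves $2\eps\int_{Z_R}\phi^2$.
\end{itemize}
This yields \eqref{eq:picone} for all $\phi\in H^1(G_R)$, by density.

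\textbf{Positivity of $\lambda_{R,\eps}$.} The principal eigenvalue of the Robin problem on the compact manifold $G_R$ is the infimum of $Q_{R,\eps}(\phi,\phi)/\int\phi^2$. It is attained by standard compactness ($H^1\hookrightarrow L^2$ and traces are compact), and the minimizer may be taken positive. By \eqref{eq:picone}, $\lambda_{R,\eps}\ge0$. If $\lambda_{R,\eps}=0$, the minimizer $\phi$ would satisfy $\bar\nabla(\phi/v)=0$, so $\phi=cv$ with $c\ne0$ since $G_R$ is connected. Then $2\eps\int_{Z_R}\phi^2=2\eps c^2\int_{Z_R}v^2>0$, a contradiction. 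Hence $\lambda_{R,\eps}>0$.

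\textbf{Convergence as $\eps\downarrow0$.} At $\eps=0$ the form $Q_{R,0}$ has principal eigenvalue $0$ with simple eigenfunction $v$; simplicity follows from \eqref{eq:picone}. The $\eps$-term is a bounded perturbation of the boundary coefficient, so we argue as follows.
\begin{enumerate}
\item Testing with $v$ gives $0<\lambda_{R,\eps}\le 2\eps\int_{Z_R}v^2/\int_{G_R}v^2\to0$.
\item Normalize $\|u_{R,\eps}\|_{L^2}=\|v\|_{L^2}$ with $u_{R,\eps}>0$. Then $Q_{R,0}(u_{R,\eps},u_{R,\eps})\to0$, so $c_n\int v^2|\bar\nabla(u_{R,\eps}/v)|^2\to0$.
\item Poincaré on connected $G_R$ shows that $u_{R,\eps}/v$ tends in $H^1$ to a constant, and the normalization forces this constant to be $1$.
\item The equations $-c_n\bar\Delta u+R_{\bar g}u=\lambda u$, with Robin data smooth and depending smoothly on $\eps$, give uniform $C^{k,\alpha}$ bounds up to the boundary by Schauder estimates. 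Interpolation then upgrades the $H^1$ convergence to smooth convergence $u_{R,\eps}\to v$, hence $u_{R,\eps}/v\to1$ smoothly.
\end{enumerate}

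The main care point is this last regularity step: the Robin coefficient $X(\nu)-\delta_R+\eps$ on $Z_R$ must be smooth. This holds because $v$ is smooth and positive up to $Z_R$, so the boundary Schauder theory applies uniformly in $\eps\in(0,1]$.
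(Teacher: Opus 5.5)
Your proposal is correct and follows essentially the same route as the paper. Both prove the identity by a Picone substitution using $Lv=0$ and $Bv=0$, both get $\lambda_{R,\eps}>0$ from $\phi=cv$ being incompatible with the $2\eps\int_{Z_R}\phi^2$ term, and for $\eps\downarrow0$ you spell out (test with $v$, Poincar\'e, then Schauder) what the paper summarizes as ``standard Robin perturbation theory and elliptic regularity.''
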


\begin{proof}
Picone's identity for $Lv=0$ is
\[
 \int_{G_R}(c_n|\nabla\phi|^2+R_{\bar g}\phi^2)
 =c_n\int_{G_R}v^2|\nabla(\phi/v)|^2
 +c_n\int_{\partial G_R}\frac{\partial_\nu v}{v}\phi^2.
\]
At $\Sigma$, $\frac{c_n}{2}\partial_{\bar\nu}v+X(\bar\nu)v=0$, so the
boundary term cancels $2X(\bar\nu)\phi^2$.  At $Z_R$,
\[
 2(X(\nu)-\delta_R+\eps)
 =-c_n\frac{\partial_\nu v}{v}+2\eps.
\]
This proves \eqref{eq:picone}.  If $Q_{R,\eps}(\phi,\phi)=0$, then
$\phi=cv$ and $\phi|_{Z_R}=0$, hence $\phi=0$.  The first eigenvalue is
positive.  For $\eps=0$, the kernel is $\operatorname{span}\{v\}$.  Standard Robin
perturbation theory and elliptic regularity give smooth convergence.
\end{proof}

Choose $\eps_R$ so small that, with
$u_R=u_{R,\eps_R}$ and $\lambda_R=\lambda_{R,\eps_R}$,
\begin{equation}\label{eq:relative-eigenfunction-approximation}
 \sum_{j=0}^R\sup_{G_R}
 \frac{|\bar\nabla^j(u_R-v)|}{v}\leq R^{-1},
\end{equation}
and set
\begin{equation*}
 \widehat g_R=u_R^{4/(n-2)}\bar g.
\end{equation*}
The conformal formulas and the Robin condition on $\Sigma$ give
\begin{equation}\label{eq:conformal-formulas}
 R_{\widehat g_R}
 =u_R^{-(n+2)/(n-2)}Lu_R
 =\lambda_Ru_R^{-4/(n-2)}>0,
\end{equation}
and
\begin{equation}\label{eq:conformal-H}
 H^{\widehat g_R}_\Sigma
 =u_R^{-2/(n-2)}(\bar H-X(\bar\nu)).
\end{equation}
For \cref{thm:main}, \eqref{eq:outer-fixed} implies
\begin{equation}\label{eq:trace-fixed}
 H^{\widehat g_R}_\Sigma
 \geq\frac{n-1}{\sinh\rho}u_R^{-2/(n-2)}
 \geq\left|\mathrm d\left(\frac{f}{\sinh\rho}\right)
       \right|_{\tr,\widehat g_R}.
\end{equation}
The inequalities $\inf_{E_j}v>0$ for every $j$, together with positivity of
$v$ on $M_0$, give $\inf_Gv>0$.  Hence
\eqref{eq:relative-eigenfunction-approximation} gives
$u_R\geq c>0$, independently of $R$, and therefore
\[
 \dist_{\widehat g_R}(\Sigma,Z_R)\longrightarrow\infty.
\]
Uniform Harnack and Schauder estimates on the cylindrical slabs give
\[
 |\bar\nabla^jv|\leq C_jv\qquad (j\geq1).
\]
Together with \eqref{eq:relative-eigenfunction-approximation}, this gives
\[
 |\bar\nabla^j\log u_R|_{\bar g}\leq C_j
 \qquad (j\geq1),
\]
for each fixed $j$ and all sufficiently large $R$.
For a unit slab $S$, choose $x\in S$ and set
\[
 a_{R,S}:=u_R(x)^{2/(n-2)},
 \qquad
 g_{R,S}:=a_{R,S}^{-2}\widehat g_R
 =\left(\frac{u_R}{u_R(x)}\right)^{4/(n-2)}\bar g.
\]
Writing $g_0=\mathrm dt^2+g_\Gamma$, we have, in the product coordinates,
\[
 \begin{gathered}
 C^{-1}g_0\leq g_{R,S}\leq Cg_0,
 \qquad
 a_{R,S}\geq c^{2/(n-2)}=:a_0>0,\\
 \|g_{R,S}\|_{C^k(S)}
 +\|g_{R,S}^{-1}\|_{C^k(S)}\leq C_k.
 \end{gathered}
\]
Here the constants are independent of $R$ and $S$.
Standard harmonic-coordinate estimates for $g_{R,S}$ give interior
charts and boundary half-charts of a uniform radius $r_0>0$.
Since $\widehat g_R=a_{R,S}^2g_{R,S}$, the corresponding chart radius
for $\widehat g_R$ is at least $a_0r_0$, and
\[
 \begin{aligned}
 |\operatorname{Rm}_{\widehat g_R}|_{\widehat g_R}
 &=a_{R,S}^{-2}
   |\operatorname{Rm}_{g_{R,S}}|_{g_{R,S}}
 \leq Ca_0^{-2},\\
 |A_{Z_R}^{\widehat g_R}|_{\widehat g_R}
 &=a_{R,S}^{-1}
   |A_{Z_R}^{g_{R,S}}|_{g_{R,S}}
 \leq Ca_0^{-1}.
 \end{aligned}
\]
The second estimate applies on $S\cap Z_R$.
On the fixed compact part containing $\Sigma$, the same conclusions
follow from
\[
 \widehat g_R\longrightarrow v^{4/(n-2)}\bar g
 \qquad\text{in }C^\infty.
\]

\begin{prop}\label{prop:no-ends}
Let $3\leq n\leq5$ and assume the hypotheses of \cref{thm:main}.  Apply
\cref{prop:jang} with $K=g$, and denote its Jang graph by $G$.  Then $G$ is
compact.
\end{prop}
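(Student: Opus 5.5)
We argue by contradiction: assume $\Gamma_{\mathrm{int}}\neq\varnothing$, so $G$ has cylindrical ends. By \cref{lem:subcritical} and \cref{lem:all-end} we have the positive solution $v$ with $\inf_Gv>0$. For each large $R$, \cref{lem:cylinderization} gives the compact manifold $(G_R,\widehat g_R)$ with boundary $\Sigma\sqcup Z_R$. Its properties are: $R_{\widehat g_R}>0$ by \eqref{eq:conformal-formulas}; the trace-norm mean curvature inequality \eqref{eq:trace-fixed} on $\Sigma$, relative to the nonzero-degree map $f/\sinh\rho$; $\dist_{\widehat g_R}(\Sigma,Z_R)\to\infty$; and bounded geometry near $Z_R$, uniformly in $R$. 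The goal is a band-width contradiction. A compact manifold with positive scalar curvature, whose "inner" boundary $\Sigma$ satisfies the Wang--Wang--Zhu mean curvature condition relative to a degree-nonzero map, cannot have an arbitrarily distant second boundary component.

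\textbf{Step 1: the $\mu$-bubble.} Let $\varrho$ be a smoothing of $\dist_{\widehat g_R}(\cdot,\Sigma)$ with $|\nabla\varrho|\le 2$. On the region $\{\varrho<D\}$, with $D$ large but below $\dist(\Sigma,Z_R)$ so that $Z_R$ is avoided, choose a weight $h$ that blows up to $-\infty$ as $\varrho\uparrow D$. Take $h$ to be a small multiple of $\tan$-type profile satisfying $h^2-2|\nabla h|\ge -\delta$, where $\delta$ is small relative to $\min R_{\widehat g_R}$ on the relevant region. The lower bound $R_{\widehat g_R}\ge \lambda_R\sup u_R^{-4/(n-2)}$ degenerates as $R\to\infty$, so we work with the form \eqref{eq:picone} directly, using the $\eps_R$-Robin perturbation to keep strict positivity.

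We then minimize a capillary functional $\mathcal A_{h,\beta}$, as in \eqref{eq:capillary-prelim-functional}, among sets $\Omega$ containing a collar of $\Sigma$ and relative to $\Sigma$. The contact angle is chosen as in Wang--Wang--Zhu's proof of \hyperref[thm:wwz-fixed]{Theorem~A}, namely $\beta$ equal to a coordinate function $x_n\circ(f/\sinh\rho)$ after rescaling. This makes $Z=\partial Y\subset\Sigma$ map with nonzero degree to the equatorial $\bS^{n-2}$. The barrier $h\to-\infty$ keeps $Y$ away from $Z_R$. Regularity for $n\le5$ follows from \cite{WangZhang}.

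\textbf{Step 2: stability.} Insert $\phi$ into \eqref{eq:capillary-stability} and use the Schoen--Yau rearrangement with $R_{\widehat g_R}>0$ and $h^2-2|\nabla h|$ controlled. The boundary term uses $H_\Sigma^{\widehat g_R}\ge |\mathrm d(f/\sinh\rho)|_{\tr}$ together with the capillary angle identity. Together these produce a conformal metric on $Y$, or a warped product $Y\times\bS^1$, with nonnegative scalar curvature and boundary mean curvature dominating $|\mathrm d F'|_{\tr}$. Here $F'$ is the induced map $\partial Y\to\bS^{n-2}$, which has nonzero degree, and at least one of the inequalities is strict.

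\textbf{Step 3: the contradiction.} For $n=3$, $Y$ is a surface and Gauss--Bonnet with geodesic curvature above $|\mathrm dF'|$ contradicts $\deg F'\ne0$. For $n=4,5$, \hyperref[thm:wwz-fixed]{Theorem~A} or \hyperref[thm:wwz-listing]{Theorem~B} in dimension $n-1$ forces $Y$ to be a flat ball with equality throughout, which contradicts the strict inequality.

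The main obstacle is Step 1–2. The scalar curvature lower bound $\lambda_R u_R^{-4/(n-2)}$ tends to $0$ as $R\to\infty$, so the standard band-width estimate does not close directly. We must exploit the strict excess instead, either through the $2\eps$ term on $Z_R$ or through strictness of \eqref{eq:trace-fixed}, where $u_R\not\equiv$ const. The degree must also be transferred correctly through the capillary bubble. I would first try to make the $\mu$-bubble weight depend on $\lambda_R$, and use the uniform bounded geometry to show that the bubble stays in a fixed compact part as $R\to\infty$. Passing to the limit metric $v^{4/(n-2)}\bar g$, whose scalar curvature vanishes, then yields the equality case of Theorem A on a manifold with an end, which is impossible.
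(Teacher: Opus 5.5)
There is a genuine gap, and it is the one you flag yourself: Step~1 cannot be carried out with the metrics $\widehat g_R$. A weight $h$ that drops to $-\infty$ at distance $D$ from $\Sigma$ contributes a negative term of order $D^{-2}$ (from $h^2-2|\nabla h|$) to the stability inequality. That term must be absorbed by $R_{\widehat g_R}=\lambda_Ru_R^{-4/(n-2)}$. Since $\eps_R$ is chosen so small that $u_R/v\to1$, nothing gives $\lambda_R\gtrsim\dist_{\widehat g_R}(\Sigma,Z_R)^{-2}$, so a band-width bound does not contradict $\dist_{\widehat g_R}(\Sigma,Z_R)\to\infty$.

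Your proposed repairs do not close this. The $2\eps$ term in \eqref{eq:picone} lives on $Z_R$, which your bubble is designed never to reach, so it gives no scalar-curvature gain where the bubble lives. Passing to $v^{4/(n-2)}\bar g$ and appealing to ``the equality case of Theorem~A on a manifold with an end'' has no theorem behind it, since Theorem~A is a statement about compact manifolds. Also, the capillary competitor class must not force $\Omega$ to contain a whole collar of $\Sigma$. Otherwise $Y$ does not meet $\Sigma$ and carries no degree; the constraints should be imposed only near the preimages of the two antipodal points.

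The missing idea is that no band-width estimate is needed at all, because the Wang--Wang--Zhu capillary stability argument only uses $R\geq0$. The paper takes $h\equiv0$ and proceeds as follows.

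\begin{enumerate}
\item It applies the conformal factor $(1+\tau_i\chi)^{4/(n-2)}$, which keeps $R\geq0$ and the boundary metric while making \eqref{eq:trace-fixed} strict.
\item On the compact $M_i=G_{R_i}$ it minimizes $P(\Omega;\mathring M_i)-\int_{\partial^*\Omega\cap\Sigma}\beta_i$, with $\beta_i=\langle\Phi_i,p_-\rangle$ and constraints near $\Phi_i^{-1}(p_\pm)$ in the fixed collar. Contact with $Z_i$ is free, so existence needs no barrier.
\item Comparison with smooth domains inside the fixed collar, where the metrics form a precompact family, gives $P_{g_i}(\Omega_i;\mathring M_i)\leq C$ independently of $i$.
\item The component $Y_i^*$ carrying the nonzero projected degree is connected and meets $\Sigma$. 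If it also reached $Z_i$, the interior monotonicity formula under the uniform bounded geometry would give $\mathcal H^{n-1}(Y_i^*)\geq c\,\dist_{g_i}(\Sigma,Z_i)\to\infty$, a contradiction.
\end{enumerate}

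So $Y_i^*$ is a compact stable capillary minimal hypersurface with boundary only on $\Sigma$. Your Steps~2--3 then go through: Gauss--Bonnet for $n=3$, and Proposition~3.2 of Wang--Wang--Zhu in dimension $n-1$ for $n=4,5$. What must be replaced is the mechanism that keeps the hypersurface away from $Z_R$: use a uniform area bound plus monotonicity, not a weighted barrier.
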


\begin{proof}
Suppose that $G$ has an end, choose $R_i\to\infty$, and set
\[
 M_i=G_{R_i},
 \qquad Z_i=Z_{R_i}.
\]
On the fixed collar of $\Sigma$, choose a smooth function $\chi$ such that
$\chi|_\Sigma=0$ and $\partial_{\bar\nu}\chi>0$ on $\Sigma$.  For each $i$,
choose $\tau_i>0$ sufficiently small and replace
$\widehat g_{R_i}$ by
\[
 g_i=(1+\tau_i\chi)^{4/(n-2)}\widehat g_{R_i}.
\]
The conformal formulas give $R_{g_i}\geq0$, preserve the boundary metric on
$\Sigma$, and make \eqref{eq:trace-fixed} strict.  We also choose $\tau_i$
so that the $C^i$-distance between $g_i$ and $\widehat g_{R_i}$ is at most
$i^{-1}$.  Thus the uniform geometry estimates obtained above remain valid,
the collar metrics form a precompact family in $C^\infty$, and
$\dist_{g_i}(\Sigma,Z_i)\to\infty$.

Set $F=f/\sinh\rho$ and
\[
 \delta_i=\min_\Sigma
 \left(H_\Sigma^{g_i}-|\mathrm dF|_{\tr,g_i}\right)>0.
\]
We follow the capillary dimension-reduction argument in the proof of
\cite[Proposition~3.2]{WWZ}.  By \cite[Lemma~3.1]{WWZ}, a sufficiently
small $C^1$ perturbation gives antipodal points
$p_-,p_+\in\bS^{n-1}$ and smooth maps
$\Phi_i:\Sigma\to\bS^{n-1}$ such that
\begin{equation}\label{eq:relative-flattened-gap}
 \deg\Phi_i=\deg F,
 \qquad
 H_\Sigma^{g_i}\geq|\mathrm d\Phi_i|_{\tr,g_i}+\varepsilon_i,
 \qquad \varepsilon_i>0.
\end{equation}
The maps $\Phi_i$ are constant near their inverse images of $p_-$ and
$p_+$, and the perturbations may be supported in fixed disjoint subsets of
$\Sigma$.

Let $s$ be the polar distance from $p_-$ and set
\[
 \beta_i=\langle\Phi_i,p_-\rangle=\cos(s\circ\Phi_i).
\]
Choose disjoint neighborhoods $\mathcal U_{i,-}$ and
$\mathcal U_{i,+}$ of $\Phi_i^{-1}(p_-)$ and $\Phi_i^{-1}(p_+)$ in the
fixed collar of $\Sigma$.  By \cite[Appendix~B, Claim~1]{WWZ} and
\cite[Steps~4--5 in the proof of Theorem~1.3]{WuCapillary}, the functional
\[
 \mathcal A_i(\Omega)
 =P(\Omega;\mathring M_i)
  -\int_{\partial^*\Omega\cap\Sigma}\beta_i\,\mathrm dA
\]
has a finite-perimeter minimizer $\Omega_i$ among sets satisfying
\[
 \chi_\Omega=1\quad\text{a.e. on }\mathcal U_{i,-},
 \qquad
 \chi_\Omega=0\quad\text{a.e. on }\mathcal U_{i,+}.
\]
The replacement argument in the same references gives neighborhoods
$\mathcal V_{i,\pm}$ such that
\[
 \mathcal U_{i,\pm}\Subset\mathcal V_{i,\pm},
 \qquad
 \chi_{\Omega_i}=1\ \text{a.e. on }\mathcal V_{i,-},
 \qquad
 \chi_{\Omega_i}=0\ \text{a.e. on }\mathcal V_{i,+}.
\]

All the prescribed neighborhoods are contained in the fixed collar.  Since
the pulled-back collar metrics form a precompact family in $C^\infty$, there
are smooth comparison domains $\Omega_i^0$ satisfying
\[
 \begin{aligned}
  \chi_{\Omega_i^0}&=1 &&\text{near }\mathcal U_{i,-},\\
  \chi_{\Omega_i^0}&=0 &&\text{near }\mathcal U_{i,+}.
 \end{aligned}
\]
and
\[
 P_{g_i}(\Omega_i^0;\mathring M_i)
 +\operatorname{Area}_{g_i}(\Sigma)\leq C.
\]
Since $|\beta_i|\leq1$, minimality gives
\begin{equation}\label{eq:relative-area-bound}
 P_{g_i}(\Omega_i;\mathring M_i)\leq C,
\end{equation}
where $C$ is independent of $i$.

Set $Y_i=\overline{\partial^*\Omega_i\cap\mathring M_i}$.  Since $Y_i$ is
disjoint from $\mathcal V_{i,-}\cup\mathcal V_{i,+}$, the angular projection
$P:\bS^{n-1}\setminus\{p_-,p_+\}\to\bS^{n-2}$ is defined on
$\Phi_i(\partial Y_i\cap\Sigma)$.  The relative intersection formula
\cite[Lemma~6.3]{Rade} (see also \cite[Lemma~3.2]{CWXZ}) gives
\[
 \sum_\alpha
 \deg\left(P\circ\Phi_i|_{\partial Y_{i,\alpha}\cap\Sigma}\right)
 =\pm\deg F\ne0,
\]
where $Y_{i,\alpha}$ are the connected components of $Y_i$.  We therefore
choose a component $Y_i^*$ satisfying
\begin{equation}\label{eq:first-slice-degree-relative}
 \deg\left(P\circ\Phi_i|_{\partial Y_i^*\cap\Sigma}\right)\ne0.
\end{equation}
The capillary regularity results
\cite[Theorem~1.1]{ChodoshEdelenLiCapillary} and
\cite[Corollary~1.17]{WangZhang} imply that $Y_i^*$ is smooth near
$\Sigma$.  Along its boundary in $\Sigma$ the contact angle $J$ satisfies
$\cos J=\beta_i$ and is bounded away from $0$ and $\pi$.

It remains to show that $Y_i^*$ is disjoint from $Z_i$.  Put
$r_i(x)=\dist_{g_i}(x,\Sigma)$.  The uniform geometry estimates obtained
above and the interior monotonicity formula give constants $r_0,c_0>0$,
independent of $i$, such that
\[
 \mathcal H^{n-1}(Y_i^*\cap B_{r_0}(x))
 \geq c_0r_0^{n-1}
\]
whenever $B_{2r_0}(x)\Subset\mathring M_i$.  If
$\overline{Y_i^*}\cap Z_i\ne\varnothing$, then connectedness and
\eqref{eq:first-slice-degree-relative} imply that $r_i(Y_i^*)$ contains
every value between $0$ and $\dist_{g_i}(\Sigma,Z_i)$.  We may therefore
choose points of $Y_i^*$ outside the fixed collar, with their $r_i$-values
separated by $4r_0$ and with distance at least $2r_0$ from $Z_i$.  Since
$r_i$ is $1$-Lipschitz, the corresponding $r_0$-balls are disjoint, and the
preceding estimate yields
\[
 \mathcal H^{n-1}(Y_i^*)
 \geq c\,\dist_{g_i}(\Sigma,Z_i).
\]
This contradicts \eqref{eq:relative-area-bound} for large $i$.  Thus
\begin{equation}\label{eq:first-slice-avoids-Z}
 \overline{Y_i^*}\cap Z_i=\varnothing.
\end{equation}

Consequently $Y_i^*$ is a smooth compact stable capillary minimal
hypersurface with boundary in $\Sigma$.  We now apply the first- and
second-variation computation in the proof of
\cite[Proposition~3.2]{WWZ}.  If $\mu$ is the outward unit conormal of
$\partial Y_i^*$ in $Y_i^*$ and $\bar\nu$ is the corresponding unit normal
in $\Sigma$, this computation gives the symmetric bilinear form
\begin{equation}\label{eq:relative-first-slice-form}
 Q_i(\zeta,\xi)=
 \int_{Y_i^*}\left(\langle\nabla\zeta,\nabla\xi\rangle
                    +\frac12R_{Y_i^*}\zeta\xi\right)
 +\int_{\partial Y_i^*}\mathcal B_i\zeta\xi,
\end{equation}
with $Q_i(\zeta,\zeta)\geq0$, where
\[
 \mathcal B_i
 =H_{\partial Y_i^*}-\frac{H_\Sigma^{g_i}}{\sin J}
  +\frac{\langle\nabla s,\mathrm d\Phi_i(\bar\nu)\rangle}{\sin J}.
\]
Moreover, \eqref{eq:relative-flattened-gap} gives
\begin{equation}\label{eq:relative-descended-gap}
 H_{\partial Y_i^*}-\mathcal B_i
 \geq
 \left|\mathrm d\left(P\circ\Phi_i|_{\partial Y_i^*}\right)\right|_{\tr}
 +\frac{\varepsilon_i}{\sin J}.
\end{equation}

If $n=3$, insert $1$ into \eqref{eq:relative-first-slice-form}.  By
Gauss--Bonnet and \eqref{eq:first-slice-degree-relative},
\[
 0\leq2\pi\chi(Y_i^*)
 -\int_{\partial Y_i^*}
  \left|\mathrm d\left(P\circ\Phi_i\right)\right|
 -\varepsilon_i\int_{\partial Y_i^*}\frac1{\sin J}<0,
\]
a contradiction.  If $4\leq n\leq5$, the first eigenfunction of the
associated Robin problem and the conformal change used in the proof of
\cite[Proposition~3.2]{WWZ} give a metric $\widetilde g$ on $Y_i^*$ such
that
\[
 R_{\widetilde g}\geq0,
 \qquad
 H_{\partial Y_i^*}^{\widetilde g}>
 \left|\mathrm d\left(P\circ\Phi_i|_{\partial Y_i^*}\right)
 \right|_{\tr,\widetilde g}.
\]
Since $\dim Y_i^*=n-1\leq4$, \cite[Proposition~3.2]{WWZ} contradicts
\eqref{eq:first-slice-degree-relative}.
\end{proof}

\begin{proof}[Proof of \cref{thm:main} for $3\leq n\leq5$]
By \cref{prop:no-ends}, the Jang graph is compact.  By
the graph--cylinder alternative in \cref{prop:jang}, this forces
$\Gamma_{\mathrm{int}}=\varnothing$; hence the graph is defined by a smooth
function $z$ on all of $M$.

Let $\lambda_1$ be the first eigenvalue of $(L,B)$ on the compact graph.
By \eqref{eq:q-exact}, $\lambda_1\geq0$.  Suppose that $\lambda_1>0$, let
$u>0$ be a first eigenfunction, and set
$\widehat g=u^{4/(n-2)}\bar g$.  Then
\[
 R_{\widehat g}=\lambda_1u^{-4/(n-2)}>0,
 \qquad
 H^{\widehat g}_{\partial M}
 \geq\left|\mathrm d\left(\frac{f}{\sinh\rho}\right)
       \right|_{\tr,\widehat g}.
\]
Choose $\chi\in C^\infty(M)$ supported near $\partial M$ such that
$\chi=0$ and $\partial_{\widehat\nu}\chi>0$ on $\partial M$, where
$\widehat\nu$ is the outward unit normal with respect to $\widehat g$.
For $v_\tau=1+\tau\chi$ and
$g_\tau=v_\tau^{4/(n-2)}\widehat g$, the conformal formulas give
\[
 R_{g_\tau}
 =v_\tau^{-(n+2)/(n-2)}
 \left(R_{\widehat g}+\tau\left(R_{\widehat g}\chi
 -\frac{4(n-1)}{n-2}\Delta_{\widehat g}\chi\right)\right)
\]
and
\[
 g_\tau|_{\partial M}=\widehat g|_{\partial M},
 \qquad
 H^{g_\tau}_{\partial M}
 =H^{\widehat g}_{\partial M}
  +\frac{2(n-1)}{n-2}\tau\partial_{\widehat\nu}\chi
 >H^{\widehat g}_{\partial M}.
\]
For sufficiently small $\tau>0$, $R_{g_\tau}\geq0$.  Wang--Wang--Zhu
\cite[Proposition~3.2]{WWZ}, together with the five-dimensional regularity
of Wang--Zhang \cite[Corollary~1.17]{WangZhang}, then implies that
$\deg f=0$, a contradiction.  Hence $\lambda_1=0$.

Let $u>0$ be a first eigenfunction.  It satisfies $Lu=0$, $Bu=0$, and
\[
 Q(u,u)=\lambda_1\int_Gu^2=0.
\]
Equality in \eqref{eq:q-exact} gives
\begin{equation}\label{eq:jang-equality}
 \nabla u=0,
 \qquad X=0,
 \qquad \mu=0,
 \qquad h=g.
\end{equation}
The scalar identity \eqref{eq:jang-scalar} then gives $R_{\bar g}=0$.

Scale $\bar g$ by $(\sinh\rho)^{-2}$.  Equations
\eqref{eq:outer-fixed} and \eqref{eq:jang-equality} give
\[
 H^{(\sinh\rho)^{-2}\bar g}_{\partial M}\geq n-1.
\]
The map
\[
 \frac{f}{\sinh\rho}:
 (\partial M,(\sinh\rho)^{-2}\bar g|_{\partial M})
 \longrightarrow\bS^{n-1}
\]
has nonzero degree and is distance non-increasing.  By
\hyperref[thm:wwz-fixed]{Theorem~A}, it extends to an isometry from
$(G,(\sinh\rho)^{-2}\bar g)$ onto the Euclidean unit ball.  After
rescaling, $f$ extends to an isometry
\[
 \widehat f:(G,\bar g)\longrightarrow
 \overline{\bB}^n(\sinh\rho).
\]
In particular,
\[
 (\partial M,g|_{\partial M})
 \cong(\bS^{n-1},\sinh^2\!\rho\,g_{\bS^{n-1}}).
\]

The equality $h=g$ gives \eqref{eq:hess-z}.  Since
$\mathrm dw=\mathrm dz$, the function $w-z$ is constant.  Set $U=w$.
Then
\begin{equation}\label{eq:obata-system}
 \Hess_gU=Ug,
 \qquad
 |\nabla U|_g^2=U^2-1.
\end{equation}
Moreover, $z=0$ on the connected boundary, so $U$ is constant on
$\partial M$; write this value as $U_0$.

The first equation in \eqref{eq:obata-system} shows that $U$ has no
interior maximum, while it cannot be constant.  Thus $U\leq U_0$ and its
minimum lies in the interior.  At every critical point,
\eqref{eq:obata-system} gives $U=1$ and $\Hess U=g$.  Hence every critical
point is an isolated nondegenerate minimum and $U_0>1$.  Also
$|\nabla U|=\sqrt{U_0^2-1}>0$ on $\partial M$.

For $1<c<U_0$, put $\Omega_c=\{U\leq c\}$.  If $c-1>0$ is sufficiently
small, $\Omega_c$ is a disjoint union of balls, one about each critical
point.  There are no critical values in $(1,U_0)$, so these sublevel sets
are mutually diffeomorphic.  When $c$ is sufficiently close to $U_0$,
$M\setminus\Omega_c$ is a collar of the connected boundary; hence
$\Omega_c$ is connected.
Consequently $U$ has a unique critical point $o$, and $U(o)=1$.

On $M\setminus\{o\}$ define
\[
 s=\operatorname{arcosh}U,
 \qquad s_0=\operatorname{arcosh}U_0.
\]
Since $\mathrm dU=\sinh s\,\mathrm ds$, equations
\eqref{eq:obata-system} imply
\[
 |\nabla s|=1,
 \qquad
 \Hess s=\coth s\,(g-\mathrm ds^2).
\]
The integral curves of $\nabla s$ are unit-speed geodesics.  The backward
integral curve through $x\ne o$ reaches $o$ and has length $s(x)$, whereas
$|\nabla s|=1$ gives $s(x)\leq d_g(o,x)$.  Hence
$s=d_g(o,\cdot)$.  In particular, $s$ has no cut point before
$\partial M$.

Let $g_s$ denote the induced metric on $\{s=\mathrm{constant}\}$.  The
second equation above gives
\[
 \partial_s g_s=2\coth s\,g_s.
\]
It follows that
\[
 g=\mathrm ds^2+\sinh^2s\,h
\]
for a fixed metric $h$ on
$\bS^{n-1}$, and smoothness at $o$ in geodesic polar coordinates gives
$h=g_{\bS^{n-1}}$.  Therefore
\[
 g=\mathrm ds^2+\sinh^2s\,g_{\bS^{n-1}}.
\]
Since the induced boundary metric is
$\sinh^2s_0\,g_{\bS^{n-1}}$ and was identified above with
$\sinh^2\rho\,g_{\bS^{n-1}}$, we have $s_0=\rho$.  Thus
\[
 (M,g)\cong
 \overline{\bB}^{\bH^n}_\rho.
\]
Finally, $\widehat f|_{\partial M}=f$ is an isometry between the boundary
spheres.  Such an isometry extends to the corresponding hyperbolic balls;
composing this extension with the isometry just obtained gives the required
extension of $f$.
\end{proof}

\begin{proof}[Proof of \cref{thm:listing} for $3\leq n\leq5$]
Since $H_{\partial M}>n-1$, \cref{prop:jang}
applies with $K=g$; denote the function defining the graph by $z$.  Then
$z|_{\partial M}=0$, so $\bar g_{\partial M}=g_{\partial M}$.  The scalar
identity \eqref{eq:jang-scalar}, the quadratic identity \eqref{eq:q-exact},
and the construction of the positive Robin solution remain valid.  The
boundary estimate becomes
\begin{equation}\label{eq:listing-jang}
 \bar H-X(\bar\nu)
 \geq\sqrt{H_{\partial M}^2-(n-1)^2}
 \geq(n-1)|\mathrm dF|.
\end{equation}
The left-hand side is positive because $H_{\partial M}>n-1$.

If the graph has ends, use the conformal metrics
$\widehat g_R=u_R^{4/(n-2)}\bar g$ constructed from
\eqref{eq:relative-eigenfunction-approximation}.  Equations
\eqref{eq:conformal-formulas}, \eqref{eq:conformal-H}, and
\eqref{eq:listing-jang} give
\begin{equation}\label{eq:listing-conformal}
 H^{\widehat g_R}_{\partial M}
 \geq(n-1)|\mathrm dF|_{\widehat g_R}
 \geq|\mathrm dF|_{\tr,\widehat g_R}.
\end{equation}
Choose $\chi$ supported in a collar of $\partial M$ such that
\[
 \chi|_{\partial M}=0,
 \qquad
 \partial_{\widehat\nu_R}\chi>0
 \quad\text{on }\partial M.
\]
For fixed $R$ and sufficiently small $\tau>0$, set
\[
 v_\tau=1+\tau\chi>0,
 \qquad
 g_\tau=v_\tau^{4/(n-2)}\widehat g_R.
\]
The conformal formulas give
\[
 \begin{aligned}
 R_{g_\tau}
 &=v_\tau^{-(n+2)/(n-2)}
   \left(R_{\widehat g_R}v_\tau
         -c_n\tau\Delta_{\widehat g_R}\chi\right)>0,\\
 g_\tau|_{\partial M}
 &=\widehat g_R|_{\partial M}.
 \end{aligned}
\]
Moreover, on $\partial M$,
\[
 \begin{aligned}
 H_{\partial M}^{g_\tau}
 &=H_{\partial M}^{\widehat g_R}
   +\frac{c_n}{2}\tau\partial_{\widehat\nu_R}\chi\\
 &>(n-1)|\mathrm dF|_{g_\tau}
 \geq|\mathrm dF|_{\tr,g_\tau}.
 \end{aligned}
\]
The capillary argument in the proof of \cref{prop:no-ends},
with $F$ replacing $f/\sinh\rho$, now gives $\deg F=0$,
a contradiction. Thus $G$ is compact, and \cref{prop:jang} yields
\[
 G=\operatorname{graph}(z),
 \qquad z\in C^\infty(M).
\]

Let $\lambda_1$ be the first Robin eigenvalue on the compact graph.  If
$\lambda_1>0$, its positive first eigenfunction defines a conformal metric
$\widetilde g$ satisfying
\[
 R_{\widetilde g}>0,
 \qquad
 H^{\widetilde g}_{\partial M}
 \geq(n-1)|\mathrm dF|_{\widetilde g}
 \geq|\mathrm dF|_{\tr,\widetilde g}.
\]
The same conformal perturbation near $\partial M$ makes the last inequality
strict without changing the boundary metric.  This contradicts
\hyperref[thm:wwz-listing]{Theorem~B}.  Hence $\lambda_1=0$, and equality in
\eqref{eq:q-exact} gives
\[
 X=0,
 \qquad \mu=0,
 \qquad h=g.
\]
By \eqref{eq:jang-scalar}, \eqref{eq:listing-jang}, and $X=0$,
\[
 \begin{gathered}
 R_{\bar g}=0,
 \qquad
 H_{\partial M}^{\bar g}
 \geq\sqrt{H_{\partial M}^2-(n-1)^2}>0,\\
 H_{\partial M}^{\bar g}
 \geq(n-1)|\mathrm dF|_{\bar g}.
 \end{gathered}
\]
Thus \hyperref[thm:wwz-listing]{Theorem~B} gives
\[
 \widehat F:(G,c\bar g)\xrightarrow{\cong}\overline{\bB}^n,
 \qquad
 c>0,
 \qquad
 \widehat F|_{\partial M}=F.
\]
Setting $a=c^{-1/2}$, we obtain
\[
 a\widehat F:(G,\bar g)
 \xrightarrow{\cong}\overline{\bB}^n(a),
 \qquad
 (aF)^*g_{\bS^{n-1}(a)}
 =\bar g_{\partial M}=g_{\partial M}.
\]
Moreover, $h=g$ and $z|_{\partial M}=0$ imply, for $U=w$,
\[
 \begin{gathered}
 U=z+U_0,
 \qquad U|_{\partial M}=U_0,\\
 \Hess_gU=Ug,
 \qquad
 |\nabla U|_g^2=U^2-1.
 \end{gathered}
\]
The argument following \eqref{eq:obata-system} therefore yields
\[
 (M,g)\cong
 \overline{\bB}^{\bH^n}_{\operatorname{arsinh}a},
 \qquad
 H_{\partial M}=(n-1)\sqrt{1+a^{-2}},
 \qquad
 |\mathrm dF|=a^{-1}.
\]
Hence equality holds in \eqref{eq:listing-assumption}, and
the boundary isometry $aF$ extends to
\[
 \widetilde F:(M,g)\xrightarrow{\cong}
 \overline{\bB}^{\bH^n}_{\operatorname{arsinh}a},
 \qquad
 \widetilde F|_{\partial M}=aF.
\]
 
\end{proof}

\section{The ball--torus theorem}\label{sec:ball-torus}

We use the following consequence of Brendle--Hung
\cite[Propositions~4.1--4.2]{BrendleHung}.

\begin{lemm}
\label{lem:one-step-weighted-slicing}
Let $(M^d,g)$, $3\leq d\leq7$, be compact, connected, and oriented, let
$\phi\in C^\infty(M)$, and let $\theta:M\to\bS^1$ be smooth.  Suppose that
$\alpha$ is a closed $(d-2)$-form on $\partial M$ such that
\[
 \int_{\partial M}\alpha\wedge
 \theta^*\mathrm{dvol}_{\bS^1}\ne0,
 \qquad
 H_{\partial M}+\langle\nabla\phi,\eta\rangle>0.
\]
Then there is a compact, connected, embedded, oriented hypersurface
$\Sigma\subset M$ that minimizes
$\int_\Sigma e^\phi\,\mathrm dA_g$ in its relative homology class, meets
$\partial M$ orthogonally, and satisfies
\[
 \int_{\partial\Sigma}\alpha\ne0.
\]
Moreover, a positive function $u\in C^\infty(\Sigma)$ and a constant
$\lambda\geq0$ can be chosen so that
\[
\begin{split}
 -\Delta_\Sigma u
 &-\Ric_g(\nu,\nu)u-|A_\Sigma|^2u
 +\Hess_g\phi(\nu,\nu)u\\
 &\qquad-\langle\nabla_\Sigma\phi,\nabla_\Sigma u\rangle
 =\lambda u,
 \qquad
 \partial_\eta u=h_{\partial M}(\nu,\nu)u.
\end{split}
\]
If $\psi=\phi|_\Sigma+\log u$, then
\[
\begin{split}
 -2\Delta_\Sigma\psi-|\nabla_\Sigma\psi|^2+R_\Sigma
 ={}&
 \bigl(-2\Delta_M\phi-|\nabla\phi|^2+R_g\bigr)|_\Sigma\\
 &+|\nabla_\Sigma\log u|^2+|A_\Sigma|^2+2\lambda,
\end{split}
\]
and
\[
 H_{\partial\Sigma}
 +\langle\nabla_\Sigma\psi,\eta\rangle
 =
 H_{\partial M}+\langle\nabla\phi,\eta\rangle.
\]
\end{lemm}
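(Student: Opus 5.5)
The plan has three stages: produce the slice by weighted free-boundary minimization, record its stability through a first eigenfunction, and then transform that stability into the two stated identities.

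\textbf{Step 1: existence of $\Sigma$.} Let $\Theta=\theta^*\mathrm{dvol}_{\bS^1}$. This is a closed $1$-form, and its Poincar\'e--Lefschetz dual gives a class in $H_{d-1}(M,\partial M)$. Concretely, take a regular value $t$ of $\theta$. Then $\theta^{-1}(t)$ is a relative cycle $S_0$, and its boundary $\partial S_0=\theta|_{\partial M}^{-1}(t)$ satisfies
\[
\int_{\partial S_0}\alpha=\int_{\partial M}\alpha\wedge\Theta\ne0 .
\]
We minimize the weighted area $\int e^\phi\,\mathrm dA$ in this relative class among integral currents with boundary in $\partial M$. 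The weighted mean-convexity $H_{\partial M}+\langle\nabla\phi,\eta\rangle>0$ works as a barrier: it stops minimizers from collapsing onto $\partial M$ and gives free-boundary regularity up to the boundary. Together with Gr\"uter--Jost type free-boundary regularity for $d\le7$, this yields a smooth embedded minimizer meeting $\partial M$ orthogonally. This is exactly the content of Brendle--Hung, Propositions~4.1--4.2.

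Since $\alpha$ is closed, $\int_{\partial\Sigma}\alpha$ depends only on the homology class of $\partial\Sigma$ in $\partial M$. That class equals $\partial$ of the relative class, so the integral stays nonzero. The minimizer may be disconnected, with components $\Sigma_k$. As $\sum_k\int_{\partial\Sigma_k}\alpha\ne0$, some component has nonzero integral. That component is itself a minimizer in its own class, because cut-and-paste of components preserves minimality of a sum. We keep it. Orientation is inherited from the class.

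\textbf{Step 2: stability and the eigenfunction.} For weighted area, the second variation of a free-boundary minimizer is
\[
\int_\Sigma e^\phi\Bigl(|\nabla_\Sigma f|^2-\bigl(\Ric(\nu,\nu)+|A|^2-\Hess\phi(\nu,\nu)\bigr)f^2\Bigr)-\int_{\partial\Sigma}e^\phi h_{\partial M}(\nu,\nu)f^2\ge0 .
\]
This quadratic form is self-adjoint in $L^2(e^\phi\,\mathrm dA)$. Its first eigenfunction $u>0$ solves the stated drift equation with Robin condition $\partial_\eta u=h_{\partial M}(\nu,\nu)u$, and has eigenvalue $\lambda\ge0$.

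\textbf{Step 3: the weighted identity.} Put $\psi=\phi+\log u$. We compute $-2\Delta_\Sigma\psi-|\nabla_\Sigma\psi|^2$ using the following ingredients:
\begin{itemize}
\item the Gauss equation $R_\Sigma=R_g-2\Ric(\nu,\nu)+H^2-|A|^2$;
\item the weighted minimality condition $H=-\langle\nabla\phi,\nu\rangle$;
\item the decomposition $\Delta_M\phi=\Delta_\Sigma\phi+\Hess\phi(\nu,\nu)+H\partial_\nu\phi$;
\item the equation $-\Delta_\Sigma u/u=\Ric(\nu,\nu)+|A|^2-\Hess\phi(\nu,\nu)+\langle\nabla\phi,\nabla\log u\rangle+\lambda$.
\end{itemize}
Expanding, the $\Ric$ terms cancel, and the $H^2$ term cancels against $|\partial_\nu\phi|^2$ from $|\nabla\phi|^2=|\nabla_\Sigma\phi|^2+(\partial_\nu\phi)^2$. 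The cross terms in $\nabla\phi\cdot\nabla\log u$ cancel, leaving $|\nabla\log u|^2+|A|^2+2\lambda$.

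\textbf{Step 4: the boundary identity.} Because $\Sigma$ meets $\partial M$ orthogonally, $\eta$ is the common outward normal, and $H_{\partial M}=H_{\partial\Sigma}+h_{\partial M}(\nu,\nu)$. The Robin condition then gives $\langle\nabla_\Sigma\log u,\eta\rangle=h_{\partial M}(\nu,\nu)$. Adding these yields the stated boundary identity.

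\textbf{Main obstacle.} The delicate point is Step 1: boundary regularity and avoiding degeneration into $\partial M$. We rely entirely on Brendle--Hung for this. The remaining steps are bookkeeping.
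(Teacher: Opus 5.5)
Your proposal is correct and takes the paper's route: a regular level set of $\theta$ gives the relative class, a Brendle--Hung free-boundary minimizer lies in that class, you keep a connected component with $\int_{\partial\Sigma}\alpha\ne0$, and you take the first eigenfunction of the weighted free-boundary Jacobi operator. The differences are small. The paper justifies Step~1 by passing to $\widehat g=e^{2\phi/(d-1)}g$, so that $e^\phi\,\mathrm dA_g=\mathrm dA_{\widehat g}$ and the hypothesis becomes $H^{\widehat g}_{\partial M}>0$; Brendle--Hung's Theorem~B.1 then applies directly, whereas your ``barrier'' assertion is the informal version of this. The paper also simply cites Propositions~4.1--4.2 of Brendle--Hung for the eigenfunction and the two identities, which you verify by hand, and your computation checks out. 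Two minor points: take $t$ to be a regular value of $\theta|_{\partial M}$ as well, and note that your level-set identity for $\int_{\partial S_0}\alpha$ holds only up to a nonzero normalizing constant (e.g.\ $\int_{\bS^1}\mathrm{dvol}_{\bS^1}$), which does not affect the nonvanishing.
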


\begin{proof}
Fiber integration gives
\[
 \int_{\partial M}\alpha\wedge\theta^*\mathrm{dvol}_{\bS^1}
 =\int_{\bS^1}
   \left(\int_{\partial(\theta^{-1}(s))}\alpha\right)
   \mathrm{dvol}_{\bS^1}(s).
\]
Sard's theorem therefore allows us to choose $s\in\bS^1$ that is a regular
value of both $\theta$ and $\theta|_{\partial M}$ and satisfies
\[
 \int_{\partial(\theta^{-1}(s))}\alpha\ne0.
\]
Set $\widetilde\Sigma=\theta^{-1}(s)$.
Set $\widehat g=e^{2\phi/(d-1)}g$.  Then
\[
 H_{\partial M}^{\widehat g}
 =e^{-\phi/(d-1)}
  \bigl(H_{\partial M}+\langle\nabla\phi,\eta\rangle\bigr)>0.
\]
Apply the minimizing-current construction in
\cite[Appendix~B, Theorem~B.1]{BrendleHung} to
$(M,\widehat g)$, $\widetilde\Sigma$, and $\alpha$.  In dimension $d\leq7$,
its support is a smooth embedded free-boundary hypersurface.  Choose a
connected component $\Sigma$ such that
$\int_{\partial\Sigma}\alpha\ne0$.  This component is itself minimizing in
its relative homology class.  Indeed, in the minimizing current obtained
there, replacing one copy of this component by a
smaller homologous competitor would strictly decrease the total mass.  Since
$\mathrm dA_{\widehat g}=e^\phi\mathrm dA_g$ on hypersurfaces, $\Sigma$
minimizes the weighted area.

By \cite[Proposition~4.1(vi)]{BrendleHung}, the positive first eigenfunction
of the weighted stability operator has nonnegative eigenvalue and satisfies
the equation and boundary condition in the lemma.
By \cite[Propositions~4.1(ix) and~4.2]{BrendleHung}, the scalar-curvature and
boundary mean-curvature identities in the lemma hold.
\end{proof}

\begin{lemm}\label{lem:weighted-ball}
Let $(M^k,g)$, $2\leq k\leq5$, be compact, connected, and oriented, and let
$\phi\in C^\infty(M)$.  Suppose that $F:\partial M\to\bS^{k-1}$ has nonzero
degree and
\[
 -2\Delta\phi-|\nabla\phi|^2+R_g\geq0,
 \qquad
 H_{\partial M}+\langle\nabla\phi,\eta\rangle\geq k-1,
 \qquad
 |\mathrm dF|\leq1.
\]
Then $\phi$ is constant and $F$ extends to an isometry
\[
 \widetilde F:(M,g)\longrightarrow\overline{\bB}^k.
\]
\end{lemm}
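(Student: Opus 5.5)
The plan is to remove the weight $\phi$ by a conformal change and then apply \hyperref[thm:wwz-fixed]{Theorem~A}, or its proof mechanism, to the resulting metric. For $3\le k\le5$ the natural operator is the weighted conformal Laplacian. With $u=e^{\phi/2}$ (up to the correct exponent) one computes
\[
 -2\Delta\phi-|\nabla\phi|^2=-4u^{-1}\Delta u,
\]
so the hypothesis reads $-4\Delta u+R_gu\ge0$. The boundary condition reads $2\partial_\eta u+H_{\partial M}u\ge(k-1)u$. Since $c_k=4(k-1)/(k-2)>4$, the quadratic form
\[
 Q(\zeta)=\int_M\bigl(c_k|\nabla\zeta|^2+R_g\zeta^2\bigr)+2\int_{\partial M}\bigl(H_{\partial M}-(k-1)\bigr)\zeta^2
\]
dominates $\int(c_k-4)|\nabla(\zeta/u)|^2u^2\ge0$. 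This follows from Picone's identity for $-4\Delta+R_g$ with Robin data $2\partial_\eta+H_{\partial M}-(k-1)$. I would therefore consider the first eigenvalue $\lambda_1$ of $(-c_k\Delta+R_g,\ \tfrac{c_k}{2}\partial_\eta+H_{\partial M}-(k-1))$, which is nonnegative.

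The first case is $\lambda_1>0$. Take the first eigenfunction $w>0$ and set $\widehat g=w^{4/(k-2)}g$. Then $R_{\widehat g}>0$ and $H^{\widehat g}=w^{-2/(k-2)}(k-1)$. The difficulty is that $|\mathrm dF|_{\widehat g}=w^{-2/(k-2)}|\mathrm dF|_g$, so the inequality $H^{\widehat g}\ge (k-1)|\mathrm dF|_{\widehat g}\ge|\mathrm dF|_{\tr,\widehat g}$ scales correctly. This is exactly the hypothesis of \hyperref[thm:wwz-listing]{Theorem~B}, or the strict version in \cite[Proposition~3.2]{WWZ} after the boundary perturbation $1+\tau\chi$ used in the proof of \cref{thm:main}. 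Because the trace norm is what is needed, I would use $|\mathrm dF|_{\tr}\le(k-1)|\mathrm dF|$. The strict inequality then contradicts $\deg F\ne0$, so $\lambda_1=0$.

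The second case is $\lambda_1=0$. The eigenfunction $w$ gives $Q(w)=0$, which forces $w/u$ to be constant, $\nabla(w/u)=0$, and equality in the weighted inequalities. Reading the Picone identity with the $(c_k-4)$ slack, one also gets $\nabla u=0$ at the level of the strict gap $c_k-4>0$. Hence $\phi$ is constant, $R_g\ge0$ and $H_{\partial M}\ge k-1$, and \hyperref[thm:wwz-fixed]{Theorem~A} gives the isometry. More directly, once $\phi$ is shown constant the hypotheses reduce exactly to Theorem~A. So the cleanest route is to prove constancy of $\phi$ first: if $\phi$ were not constant, the slack would give $\lambda_1>0$ and the argument above would yield a contradiction.

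For $k=2$, $R_g=2K_g$. Integrating the interior inequality gives $\int_M 2K\ge\int|\nabla\phi|^2-2\int_{\partial M}\partial_\eta\phi$, while the boundary gives $\int_{\partial M}\kappa\ge L-\int\partial_\eta\phi$. Gauss--Bonnet then yields
\[
 2\pi\chi(M)\ge\tfrac12\int|\nabla\phi|^2+L,
\]
where $L\ge2\pi|\deg F|\ge2\pi$ by $|\mathrm dF|\le1$. Hence $\chi(M)=1$, $\phi$ is constant, and the appendix argument finishes. The main obstacle is the borderline case $\lambda_1=0$ in dimensions $3$ to $5$: one must verify that the Picone slack genuinely forces $\nabla\phi=0$ and not merely $w\propto u$. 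I expect this to come from the strict inequality $c_k>4$, applied to $\zeta=u$, which gives $Q(u)\le$ (weighted form at $u$) $-(c_k-4)\int|\nabla u|^2$.
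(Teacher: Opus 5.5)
Your argument is correct in outline, but for $3\le k\le5$ it takes a longer route than the paper. The paper uses no spectral dichotomy. It sets $\widetilde g=e^{2\phi/(k-1)}g$, for which
\[
 R_{\widetilde g}=e^{-2\phi/(k-1)}\Bigl(-2\Delta\phi-|\nabla\phi|^2+R_g+\tfrac1{k-1}|\nabla\phi|^2\Bigr)\ge0,
 \qquad
 H^{\widetilde g}_{\partial M}=e^{-\phi/(k-1)}\bigl(H_{\partial M}+\langle\nabla\phi,\eta\rangle\bigr)\ge(k-1)|\mathrm dF|_{\widetilde g}.
\]
It then applies Theorem~B to get $(M,c\widetilde g)\cong\overline{\bB}^k$, reads off $\nabla\phi=0$ from $R_{\widetilde g}=0$, and fixes the radius using $H_{\partial M}\ge k-1$ and $|\mathrm dF|\le1$.

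In your notation this is the conformal factor $v^{4/(k-2)}$ with $v=u^{4/c_k}=u^{(k-2)/(k-1)}$. A fixed power of the weight is already a positive supersolution of $\bigl(-c_k\Delta+R_g,\ \tfrac{c_k}{2}\partial_\eta+H_{\partial M}-(k-1)\bigr)$, with
\[
 -c_k\Delta v+R_gv=u^{\frac{4}{c_k}-1}\bigl(-4\Delta u+R_gu\bigr)+\tfrac1{k-1}|\nabla\phi|^2v .
\]
So the gap $c_k>4$ that you extract through Picone appears pointwise as $\frac1{k-1}|\nabla\phi|^2$. With this choice the first eigenfunction and the case split on $\lambda_1$ are unnecessary, and Theorem~B alone does both the exclusion and the rigidity. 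Your route's only advantage is that it mirrors the compact-graph step of Section~3.

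Three details in your write-up need correcting.

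(i) The slack statement in your key step is misstated. Set $W(\zeta)=\int_M(4|\nabla\zeta|^2+R_g\zeta^2)+2\int_{\partial M}(H_{\partial M}-(k-1))\zeta^2$. Your Picone computation gives
\[
 Q(\zeta)=W(\zeta)+(c_k-4)\int_M|\nabla\zeta|^2,
 \qquad W(\zeta)\ge4\int_Mu^2|\nabla(\zeta/u)|^2 .
\]
Hence $Q(w)=0$ forces both $\nabla w=0$ and $w/u$ constant, so $\nabla\phi=0$. The bound $Q\ge(c_k-4)\int u^2|\nabla(\zeta/u)|^2$ only gives $w\propto u$. The inequality $Q(u)\le W(u)-(c_k-4)\int|\nabla u|^2$ has the wrong sign and yields nothing.

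(ii) Theorem~A, as stated, requires $\partial M$ to be connected. The lemma does not assume this; it serves as the base case $m=0$ of \cref{thm:ball-torus}, where $\partial M$ is only nonempty. Once $\phi$ is constant, apply Theorem~B using $H_{\partial M}\ge k-1\ge(k-1)|\mathrm dF|$, then pin the radius as the paper does.

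(iii) For $k=2$ your Gauss--Bonnet estimate is the paper's, though the intermediate boundary term should be $+2\int_{\partial M}\partial_\eta\phi$. However, the appendix argument does not finish the proof. There Chavel--Feldman closes the argument because $K\equiv-1$ lets Gauss--Bonnet determine the area, and this fails for $K\equiv0$. Instead, use the paper's developing-map and winding-number argument, or invoke Theorem~A with $m=2$ now that $M$ is a disk and $\phi$ is constant.
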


\begin{proof}
Suppose first that $k=2$, and write
$\kappa_{\partial M}=H_{\partial M}$ for the geodesic curvature of the
boundary.  The degree and Lipschitz assumptions give
\[
 \operatorname{Length}_g(\partial M)\geq2\pi.
\]
By Gauss--Bonnet,
\begin{align*}
0
&\leq\int_M
 \bigl(-2\Delta\phi-|\nabla\phi|^2+R_g\bigr)\,\mathrm dA_g\\
 &=4\pi\chi(M)
  -2\int_{\partial M}
   \bigl(\kappa_{\partial M}+\langle\nabla\phi,\eta\rangle\bigr)\,\mathrm ds_g
  -\int_M|\nabla\phi|^2\,\mathrm dA_g\\
&\leq4\pi\chi(M)-2\operatorname{Length}_g(\partial M)\leq0.
\end{align*}
Since a connected oriented surface with nonempty boundary has $\chi(M)\leq1$,
equality holds throughout.  Hence $M$ is a disk, $\phi$ is constant,
$R_g=0$, $\kappa_{\partial M}=1$, and
$\operatorname{Length}_g(\partial M)=2\pi$.  Since $M$ is simply connected
and flat, it admits an orientation-preserving local isometry
$D:M\to\mathbb R^2$.  The parametrized plane curve $D|_{\partial M}$ has
constant signed curvature $1$ and length $2\pi$.  Hence its image is a unit
circle and $D|_{\partial M}$ is a diffeomorphism onto that circle.

For $y\notin D(\partial M)$, the degree $\deg(D,M,y)$ is the winding number
of $D|_{\partial M}$ about $y$.  Since $D$ is an orientation-preserving local
diffeomorphism, this degree is the number of points in $D^{-1}(y)$.  It is
$1$ inside the circle and $0$ outside.  Consequently, $D$ is an isometry
from $(M,g)$ onto the Euclidean unit disk.

Finally,
\[
 2\pi\leq2\pi|\deg F|
 \leq\int_{\partial M}|\mathrm dF|\,\mathrm ds_g
 \leq\operatorname{Length}_g(\partial M)=2\pi.
\]
Thus $|\deg F|=1$ and $|\mathrm dF|\equiv1$.  Therefore
$F\circ(D|_{\partial M})^{-1}$ is an isometry of the unit circle and extends
to a Euclidean isometry of the unit disk.  Composing this extension with
$D$ gives the required extension of $F$.

Suppose now that $3\leq k\leq5$.  Define
$\widetilde g=e^{2\phi/(k-1)}g$.  Then
\[
 R_{\widetilde g}
 =e^{-2\phi/(k-1)}
 \left(-2\Delta\phi-|\nabla\phi|^2+R_g
       +\frac1{k-1}|\nabla\phi|^2\right)\geq0,
\]
and
\[
 H_{\partial M}^{\widetilde g}
 =e^{-\phi/(k-1)}
  \bigl(H_{\partial M}+\langle\nabla\phi,\eta\rangle\bigr)
 \geq(k-1)|\mathrm dF|_{\widetilde g}.
\]
\hyperref[thm:wwz-listing]{Theorem~B} yields a constant $c>0$ and an
isometry from $(M,c\widetilde g)$ onto the Euclidean unit ball whose boundary
restriction is $F$.  Since $R_{\widetilde g}=0$, the scalar-curvature formula
gives $\nabla\phi=0$.  Thus $(M,g)$ is a Euclidean ball of radius $r$, and
for some constant $L>0$,
\[
 F^*g_{\bS^{k-1}}=L^2g_{\partial M}.
\]
The boundary inequality gives
$r\leq1$, while
\[
 (Lr)^{k-1}=|\deg F|\geq1,
\]
and $L\leq1$ give $r\geq1$.  Therefore $r=L=1$.
Thus $F$ is a boundary isometry and extends to an isometry from $(M,g)$ onto
the Euclidean unit ball.
\end{proof}

\begin{figure}[H]
\centering
\resizebox{\textwidth}{!}{%
\begin{tikzpicture}[
  slice/.style={draw=blue!45!black,fill=blue!8,rounded corners=2pt,
    minimum height=1.35cm,text width=3.2cm,align=center,inner sep=4pt},
  bottom/.style={draw=green!45!black,fill=green!8,rounded corners=2pt,
    minimum height=1.65cm,text width=4.2cm,align=center,inner sep=4pt}
]
 \node[slice] (M) at (0,0) {$M$\\ dimension $k+m$};
 \node[slice] (S) at (4.6,0) {$\Sigma$\\ dimension $k+m-1$};
 \node (dots) at (8.0,0) {$\cdots$};
 \node[bottom] (B) at (12.3,0)
   {final slice\\ dimension $k$\\ Euclidean $k$-ball};

 \draw[-{Latex[length=2mm]}] (M) --
   node[above=8mm]{slice for $\theta_m$} (S);
 \draw[-{Latex[length=2mm]}] (S) -- (dots);
 \draw[-{Latex[length=2mm]}] (dots) --
   node[above=8mm]{slice for $\theta_1$} (B);
 \draw[dashed,brown!75!black,-{Latex[length=2mm]}]
   (B.south) .. controls +(0,-1.4) and +(0,-1.4) ..
   node[below]{reverse induction through the free-boundary foliations}
   (M.south);
\end{tikzpicture}%
}
\caption{The dimension descent in the proof of
\cref{thm:ball-torus}.  Each free-boundary weighted slice removes one circle
direction and preserves the nonzero boundary integral.  At the end of the
$m$ steps,
\cref{lem:weighted-ball} identifies the final slice with the Euclidean
$k$-ball.  The dashed arrow represents the reverse induction that recovers
equality on the preceding slices.}
\label{fig:ball-torus-descent}
\end{figure}

\begin{proof}[Proof of \cref{thm:ball-torus}]
We use induction on $m$.  The case $m=0$ is
\cref{lem:weighted-ball}.  Suppose $m\geq1$, and write
\[
 \Psi=(\theta_1,\ldots,\theta_m),\qquad
 \Theta_j=\theta_j^*\mathrm{dvol}_{\bS^1},\qquad
 \Omega=F^*\mathrm{dvol}_{\bS^{k-1}}.
\]
The degree hypothesis gives
\[
 \int_{\partial M}
 \Omega\wedge\Theta_1\wedge\cdots\wedge\Theta_m\ne0.
\]
Apply \cref{lem:one-step-weighted-slicing} with
\[
 \alpha=\Omega\wedge\Theta_1\wedge\cdots\wedge\Theta_{m-1},
 \qquad \theta=\theta_m.
\]
We obtain a connected weighted area minimizer $\Sigma\subset M$, a positive
function $u$, and $\lambda\geq0$, with
\begin{equation}\label{eq:first-slice-degree}
 \int_{\partial\Sigma}
 \Omega\wedge\Theta_1\wedge\cdots\wedge\Theta_{m-1}\ne0.
\end{equation}
Define $\psi=\varphi|_\Sigma+\log u$.  The identities in
\cref{lem:one-step-weighted-slicing} imply
\[
 -2\Delta_\Sigma\psi-|\nabla_\Sigma\psi|^2+R_\Sigma\geq0,
 \qquad
 H_{\partial\Sigma}
 +\langle\nabla_\Sigma\psi,\eta\rangle\geq k-1,
 \qquad
 |\mathrm d(F|_{\partial\Sigma})|\leq1.
\]
The induction hypothesis applied to $\Sigma$ and the first $m-1$ circle maps
shows that $\psi$ is constant and that $\Sigma$ is locally isometric to
$\bB^k\times\mathbb T^{m-1}$.  Thus $R_\Sigma=0$, and the exact scalar
identity gives
\[
 -2\Delta_M\varphi-|\nabla\varphi|^2+R_g=0,\quad
 \nabla_\Sigma\log u=0,\quad A_\Sigma=0,\quad\lambda=0
\]
along $\Sigma$.  Hence $u$ is constant and
$\nabla_\Sigma\varphi=0$.  Weighted minimality and $A_\Sigma=0$ give
$\langle\nabla\varphi,\nu\rangle=0$.  Therefore
$\nabla\varphi=0$ on $\Sigma$.  The eigenvalue equation and boundary
condition now give
\[
 \Ric_g(\nu,\nu)=\Hess_g\varphi(\nu,\nu),
 \qquad h_{\partial M}(\nu,\nu)=0.
\]
Thus the weighted Jacobi operator, with its free-boundary condition, is the
Neumann Laplacian.

The weighted free-boundary implicit-function theorem
\cite[Lemma~3.5]{YWang} gives a foliation
$\{\Sigma_t\}_{|t|<\delta}$ by free-boundary hypersurfaces.  Choose smooth
embeddings $\iota_t:\Sigma\to M$ with image $\Sigma_t$, and set
\[
 f_t=\langle\partial_t\iota_t,\nu_t\rangle>0.
\]
The weighted mean curvature
\[
 \mu(t):=H_{\Sigma_t}+\langle\nabla\varphi,\nu_t\rangle
\]
is constant on each slice.  The slices are relatively homologous to
$\Sigma$, so the integral in \eqref{eq:first-slice-degree} is preserved.
The Jacobi equation and its free-boundary condition
\cite[equations~(2)--(3) in the proof of Lemma~3.6]{YWang} give
\[
\begin{split}
 -\Delta_{\Sigma_t}f_t
 &-\Ric_g(\nu_t,\nu_t)f_t-|A_{\Sigma_t}|^2f_t
 +\Hess_g\varphi(\nu_t,\nu_t)f_t\\
 &\qquad-\langle\nabla_{\Sigma_t}\varphi,
                    \nabla_{\Sigma_t}f_t\rangle
 =\mu'(t),
 \qquad
 \partial_\eta f_t=h_{\partial M}(\nu_t,\nu_t)f_t.
\end{split}
\]
For $\psi_t=\varphi|_{\Sigma_t}+\log f_t$, the weighted hypersurface identity
\cite[Proposition~3.2]{YWang}, combined with the preceding Jacobi equation,
gives the Gauss and boundary identities
\begin{align}
 -2\Delta_{\Sigma_t}\psi_t-|\nabla_{\Sigma_t}\psi_t|^2+R_{\Sigma_t}
 ={}&
 \bigl(-2\Delta_M\varphi-|\nabla\varphi|^2+R_g\bigr)|_{\Sigma_t}
 \notag\\
 &+|\nabla_{\Sigma_t}\log f_t|^2+|A_{\Sigma_t}|^2
 +\mu(t)^2+\frac{2\mu'(t)}{f_t},
 \label{eq:foliation-scalar}\\
 H_{\partial\Sigma_t}
 +\langle\nabla_{\Sigma_t}\psi_t,\eta\rangle
 ={}&H_{\partial M}+\langle\nabla\varphi,\eta\rangle.
 \notag
\end{align}

If $\mu'(t_0)>0$, then $\mu'(t_0)/f_{t_0}\geq\tau$ for some $\tau>0$.
The induction hypothesis applied to $(\Sigma_{t_0},\psi_{t_0})$ implies that
$\psi_{t_0}$ is constant and $\Sigma_{t_0}$ is locally flat.  The left-hand
side of \eqref{eq:foliation-scalar} is therefore zero, whereas its right-hand
side is at least $2\tau$.  Hence
$\mu'\leq0$.  Since $\mu(0)=0$, the first variation gives
\[
 \frac{\mathrm d}{\mathrm dt}
 \int_{\Sigma_t}e^\varphi\,\mathrm dA_g
 =\mu(t)\int_{\Sigma_t}e^\varphi f_t\,\mathrm dA_g.
\]
For $t>0$, we have $\mu(t)\leq0$, whereas for $t<0$, we have
$\mu(t)\geq0$.  Integrating the first-variation formula from $0$ to $t$
shows in both cases that the weighted area of $\Sigma_t$ is at most that of
$\Sigma$.  The reverse inequality follows because $\Sigma$ minimizes
weighted area in its relative homology class.  Thus equality holds,
$\mu=\mu'=0$, and every $\Sigma_t$ is also a weighted area minimizer.  The
induction hypothesis implies that $\psi_t$ is constant, and
\eqref{eq:foliation-scalar} gives
$\nabla_{\Sigma_t}f_t=0$ and $A_{\Sigma_t}=0$.  Hence
$\nabla_{\Sigma_t}\varphi=0$; moreover, $\mu(t)=0$ and
$H_{\Sigma_t}=0$ give $\langle\nabla\varphi,\nu_t\rangle=0$.  Therefore
\[
 A_{\Sigma_t}=0,\qquad
 \nabla_{\Sigma_t}f_t=0,\qquad
 \nabla\varphi=0
\]
on every slice.  Reparametrizing the family, we obtain a Riemannian product
neighborhood of $\Sigma$.

We next extend this product structure to all of $M$, following the
continuation argument of \cite[Theorem~3.10]{YWang}.  Reparametrize by signed
distance, write $\Phi(x,s)=\exp_x(s\nu(x))$, and let $I$ be the maximal open
interval containing $0$ such that
\[
 \Phi:\Sigma\times I\longrightarrow M
\]
is a boundary-preserving local isometry and $\varphi\circ\Phi$ is constant.
Set
\[
 \alpha=\Omega\wedge\Theta_1\wedge\cdots\wedge\Theta_{m-1},
 \qquad \widehat g=e^{2\varphi/(N-1)}g.
\]
Suppose that $b=\sup I<\infty$.  Then $\Phi_s$ converges smoothly to an
isometric immersion $\Phi_b:\Sigma\to M$.  Let
$T_s=(\Phi_s)_\#[\![\Sigma]\!]$.  These currents represent the same relative
homology class, and the minimizing property of $\Sigma$ and the area formula
give
\[
 \mathbf M_{\widehat g}(T_0)
 \leq\mathbf M_{\widehat g}(T_b)
 \leq\int_\Sigma J_{\widehat g}\Phi_b\,\mathrm dA_{\widehat g}
 =\mathbf M_{\widehat g}(T_0).
\]
The free-boundary regularity theorem
\cite[Appendix~B, Theorem~B.1]{BrendleHung}, the equality in the area
formula, and the boundary maximum principle show that
\[
 T_b=q[\![\Gamma]\!],\qquad
 \Phi_b:\Sigma\longrightarrow\Gamma
\]
for a connected smooth embedded free-boundary hypersurface $\Gamma$ and an
integer $q\geq1$, and that $\Phi_b$ is an orientation-preserving
$q$-sheeted covering.  In particular,
\[
 0\ne\partial T_b(\alpha)=q\int_{\partial\Gamma}\alpha.
\]
Moreover, $\Gamma$ minimizes $\widehat g$-area in its relative homology
class; otherwise $q$ copies of a smaller competitor would contradict the
minimality of $T_b$.  Thus the induction hypothesis applies to $\Gamma$.
Passing to the limit as $s\uparrow b$ in the geometric identities for
$\Phi_s(\Sigma)$ gives
\[
 A_\Gamma=0,\qquad \nabla\varphi=0,\qquad
 \Ric_g(\nu,\nu)=\Hess_g\varphi(\nu,\nu),\qquad
 h_{\partial M}(\nu,\nu)=0.
\]
The preceding foliation argument, now applied to $\Gamma$, gives a
boundary-preserving product local isometry
\[
 E:\Gamma\times(-\varepsilon,\varepsilon)\longrightarrow M.
\]
Because the covering preserves orientation, its sheets induce the same unit
normal on $\Gamma$.  Geodesic uniqueness therefore gives
\[
 \Phi(x,s)=E(\Phi_b(x),s-b),
\]
near $b$, contradicting the maximality of $I$.  The same argument at the
lower endpoint proves that $I=\mathbb R$.  Since $\Sigma\times\mathbb R$ is
complete, the boundary-preserving local isometry
$\Phi:\Sigma\times\mathbb R\to M$ is a covering map.  It is surjective
because $M$ is connected, and hence $\varphi$ is constant on $M$.

The local isometry from $\Sigma$ to
$\overline{\bB}^k\times\mathbb T^{m-1}$ given by the induction hypothesis
has open image.  Since $\Sigma$ is compact, its image is also compact and
hence closed in the connected target
$\overline{\bB}^k\times\mathbb T^{m-1}$.  It is therefore surjective; being a
proper local diffeomorphism, it is a finite covering.  Hence the universal
cover of $\Sigma$ is isometric to
$\overline{\bB}^k\times\mathbb R^{m-1}$.  Since
$\Phi:\Sigma\times\mathbb R\to M$ is a covering, the universal cover of
$M$ is therefore
\[
 (\widetilde M,\widetilde g)
 \cong(\overline{\bB}^k\times\mathbb R^m,g_{\mathrm{Euc}}).
\]
Let $\pi:\widetilde M\to M$ be the covering map.  Equality on every leaf
and $|\mathrm dF|\leq1$ show that, on $\partial\widetilde M$, $F\circ\pi$
agrees with the spherical projection up to an isometry of
$\bS^{k-1}$.  Hence every covering transformation acts trivially on the
ball factor, and
\[
 (M,g)\cong\overline{\bB}^k\times Q
\]
for a compact flat $m$-manifold $Q$.  Fiber integration of the degree of
$(F,\Psi|_{\partial M})$ gives a map $Q\to\mathbb T^m$ of nonzero degree,
and therefore $b_1(Q)\geq m$.  By Bieberbach theory, this implies that $Q$
has trivial holonomy and hence is a flat torus.  After identifying $Q$ with
$(\mathbb T^m,g_0)$, the product isometry gives $\mathcal I$, whose first
component is the required extension of $F$.
\end{proof}

\appendix

\section{The two-dimensional cases}
\label{sec:two-dimensional}

\begin{proof}[Proof of \cref{thm:main} for $n=2$]
Write $K_g=R_g/2$, and denote the area of $M$ and the length of $\partial M$
by $A$ and $L$, respectively.  Let $\nu$ be the outward unit normal, and let
$\kappa=H_{\partial M}$ denote the geodesic curvature with the same sign
convention.  For $p\in\partial M$, define the inward normal geodesic by
$\gamma_p(t)=\exp_p(-t\nu)$, and let $\gamma_p(\tau(p))$ be its cut
point.  Denote the normal Jacobian by $J(p,t)$.  For $0\leq t<\tau(p)$,
\[
 J_{tt}+K_g(\gamma_p(t))J=0,
 \qquad J(p,0)=1,
 \qquad J_t(p,0)=-\kappa(p).
\]
Let $j_\rho(t)=\sinh(\rho-t)/\sinh\rho$.  For $w=J-j_\rho$,
\[
 w_{tt}-w=-(K_g+1)J\leq0,
 \qquad w(p,0)=0,
 \qquad w_t(p,0)\leq0.
\]
Since
\[
 \bigl(e^t(w_t-w)\bigr)_t=e^t(w_{tt}-w)\leq0,
\]
the initial conditions give $w_t-w\leq0$.  Hence
$(e^{-t}w)_t\leq0$, and therefore $J(p,t)\leq j_\rho(t)$.  The positivity of
$J$ for $0\leq t<\tau(p)$ and
$j_\rho(\rho)=0$ imply $\tau(p)\leq\rho$.  The boundary cut locus has area
zero, so
\[
 A=\int_{\partial M}\int_0^{\tau(p)}J(p,t)\,\md t\,\md s
 \leq L\int_0^\rho j_\rho(t)\,\md t
 =L\tanh\frac{\rho}{2}.
\]
If $d=\deg f$, then
\[
 L\geq\int_{\partial M}|\md f|\,\md s
 \geq2\pi|d|\sinh\rho.
\]
Since $\partial M$ is connected,
$\chi(M)=1-2\,\operatorname{genus}(M)\leq1$.  Gauss--Bonnet gives
\[
 \begin{split}
 2\pi\chi(M)
 &=\int_MK_g\,\md A+\int_{\partial M}\kappa\,\md s\\
 &\geq-A+L\coth\rho
 \geq\frac{L}{\sinh\rho}
 \geq2\pi|d|
 \geq2\pi.
 \end{split}
\]
Thus equality holds throughout.  In particular,
\[
 \operatorname{genus}(M)=0,
 \qquad |d|=1,
 \qquad L=2\pi\sinh\rho,
 \qquad K_g\equiv-1,
 \qquad \kappa\equiv\coth\rho.
\]
Equality also gives
\[
 A=L\tanh\frac{\rho}{2}=2\pi(\cosh\rho-1),
 \qquad L^2=A^2+4\pi A.
\]
Since $M$ is a topological disk and $K_g\equiv-1$, the equality case in
\cite{ChavelFeldman} shows that $(M,g)$ is isometric to
$\overline{\bB}^{\bH^2}_\rho$.  Equality in
$L\geq\int_{\partial M}|\md f|\,\md s$, together with
$\operatorname{Lip}(f)\leq1$, gives $|\md f|\equiv1$.  Since $|d|=1$, the
map $f$ is a boundary isometry and extends to an isometry
$(M,g)\to\overline{\bB}^{\bH^2}_\rho$.
\end{proof}

\begin{proof}[Proof of \cref{thm:listing} for $n=2$]
Again write $K_g=R_g/2$, $A=\operatorname{Area}(M)$, and
$L=\operatorname{Length}(\partial M)$.  Let $\kappa$ denote the geodesic
curvature of $\partial M$, and define
\[
 q=\sqrt{\kappa^2-1}.
\]
For $p\in\partial M$, use the notation $\tau(p)$ and $J(p,t)$ from the proof
of \cref{thm:main}.  Let
\[
 r(p)=\operatorname{arccoth}\kappa(p),
 \qquad
 j_p(t)=\cosh t-\kappa(p)\sinh t.
\]
Here $r(p)=\infty$ when $\kappa(p)=1$.
The comparison argument in the proof of \cref{thm:main} gives
$J(p,t)\leq j_p(t)$ for
$0\leq t<\tau(p)$.  Since $j_p(r(p))=0$, it follows that
$\tau(p)\leq r(p)$.  Therefore
\[
 \begin{split}
 A
 &\leq\int_{\partial M}\int_0^{r(p)}j_p(t)\,\md t\,\md s\\
 &=\int_{\partial M}(\kappa-q)\,\md s.
 \end{split}
\]
Gauss--Bonnet and \eqref{eq:listing-assumption} imply
\[
 \begin{split}
 2\pi\chi(M)
 &\geq-A+\int_{\partial M}\kappa\,\md s
 \geq\int_{\partial M}q\,\md s\\
 &\geq\int_{\partial M}|\md F|\,\md s
 \geq2\pi|\deg F|
 \geq2\pi.
 \end{split}
\]
Since $\partial M$ is connected,
$\chi(M)=1-2\,\operatorname{genus}(M)\leq1$.  Hence equality holds throughout.
Thus $M$ is a topological disk,
\[
 K_g\equiv-1,
 \qquad |\deg F|=1,
 \qquad q=|\md F|,
 \qquad \int_{\partial M}q\,\md s=2\pi.
\]
Moreover, Gauss--Bonnet now gives
$\int_{\partial M}\kappa\,\md s=2\pi+A$.  The function
$x\mapsto\sqrt{x^2-1}$ is strictly concave on $[1,\infty)$.  Jensen's
inequality gives
\[
 \begin{split}
 2\pi
 =\int_{\partial M}q\,\md s
 &\leq L\sqrt{\left(\frac{2\pi+A}{L}\right)^2-1}\\
 &=\sqrt{(2\pi+A)^2-L^2}.
 \end{split}
\]
The Bol--Fiala inequality \cite{ChavelFeldman} gives
\[
 L^2\geq A^2+4\pi A,
\]
with equality only for a hyperbolic geodesic disk.  Consequently
$\sqrt{(2\pi+A)^2-L^2}\leq2\pi$, and equality holds in both inequalities.
It follows that $(M,g)$ is a hyperbolic geodesic disk and $\kappa$ is
constant.  Write its radius as $\operatorname{arsinh}a$.  Then
\[
 \kappa=\sqrt{1+a^{-2}},
 \qquad q=a^{-1}.
\]
Since $q=|\md F|$ and $|\deg F|=1$, the map
$aF:\partial M\to\bS^1(a)$ is a boundary isometry and extends to an
isometry from $(M,g)$ onto
$\overline{\bB}^{\bH^2}_{\operatorname{arsinh}a}$.  Equality holds in
\eqref{eq:listing-assumption}.
\end{proof}

\section{A warped-torus counterexample to hyperbolic stabilization}
\label{app:warped-torus}

\begin{prop}\label{prop:hyperbolic-ball-torus-counterexample}
Let $k\geq2$, $m\geq1$, and $\rho>0$.  Fix a flat metric $\tau$ on
$\mathbb T^m$, and let $h$ be the hyperbolic metric on
$\overline{\bB}^{\bH^k}_\rho$.  On
\[
 X=\overline{\bB}^{\bH^k}_\rho\times\mathbb T^m
\]
there is a smooth metric $G$ such that
\begin{enumerate}[label=\textup{(\roman*)}]
\item $R_G\geq-k(k-1)$;
\item $G=h+\tau$ in a neighborhood of $\partial X$, and hence
      $H_{\partial X}=(k-1)\coth\rho$;
\item for the product projections
\[
 F:\partial X\to\bS^{k-1}(\sinh\rho),
 \qquad \Psi:X\to\mathbb T^m,
\]
the map $(F,\Psi|_{\partial X})$ has degree one with respect to the product
orientations, and $|\mathrm dF|=1$;
\item $G$ is not locally isometric to $h+\tau$.
\end{enumerate}
\end{prop}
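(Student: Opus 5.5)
The plan is to build $G$ as a doubly warped product that equals $h+\tau$ outside a compact subset of the interior. In geodesic polar coordinates about the centre of $\overline{\bB}^{\bH^k}_\rho$, write
\[
 G=\md r^2+a(r)^2g_{\bS^{k-1}}+b(r)^2\tau ,
\]
with $a(r)=\sinh r$ and $b\equiv1$ for $r\ge r_1$, where $0<r_1<\rho$. Items (ii) and (iii) are then automatic. Near $\partial X$ the metric is literally $h+\tau$, so $H_{\partial X}=(k-1)\coth\rho$, $F$ and $\Psi$ are the product projections, the degree is one, and $|\md F|=1$. Smoothness at $r=0$ only requires $a$ odd with $a'(0)=1$ and $b$ even with $b(0)>0$. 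Item (iv) holds as soon as $b$ is nonconstant: then the $\mathbb T^m$-fibres are not totally geodesic, or some mixed sectional curvature differs from that of $h+\tau$.

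Everything therefore reduces to the scalar curvature ODE inequality. Since $\tau$ is flat, the warped-product formula gives
\[
 R_G=-2(k-1)\frac{a''}{a}-(k-1)(k-2)\frac{a'^2-1}{a^2}
 -2m\frac{b''}{b}-2(k-1)m\frac{a'b'}{ab}-m(m-1)\frac{b'^2}{b^2}.
\]
The first two terms equal $-k(k-1)$ when $a=\sinh r$. A first observation rules out the naive choice. If $a=\sinh r$ is kept, then with $u=b^{(m+1)/2}$ the remaining terms equal $-\frac{4m}{m+1}u^{-1}\Delta_hu$. The requirement $R_G\ge-k(k-1)$ then forces $u$ to be $h$-superharmonic, and since $u\equiv1$ near the boundary, the strong minimum principle forces $u\equiv1$. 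So $a$ must also be deformed. The extra freedom comes from the fact that the coupling term $-2(k-1)m\,a'b'/(ab)$ and the fibre terms are governed by $a$ rather than by $\sinh r$.

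I would therefore set $a=\sinh r+\eps\alpha$ and $b=1+\eps\beta$, with $\alpha,\beta$ supported in $[r_0,r_1]$ (and $\alpha$ odd-compatible near $0$, which is automatic if $r_0>0$). Expanding in $\eps$ gives
\[
 R_G+k(k-1)=\eps\bigl(\mathcal L\alpha-2m\,\Delta_h\beta\bigr)+\eps^2\mathcal N(\alpha,\beta)+O(\eps^3),
\]
where $\mathcal L\alpha=-2(k-1)\sinh^{-1}r\,(\alpha''-\alpha)-2(k-1)(k-2)\sinh^{-2}r\,(\cosh r\,\alpha'-\coth r\,\alpha)$ is the linearization of the $a$-part. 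The strategy is:
\begin{enumerate}[label=(\arabic*)]
\item choose $\alpha$ and solve the radial ODE $2m\Delta_h\beta=\mathcal L\alpha$ exactly, so that the first-order term vanishes;
\item then choose $\alpha$ in the solution space so that the second-order term $\mathcal N$ is nonnegative, and strictly positive wherever it does not vanish to higher order.
\end{enumerate}
The main obstacle is that $\beta$ must also be compactly supported. Integrating $\Delta_h\beta$ against $\sinh^{k-1}r$ imposes a moment condition on $\mathcal L\alpha$, and a second condition comes from requiring $\beta'$ to vanish at $r_1$. Both are linear conditions on $\alpha$, so a two-or-three-parameter family of bumps can meet them.

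The genuinely uncertain part is the sign of $\mathcal N$. If the second-order analysis fails to give a sign, I would fall back to a non-perturbative gluing. Pick an explicit interior model with $R\ge-k(k-1)$ and nonconstant $b$, for instance a small region in which $b$ decreases in $r$ so that the coupling term $-2(k-1)m\,a'b'/(ab)$ is positive. Then connect it to $h+\tau$ through a transition annulus in which $a$ is chosen slightly below the hyperbolic comparison function. This borrows positive curvature from the $a$-terms to absorb the $-2m\,b''/b$ contribution where $b$ levels off. The delicate estimate in that approach is exactly the transition region where $b''>0$.
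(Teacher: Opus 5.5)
\paragraph{Gap: the positivity step is unproved, and step (1) throws away the first-order gain.}
Your preliminary reductions are fine; in particular, keeping $a=\sinh r$ does force $b$ to be constant. But the proposal never actually produces $G$. The decisive positivity of $R_G+k(k-1)$ is left unproved, and the strategy you choose for it discards the mechanism that works. In step (1) you make the first-order term vanish identically. You then need the $\varepsilon^2$ term $\mathcal N$ to be pointwise nonnegative, and you give no argument for this; the fallback gluing is only a sketch.

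The missing idea is that the first-order term should be made \emph{nonnegative and nonzero}, not zero. Pairing with the base static potential $\cosh r$ gives
$\int\cosh r\,(\mathcal L\alpha-2m\Delta_h\beta)\,\md V_h=-2km\int\beta\cosh r\,\md V_h$
for compactly supported $\alpha,\beta$. So shrinking the fibres ($\beta\le0$) buys a genuine first-order gain, and your normalization discards it.

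The paper realizes this in closed form. With $q=f^2$, $0\not\equiv f\in C_c^\infty$, it takes $G_\varepsilon=e^{2\varepsilon q/(k-1)}h+e^{-2\varepsilon q/m}\tau$. The exponents are tuned so that the Laplacian terms from base and fibre cancel exactly, leaving
\[
 R_{G_\varepsilon}=e^{-2\varepsilon q/(k-1)}\Bigl[-k(k-1)-\bigl(\tfrac1{k-1}+\tfrac1m\bigr)\varepsilon^2|\nabla_hq|^2\Bigr].
\]
The factor $e^{-2\varepsilon q/(k-1)}\le1$ multiplying the negative constant is the gain, of size $2k\varepsilon q$ to first order. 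Since $|\nabla_hq|^2\le C_0q$, the quadratic loss is at most a constant times $\varepsilon^2q$. This second ingredient is also absent from your plan: near the edge of the support, where gain and error both vanish, the gain must dominate the error. In this family the $\varepsilon^2$ term is $-\varepsilon^2\bigl[\tfrac{2k}{k-1}q^2+(\tfrac1{k-1}+\tfrac1m)|\nabla_hq|^2\bigr]\le0$, which suggests that relying on second order is the wrong bet.

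\paragraph{Smaller points.}
Your normalization $a=\sinh r$ for $r\ge r_1$ excludes even the radial version of the paper's metric. In $G$-polar coordinates that metric has $a(r)=\sinh(r-c)$ near the boundary, with $c=\int_0^\rho(e^{\varepsilon q/(k-1)}-1)\,\md r>0$. This serves (ii)--(iii) equally well, so you should allow such a shift.

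For (iv), nonconstancy of $b$ is not an isometry-invariant criterion. For example, $\md t^2+e^{2t}g_{\R^{k-1}}$ is hyperbolic space written as a warped product with flat, non-totally-geodesic fibres. The clean argument, as in the paper, is $R_G>-k(k-1)$ at some point, e.g.\ where $q$ is maximal.

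Finally, the last term of your $\mathcal L\alpha$ should contain $\sinh r\,\alpha$, not $\coth r\,\alpha$.
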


\begin{proof}
Choose $0\not\equiv f\in C_c^\infty(\bB^{\bH^k}_\rho)$ and define $q=f^2$.
Then $q\geq0$, $q$ vanishes in a boundary collar, and
\[
 |\nabla_hq|^2\leq C_0q,
\]
for some $C_0>0$.  For $\varepsilon>0$, define
\[
 G_\varepsilon
 =e^{2\varepsilon q/(k-1)}h+e^{-2\varepsilon q/m}\tau.
\]
View $G_\varepsilon$ as a warped product with base metric
$e^{2\varepsilon q/(k-1)}h$ and fiber metric
$e^{-2\varepsilon q/m}\tau$.  The warped-product and conformal
scalar-curvature formulas give
\[
 R_{G_\varepsilon}
 =e^{-2\varepsilon q/(k-1)}
 \left[-k(k-1)-\left(\frac1{k-1}+\frac1m\right)
 \varepsilon^2|\nabla_hq|^2\right].
\]
Since $e^s-1\geq s$ for $s\geq0$,
\begin{align*}
 R_{G_\varepsilon}+k(k-1)
 &=e^{-2\varepsilon q/(k-1)}
 \left[k(k-1)\left(e^{2\varepsilon q/(k-1)}-1\right)
       -\left(\frac1{k-1}+\frac1m\right)
        \varepsilon^2|\nabla_hq|^2\right]\\
 &\geq\varepsilon q\,e^{-2\varepsilon q/(k-1)}
       \left[2k-\left(\frac1{k-1}+\frac1m\right)C_0\varepsilon\right].
\end{align*}
Thus $R_{G_\varepsilon}\geq-k(k-1)$ if
$0<\varepsilon\leq
2k\big/\bigl((\frac1{k-1}+\frac1m)C_0\bigr)$.  Since $q$ vanishes near $\partial X$,
properties \textup{(ii)} and \textup{(iii)} follow.  At a point $p$ where
$q$ attains its positive maximum,
\[
 R_{G_\varepsilon}(p)
 =-k(k-1)e^{-2\varepsilon q(p)/(k-1)}>-k(k-1).
\]
Since $R_{h+\tau}\equiv-k(k-1)$, property \textup{(iv)} follows.  Take
$G=G_\varepsilon$.
\end{proof}

\end{document}